\definecolor{Gray}{gray}{0.9}                            
\theoremstyle{plain} 
\newtheorem{proposition}{Proposition}[section] 
\newtheorem{theorem}[proposition]{Theorem} 
\newtheorem{conjecture}[proposition]{Conjecture} 
\newtheorem{lemma}[proposition]{Lemma}
\theoremstyle{definition} 
\newtheorem{definition}[proposition]{Definition} 
\newtheorem{notation}[proposition]{Notation}
\newtheorem{construction}[proposition]{Construction}
\newtheorem{question}[proposition]{Question}
\newtheorem{example}[proposition]{Example} 
\newtheorem{remark}[proposition]{Remark}
\newcommand{\ZZ}{{\mathbb{Z}}} 
\newcommand{\RR}{{\mathbb{R}}}    
\newcommand{\CC}{{\mathbb{C}}}      
\newcommand{\PP}{{\mathbb{P}}}
\newcommand{\Per}{\mathrm{Per}}
\newcommand{\ww}{\mathbf{w}}
\newcommand{\MovE}{\overline{\operatorname{Mov}}^e}
\newcommand{\NefE}{\operatorname{Nef}^e}
\providecommand{\Pic}{\mathop{\rm Pic}}
\DeclareMathOperator{\Amp}{Amp}
\DeclareMathOperator{\Nef}{Nef}
\DeclareMathOperator{\Eff}{Eff}
\DeclareMathOperator{\Bir}{Bir}
\DeclareMathOperator{\Aut}{Aut}
\DeclareMathOperator{\Int}{int}
\DeclareMathOperator{\id}{id}
\DeclareMathOperator{\MovC}{\overline{Mov}}
\DeclareMathOperator{\EffC}{\overline{Eff}}
\DeclareMathOperator{\Mov}{Mov}
\DeclareMathOperator{\PGL}{PGL}
\numberwithin{equation}{section}
\newcommand{\bigsymbol}[1]{%
  \DOTSB
  \mathop{
    \mathchoice{\big@symbol\displaystyle\Large{#1}}
               {\big@symbol\textstyle\large{#1}}
               {\big@symbol\scriptstyle\normalsize{#1}}
               {\big@symbol\scriptscriptstyle\small{#1}}%
    }\slimits@
}
\newcommand{\big@symbol}[3]{%
  \vcenter{%
    \sbox\z@{$#1\sum$}%
    \dimen@=0.875\dimexpr\ht\z@+\dp\z@\relax
    #2
    \hbox{\resizebox{!}{\dimen@}{$\m@th#3$}}%
  }%
  \vphantom{\sum}%
}
\newcommand{\bigast}{\bigsymbol{*}}
\title[Movable cones of complete intersections]{Movable cones of complete intersections of multidegree one on products of projective spaces}
\author[M. Hoff]{Michael Hoff} 
\author[I. Stenger]{Isabel Stenger} 
\author[J.I.  Y\'a\~nez]{José Ignacio Y\'a\~nez}
\email{\href{mailto:hahn@math.uni-sb.de}{hahn@math.uni-sb.de}}
\address{Universit\"at des Saarlandes, Campus E2 4, D-66123 Saarbr\"ucken, Germany} 
\email{\href{mailto:stenger@math.uni-sb.de}{stenger@math.uni-sb.de}} 
\address{Department of Mathematics, University of Utah, 155 South 1400 East, Salt Lake City, UT}
\email{\href{mailto:yanez@math.uni.edu}{yanez@math.utah.edu}}
\date{\today} 
\begin{document}

\begin{abstract} We study Calabi-Yau manifolds which are complete intersections of hypersurfaces of multidegree $1$ in an $m$-fold product of $n$-dimensional projective spaces. 
Using the theory of Coxeter groups, we show that the birational automorphism group of such a Calabi-Yau manifold $X$ is infinite and a free product of copies of $\ZZ$ . Moreover, we give an explicit description of the boundary of the movable cone $\MovC(X)$. 
In the end, we consider examples for the general and non-general case and picture the movable cone and the fundamental domain for the action of $\Bir(X)$.
\end{abstract}

\maketitle

\section{Introduction}

The Kawamata--Morrison cone conjecture predicts that the movable cone $\MovC(X)$ of a Calabi-Yau manifold $X$ is rational polyhedral up to the action of the  birational automorphism group $\Bir(X)$. 
Even though it has been proved in several instances (see \cite{Kaw97}, \cite{Tot10}, \cite{Pen12}, \cite{LP13}, \cite{Og14}, \cite{CO15}, \cite{Y21}, \cite{Wang21}, \cite{HT18}, \cite{LW21}, \cite{ILW22} and references therein), the conjecture remains widely open. We refer to \cite{LOP20} for a good survey on this problem.

The main focus of this article are the following varieties: let $\PP := \PP^n \times \cdots \times \PP^n$ be an $m$-fold product of projective spaces, with $n \geq 1$ and $nm - (n+1) \geq 3$. Consider $X$ to be a general complete intersection subvariety given by the intersection of $n+1$ ample divisors of multidegree $(1,\ldots ,1)$. By generality and adjunction we have that $X$ is a smooth Calabi-Yau variety. Moreover, $X$ has only finitely many minimal models $X=X_0,X_1\ldots, X_m$ up to isomorphism and satisfies the Kawamata-Morrison cone conjecture for the movable cone (see \cite{Wang21}). However, the structures of the movable cone $\MovC(X)$ and the birational automorphism group $\Bir(X)$ are not known.

In this paper,  we show that in contrast to the complete intersection Calabi-Yau manifolds considered in \cite{CO15} and \cite{Y21}, the birational automorphism group is \textbf{not} a Coxeter group in this case and give a complete description of $\Bir(X)$.

\begin{theorem}
Let $\PP =\PP^n \times \cdots \times \PP^n$ be an $m$-fold product and consider $X$ a general complete intersection of $n+1$ hypersurfaces of multidegree $(1,\ldots, 1)$ in $\PP$. 
\begin{itemize}

\item[(1)] The automorphism group $\Aut(X)$ is finite and acts trivially on $N^1(X)_\RR$. If $n \geq 3$ and $m\geq 3$, then $\Aut(X)$ is trivial  for $X$ very general.

\item[(2)] If $n \geq 2$, the birational automorphism group $\Bir(X)$ is isomorphic to the group $\underbrace{\ \ZZ \ast \cdots \ast \ZZ\ }_{{m\choose 2} \text{ terms}}$ up to the finite kernel $\Aut(X)$. 
\end{itemize}
\label{theorem_mainbir}
\end{theorem}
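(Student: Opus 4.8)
The plan is to first pin down the linear-algebraic data on which both statements rest. Since $X$ is a general complete intersection of ample divisors in $\PP$ with $\dim X = nm-(n+1)\ge 3$, the Grothendieck--Lefschetz theorem gives $\Pic(X)\cong\Pic(\PP)=\ZZ^m$, so $N^1(X)_\RR\cong\RR^m$ with basis $H_1,\dots,H_m$, where $H_i=p_i^*\O_{\PP^n}(1)$ is pulled back along the $i$-th projection $p_i\colon X\to\PP^n$. Each $H_i$ is semiample and big but not ample: its complete linear system contracts $X$ onto the determinantal hypersurface $D_i\subset\PP^n$ cut out by the vanishing of the $(n+1)\times(n+1)$ matrix of forms regarded as linear in the $i$-th block of coordinates. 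More generally, the map attached to an interior class of the facet $\langle H_l : l\ne k\rangle$ is the projection $\pi_{\hat k}\colon X\to\prod_{l\ne k}\PP^n$ forgetting the $k$-th factor, which is birational onto its image. I would first verify that $\Nef(X)$ is exactly the simplicial rational polyhedral cone $\langle H_1,\dots,H_m\rangle$ and that every one of these extremal contractions is small (flopping); this requires a careful dimension count on the loci where the corank of the relevant matrix jumps, using the hypotheses $n\ge 2$ and $nm-(n+1)\ge 3$ to guarantee that the contracted loci have codimension $\ge 2$ and that the flopped variety is again a smooth minimal model.

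For part (1), because $X$ is a strict Calabi--Yau manifold, $H^0(X,T_X)=H^{n-1,0}(X)=0$, so $\Aut(X)$ is discrete; since it preserves the integral lattice $N^1(X)_\ZZ$, the ample class $H_1+\dots+H_m$, and the whole chamber structure, its image in $\GL(N^1(X)_\ZZ)$ lies in the stabiliser of the simplicial cone $\Nef(X)$, hence permutes the extremal rays $\{H_i\}$ and injects into $S_m$; combined with the finiteness of automorphisms preserving a polarisation this makes $\Aut(X)$ finite. To see that the action on $N^1(X)_\RR$ is trivial, note that a nontrivial permutation of the $H_i$ would force a linear isomorphism of $\PP$ permuting the defining forms, which a general choice of the $n+1$ multilinear forms does not admit. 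The triviality of $\Aut(X)$ itself for very general $X$ with $n,m\ge 3$ I would then obtain from a monodromy/Noether--Lefschetz argument: an automorphism acts trivially on $N^1$, preserves each $p_i$, and hence is induced by an element of $\prod\PGL_{n+1}$ stabilising the span of the forms, which is trivial for a very general configuration.

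For part (2), the strategy is to realise $\Bir(X)$ through its action on the movable cone. By Wang's theorem the Kawamata--Morrison conjecture holds for $X$, so $\Bir(X)$ acts on $\MovP(X)$ with a rational polyhedral fundamental domain tiled by the nef cones of the finitely many minimal models $X_0=X,X_1,\dots,X_m$, neighbouring chambers being glued by the flops found above. I would show that the kernel of $\Bir(X)\to\GL(N^1(X)_\ZZ)$ is exactly $\Aut(X)$, since a birational map acting trivially on $N^1$ is an isomorphism in codimension one preserving the nef cone, hence a genuine automorphism; this reduces everything to identifying the image. For each pair $(i,j)$ I would construct $g_{ij}\in\Bir(X)$ as a composite of flops forming a loop in the graph whose vertices are the $m+1$ models and whose edges are the flops — equivalently arising from the rank-two sub-structure given by the fibration $X\to\prod_{k\ne i,j}\PP^n$. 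Computing $g_{ij}^*$ explicitly on $\RR^m$ exhibits it as an infinite-order (loxodromic on the rank-two block) element, whose two limiting eigenrays account for the irrational part of $\partial\MovC(X)$ promised in the abstract.

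Finally, to obtain the presentation I would invoke the combinatorics of the tiling: the flop graph on the $m+1$ models is the complete graph $K_{m+1}$, whose first Betti number is $\binom{m+1}{2}-m=\binom{m}{2}$. Via Poincaré's polyhedron theorem — wall-pairings as generators, codimension-two faces as relations — the image of $\Bir(X)$ is generated by the $g_{ij}$, and I would prove it equals the free group $F_{\binom{m}{2}}=\ZZ\ast\dots\ast\ZZ$ by a ping-pong argument using the explicit matrices and disjoint neighbourhoods of the chamber families inside $\MovP(X)$. The main obstacle I anticipate is precisely this last step: showing that the relations forced by the codimension-two faces of the tiling cut the a priori group on the wall-pairings down to exactly the free group of the stated rank — that no further relations survive and that each $g_{ij}$ has infinite order — i.e. establishing that the developed chamber complex is tree-like so that the ping-pong/Bass--Serre input applies. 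Making the determinantal flop geometry explicit enough to pin down the matrices $g_{ij}^*$ and to verify the disjointness required for ping-pong is where the real work lies.
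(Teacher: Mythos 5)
Your skeleton for part (2) --- generators $\psi_{i,j}$ coming from loops of flops in the graph of the $m+1$ minimal models, kernel of $\Bir(X)\to \GL(N^1(X)_\ZZ)$ equal to $\Aut(X)$, and the rank count $\binom{m+1}{2}-m=\binom{m}{2}$ --- matches the paper. But the step that actually proves freeness is exactly the one you defer, and the tools you name do not straightforwardly apply. Poincar\'e's polyhedron theorem extracts relations from cycles of chambers around codimension-two faces, and here the tiling is \emph{not} locally finite along those faces: each $\psi_{i,j}^*$ fixes the ray $\RR_{\geq 0}H_k$ for $k\neq i,j$ and has infinite order, so infinitely many chambers contain that face, and showing these chamber chains never close up \emph{is} the freeness assertion, not an input one can quote. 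Likewise, ping-pong with attracting/repelling eigenrays is only available for $n\geq 3$: by Proposition \ref{prop_diag}, for $n=2$ the matrix $\psi_{i,j}^*$ has $1$ as its only eigenvalue and is not diagonalizable (parabolic), so your proposed ping-pong sets would have to be constructed differently in that case. The paper sidesteps both issues algebraically: Proposition \ref{propBirAutMatrix} gives $\psi_{i,j}^* = t_it_jt_i\cdot\Per_{(i,j)}$, where the $t_k$ are the dual reflection matrices of the rank-$m$ universal Coxeter group with bilinear form $Q_{ij}=-n/2$; Vinberg's faithfulness theorem says the $t_k$ generate $\ZZ/2\ZZ\ast\cdots\ast\ZZ/2\ZZ$, and the word-combinatorics Lemma \ref{lemma_ReducedWord} shows that every freely reduced word in the $\psi_{i,j}^*$ produces a reduced $t$-word of positive length, hence is nontrivial. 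This works uniformly for all $n\geq 2$. Note also that for the generation statement you still need Kawamata's flop-decomposition theorem together with Wang's classification that every flop of each $X_i$ is one of the $\varphi_{i,j}$; without it you cannot conclude that your wall-pairings exhaust $\Bir(X)$ up to $\Aut(X)$.

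For part (1), your finiteness argument and the trivial action on $N^1(X)_\RR$ are essentially the paper's (extend automorphisms to $\Aut(\widehat{\PP}_0)=\PGL_{n+1}(\CC)^m\rtimes \mathcal{S}_m$ and rule out nontrivial factor permutations for general defining forms), and are fine. However, for the very general statement you only assert that an element of $\PGL_{n+1}(\CC)^m$ stabilizing the span of the forms ``is trivial for a very general configuration'': that is the conclusion restated, and nothing in your sketch uses the hypothesis $n\geq 3$, $m\geq 3$. The paper's proof is a concrete dimension count: forms admitting a nontrivial symmetry (up to a finite-order change of basis of the $n+1$ equations) sweep out a countable union of subvarieties of dimension at most $n(n+1)^m-1+(m+1)((n+1)^2-1)$, so the bad locus has codimension $(n+1)^m-(m+1)[(n+1)^2-1]$, which is positive precisely because $n,m\geq 3$; the count fails for small $n$ or $m$ (cf.\ Example \ref{examp_trivialActing} with $n=4$, $m=2$). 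A smaller slip worth fixing: $H_i$ is not big --- the projection $p_i\colon X\to\PP^n$ is a surjective fiber-type contraction for $m\geq 3$ --- and the birational determinantal contractions are attached to the facets $\langle H_l \mid l\neq k\rangle$ of $\Nef(X)$, as you in fact state correctly one sentence later.
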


Recall that Wang showed that a general $X$ has exactly $m+1$ minimal models $X=X_0,X_1, \ldots, X_m$ and that the cone $\Nef^e(X_i)$ for $i=0,\ldots,m$  is a rational polyhedral cone in $N^1(X_i)_\RR \cong \RR^m$ spanned by $m$ extremal rays (see \cite{Wang21}). 
Using results from \cite{Loo14}, Wang concluded that $X$ satisfies the movable cone conjecture. 
We refine this statement by providing an explicit  description of a fundamental domain $\Pi$ for the action of $\Bir(X)$ on the effective movable cone $\MovE(X)$.
\begin{theorem}\label{thm_mainpi}
Let $\PP =\PP^n \times \cdots \times \PP^n$ be an $m$-fold product, and let $X$ be a general complete intersection of $n+1$ hypersurfaces of multidegree $(1,\ldots, 1)$ in $\PP$ with minimal models $X_0 = X, X_1,\ldots,X_m$. Then there exist birational maps $\varphi_{0,i}\colon X \dashrightarrow X_i$ for $i=1,\ldots,m$ such that 
\[
\Pi = \Nef^e(X_0) \cup \bigcup_{i=1,\ldots,m} \varphi_{0,i}^*\Nef^e(X_i) \subset \MovE(X)
\]
is a fundamental domain for the action of $\Bir(X)$ on $\MovE(X)$. 
\end{theorem}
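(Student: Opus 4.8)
The plan is to realize $\Pi$ as a union of $m+1$ nef chambers in the standard wall-and-chamber decomposition of $\MovE(X)$ and to show that these chambers meet each $\Bir(X)$-orbit of chambers exactly once. Recall from \cite{Kaw97} (as used by Wang in \cite{Wang21}) that $\MovE(X)$ is the union of the pulled-back nef cones $f^*\Nef^e(X')$ over all marked minimal models $f\colon X \dashrightarrow X'$, that distinct markings give chambers with disjoint interiors, and that $\Bir(X)$---which for a Calabi--Yau manifold coincides with the group $\operatorname{PsAut}(X)$ of pseudo-automorphisms---permutes these chambers by pullback. First I would define $\varphi_{0,i}\colon X \dashrightarrow X_i$ as the birational map realizing $X_i$ across the $i$-th wall of $\Nef^e(X_0)$ in Wang's explicit description, so that $C_i := \varphi_{0,i}^*\Nef^e(X_i)$ (with $C_0 := \Nef^e(X_0)$) is a genuine chamber of the decomposition adjacent to $C_0$, and set $\Pi = \bigcup_{i=0}^m C_i$.

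The next step is the orbit count. I would show that the map sending a chamber $f^*\Nef^e(X')$ to the isomorphism class of the underlying minimal model $X'$ descends to a bijection between the set of $\Bir(X)$-orbits of chambers and the set of isomorphism classes of minimal models of $X$. The essential point is that if $f,g\colon X \dashrightarrow X'$ are two markings of isomorphic minimal models, then $h := f^{-1}\circ g$ is an isomorphism in codimension one, hence lies in $\Bir(X)$, and $g^*\Nef^e(X') = h^*\bigl(f^*\Nef^e(X')\bigr)$; conversely, $\Bir(X)$-equivalent chambers have isomorphic underlying models. Since Wang shows there are exactly $m+1$ isomorphism classes, realized by the pairwise non-isomorphic $X_0,\ldots,X_m$, there are exactly $m+1$ orbits, and $C_0,\ldots,C_m$ represent them all. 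Transitivity of $\Bir(X)$ within each orbit then yields $\Bir(X)\cdot\Pi = \MovE(X)$.

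To finish I would verify the tiling property. For $g \in \Bir(X)$ the translate $g\Pi = \bigcup_i gC_i$ consists of chambers, and $gC_i$ lies in the same orbit as $C_i$; since the $C_j$ lie in pairwise distinct orbits, $gC_i = C_j$ forces $i=j$ and $g \in \operatorname{Stab}_{\Bir(X)}(C_i)$. The stabilizer of $C_i$ consists of the pseudo-automorphisms of $X$ preserving this chamber; conjugation by $\varphi_{0,i}$ identifies it with $\Aut(X_i)$ acting on $N^1(X_i)_\RR$, which by Theorem~\ref{theorem_mainbir}(1) is finite and acts trivially on $N^1$, so it coincides with the kernel $\Aut(X)$ of the representation $\Bir(X)\to\GL(N^1(X)_\RR)$. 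Consequently, for every $g \in \Bir(X)$ not in this kernel we have $gC_i \neq C_j$ for all $i,j$, and since distinct chambers have disjoint interiors it follows that $\Int(g\Pi)\cap\Int(\Pi)=\emptyset$. Together with the covering property this shows $\Pi$ is a fundamental domain for the induced action of $\Bir(X)/\Aut(X)$, and hence of $\Bir(X)$, on $\MovE(X)$; the local finiteness required by the definition in \cite{Loo14} is inherited from Wang's verification of the cone conjecture.

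The main obstacle I anticipate is not the orbit bookkeeping but the explicit geometric input: pinning down the maps $\varphi_{0,i}$ and checking that $C_1,\ldots,C_m$ are precisely the chambers sharing a wall with $C_0 = \Nef^e(X_0)$, so that $\Pi$ is the concrete region asserted rather than merely an abstract transversal to the orbits. I would treat this by combining Wang's description of $\Nef^e(X_0)$ and its $m$ extremal walls with the free-product presentation of Theorem~\ref{theorem_mainbir}(2): tracking how the $\binom{m}{2}$ generators act on the walls of $\Pi$ should confirm both that the outer walls of $\Pi$ are paired by these generators and that no generator identifies two chambers inside $\Pi$, giving an independent combinatorial confirmation of the covering and tiling properties.
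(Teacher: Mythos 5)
Your proposal is correct, but the mechanism behind the covering property is genuinely different from the paper's. The paper (Proposition \ref{prop_FundamentalDom}) argues constructively: given $E\in\MovE(X)$, it writes $E=\alpha_j^*E_j$ for a marking $\alpha_j\colon X\dashrightarrow X_j$, invokes Kawamata's flop-decomposition theorem together with Wang's result that every flop of an $X_k$ is among the $\varphi_{k,l}$, and then rewrites the resulting chain of flops as $\varphi_{0,j}\circ g$ with $g$ an explicit word in the generators $\psi_{i,j}$ --- the same rewriting that underlies Proposition \ref{prop_BirFree}. You instead count orbits: two markings with isomorphic targets differ by precomposition with an element of $\Bir(X)$, so $\Bir(X)$-orbits of chambers biject with isomorphism classes of minimal models, of which Wang gives exactly $m+1$, pairwise non-isomorphic and represented by $C_0,\ldots,C_m$. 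Your route avoids both Kawamata's theorem and the classification of flops, and it works verbatim for \emph{any} choice of markings $\varphi_{0,i}$, not only the flops across the walls of $\Nef^e(X_0)$ --- which is all the ``there exist'' formulation of the theorem requires; what it gives up is the explicit word in the $\psi_{i,j}$ carrying a given chamber into $\Pi$, which the paper reuses later (e.g.\ in the Tits-cone identification of Proposition \ref{prop_titsiso}), and the adjacency picture of the $C_i$ along the codimension-one faces of $\Nef^e(X_0)$. The disjointness halves are essentially the same in both arguments: an overlap of interiors produces a birational map sending an ample class to an ample class, hence an automorphism, which acts trivially on $N^1$ by Proposition \ref{prop_AutActTrivial} (your stabilizer step applies this to $X_i$, which is legitimate since each $X_i$ is again a general complete intersection of the same type). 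Two ingredients you use should be made explicit: the standard fact that a birational map between minimal models whose pullback preserves ampleness is an isomorphism (this underlies both your orbit bijection and your stabilizer identification), and the pairwise non-isomorphism of the $X_i$, which is genuinely needed --- it fails in the non-general Example \ref{example_nontrivialaut}, where $X_1\cong X_2$ and the fundamental domain correspondingly shrinks.
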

Consequently, the fundamental domain $\Pi$ is the union of $m+1$ polyhedral cones, where each pullback of $\Nef^e(X_i)$ for $i=1,\ldots,m$ is glued to a codimension one face of $\Nef^e(X_0)$. 

 For our last main theorem we recall that a Coxeter group $W$ is, roughly speaking, a group generated by a finite number of reflections, and it can be represented as matrices acting on a real dimensional vector space whose dimension is equal to the number of generators. We can associate a rational polyhedral cone $D$ to the group $W$, the $W$-orbit of $D$ is a convex cone, called the Tits cone, and the cone $D$ is a fundamental domain for the action of $W$ on the Tits cone. In \cite{CO15} and \cite{Y21} they identify $\Bir(X)$ with a Coxeter group $W$, such that under this connection the closure of the Tits cone of $W$ corresponds to $\MovC(X)$ and the fundamental domain $D$ corresponds to the nef cone $\Nef(X)$.
As $\Bir(X)$ is not a Coxeter group in the case of Theorem \ref{theorem_mainbir}, this argument cannot be used directly. Nevertheless, we find that its elements can be related to a Coxeter system $(W,S)$. More precisely, we show that, up to a permutation, any birational automorphism of $X$ can be represented as a product of matrices $t_1,\ldots,t_m$ associated to $(W,S)$ (see Subsection \ref{subsection:CG} for the definitions). 
Using this together with the description of the fundamental domain $\Pi$ from Theorem \ref{thm_mainpi}, we explicitly describe  the boundary of  $\MovC(X)$ following  \cite{Y21}.

\begin{theorem} Let $\PP$ and $X$ be as above.  For $i =1,\ldots,m$, let $H_i$ be the pullback of a hyperplane class of the $i$-th factor $\PP^n$ to $X \subset \PP^n \times \cdots \times \PP^n$. Then the boundary of $\MovC(X)$ is the closure of the union of the $W$-orbits of the cones  $\{ a_\lambda v_\lambda + \sum_{k\neq i,j} a_k H_k \mid a_\lambda \geq 0, a_k\geq 0\}$, where $v_\lambda$ is:
	\begin{itemize}
		\item If $n \geq 3$, an eigenvector associated to the unique eigenvalue $\lambda >1$ of $t_it_j$; or
		\item If $n = 2$, $v_\lambda = 0$.
	\end{itemize}
\label{theorem_mainboundary}
\end{theorem}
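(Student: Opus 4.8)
The plan is to follow the strategy of \cite{Y21}: by Theorem \ref{thm_mainpi} the effective movable cone is tiled by the $\Bir(X)$-translates of the fundamental domain $\Pi$, and I want to identify $\partial\MovC(X)$ with the accumulation locus of this tiling. First I would fix the basis $H_1,\ldots,H_m$ of $N^1(X)_\RR\cong\RR^m$ and write the reflection matrices $t_1,\ldots,t_m$ associated to $(W,S)$ explicitly in this basis, using the description of the facets of $\Nef^e(X_0)$ and the gluing of the petals $\varphi_{0,i}^*\Nef^e(X_i)$ along them. With these matrices in hand, I would compute the action of a single $t_it_j$: from the explicit form of the reflections, the classes $H_k$ for $k\neq i,j$ lie in the common fixed hyperplane of both $t_i$ and $t_j$, so $t_it_j$ restricts to the identity on the codimension-two subspace $\langle H_k : k\neq i,j\rangle$ and acts as a product of two reflections on the complementary plane spanned by the root vectors $\alpha_i,\alpha_j$ associated to $t_i,t_j$.

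The heart of the computation is the eigenvalue analysis of $t_it_j$ on that plane, which is governed by the Coxeter bilinear form. For $n\geq 3$ the restriction of the form to $\langle\alpha_i,\alpha_j\rangle$ is indefinite (Lorentzian), so $t_it_j$ is hyperbolic with eigenvalues $\lambda>1>\lambda^{-1}$, in addition to the eigenvalue $1$ on $\langle H_k\rangle_{k\neq i,j}$; the eigenvector $v_\lambda$ is then a null vector on the boundary of the light cone and is the attracting fixed ray of $(t_it_j)^N$ as $N\to\infty$. For $n=2$ the form degenerates and $t_it_j$ becomes parabolic: its only eigenvalue is $1$, now with a nontrivial Jordan block, there is no $\lambda>1$, and the attracting data collapses. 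This is exactly what the convention $v_\lambda=0$ records, so that the limiting cone reduces to the face $\langle H_k : k\neq i,j\rangle$.

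I would then prove the two inclusions. For the easy direction, the walls of the chambers $(t_it_j)^N\Pi$ accumulate on $\RR_{\geq 0}v_\lambda$ (resp. on the face $\langle H_k\rangle_{k\neq i,j}$ when $n=2$), so these rays are limits of movable classes that separate infinitely many chambers; hence they lie on $\partial\MovC(X)$ and not in its interior. Spanning with the fixed directions $H_k$ produces the asserted cones, and applying elements of $W$ carries them to the remaining boundary facets. The hard direction is to show that the closure of the union of these $W$-orbit cones exhausts $\partial\MovC(X)$, i.e. that no boundary point arises from any other limiting behaviour. Here I would invoke Looijenga's framework \cite{Loo14}, through which Theorem \ref{thm_mainpi} presents $\MovE(X)$ as the $\Bir(X)$-orbit of $\Pi$, and argue via the connection to $(W,S)$ that every sequence of translates escaping to the boundary is, after passing to a subsequence and applying an element of $W$, driven by iterating a single rank-two subgroup $\langle t_i,t_j\rangle$; its accumulation set is then precisely the eigenray $v_\lambda$ (resp. the cusp face), and taking the closure of the union over all pairs $\{i,j\}$ and all of $W$ yields the full boundary.

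The main obstacle is this last step: controlling all the ways a sequence of translates of $\Pi$ can limit to $\partial\MovC(X)$, and matching the group-theoretic limit set of the action on the projectivised cone with the convex-geometric boundary. The free-product structure of $\Bir(X)$ from Theorem \ref{theorem_mainbir} and the explicit Lorentzian/parabolic dichotomy of the generators $t_it_j$ are what make this tractable, but verifying that the degenerate $n=2$ case genuinely produces the lower-dimensional cusp cones, rather than spurious higher-dimensional boundary pieces, will require the most care.
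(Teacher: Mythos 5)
Your skeleton coincides with the paper's: express the birational automorphisms through the Coxeter matrices $t_i$ (Proposition \ref{propFlopMatrix}), identify $\MovE(X)$ with the Tits cone via the fundamental domain $\Pi$ (Propositions \ref{prop_FundamentalDom} and \ref{prop_titsiso}), verify that $(W,S)_Q$ is Lorentzian (Proposition \ref{prop_lorentzian}), and read off the hyperbolic/parabolic dichotomy of $t_it_j$ from the factor $x^2-(n^2-2)x+1$ of its characteristic polynomial (Proposition \ref{prop_diag}); your eigenvalue analysis, the nullity of $v_\lambda$, and the $n=2$ degeneration to the face $\langle H_k : k\neq i,j\rangle$ are all correct and match the paper.

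The genuine gap is in your hard inclusion. The reduction you propose --- that every sequence of translates of $\Pi$ escaping to the boundary is, after a subsequence and a $W$-translation, ``driven by iterating a single rank-two subgroup $\langle t_i,t_j\rangle$,'' with accumulation set exactly the eigenray of $v_\lambda$ --- is false. Since $W$ is the universal Coxeter group on $m$ generators, for $m\geq 3$ there are infinite reduced words that are not eventually periodic, and there are hyperbolic elements (e.g.\ $t_1t_2t_3$) not conjugate into any $\langle t_i,t_j\rangle$; by Theorem \ref{thmInfiniteWord} each such word converges to a well-defined limit direction on the quadric of Remark \ref{rem_quadric}, and that direction is \emph{not} the attracting eigenray of any conjugate of any $t_it_j$. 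Such directions lie on $\partial\MovC(X)$ and are recovered only by the closure in the statement: one must prove that the $W$-orbit of the rays $\RR_{\geq 0}v_\lambda$ is dense in the full limit set (minimality of the $W$-action on the limit set of a Lorentzian system). This density argument is precisely what the proof of \cite[Theorem 4.10]{Y21}, which the paper invokes verbatim in Proposition \ref{prop_movBoundary}, supplies via the Chen--Labb\'e theory \cite{ChenLabbe17}; your sketch replaces it with a combinatorial reduction that does not hold. A smaller point: the relevant external tool is Chen--Labb\'e, not Looijenga \cite{Loo14} --- the latter enters only in Wang's proof of the cone conjecture, not in the boundary description.
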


Another emphasis of this paper is to consider explicit examples. Whereas the movable cone is well described for a Calabi-Yau manifold $X$ with infinite $\Bir(X)$ and Picard number $\rho(X) =2$ (see \cite{LP13}), the structure of the cone $\MovC(X)$ is poorly understood in higher Picard numbers. A class of concrete examples may help to get a better understanding of the general situation and can be used for determining divisors on the boundary of $\MovC(X)$ or $\EffC(X)$ where the numerical dimension behaves interestingly.  

In Section \ref{section_ex} we consider examples for different $n$ and $m$. On the one hand we provide  examples for the general case 
where we picture the movable cone together with its chamber structure by the action of $\Bir(X)$; and by using the description of the boundary from Theorem \ref{theorem_mainboundary} (see Examples \ref{examp_3foldn2m3} and \ref{examp_3foldn3m3} in Subsection \ref{subsection_general}). On the other hand, in Subsection \ref{subsection_nongeneral} we give examples where the automorphism group of $X$ is non-trivial which are not considered in \cite{Wang21}.  In particular, we notice that if the automorphism group does not act trivially on $N^1(X)_\RR$, then the structure of the movable cone is significantly different to the general case. This is shown in Example \ref{example_nontrivialaut}  where we verify that the movable cone conjecture is satisfied in this case and provide a conjectural description of the pseudoeffective cone.

Having an explicit description of both the group of birational automorphisms of $X$ and of the movable cone $\MovC(X)$ has proven to be useful to compute the numerical dimension of divisors on the boundary of $\MovC(X)$ (see for example \cite{L21}, \cite{HS22}, \cite{Y21}). One important detail of these computations is that $\MovC(X) = \EffC(X)$, so the numerical dimension of a divisor on the boundary can be computed by pulling it back to a nef divisor via a birational automorphism of $X$. In the case of Example \ref{example_nontrivialaut}, we have that the pseudoeffective cone is not equal to the movable cone, so this exact method cannot be used. In Proposition \ref{prop_numDim} we compute the numerical dimension of most of the expected boundary of $\EffC(X)$. See also Question \ref{question_numDim} and the discussion preceding it.

\subsection*{Acknowledgments} 
We would like to thank Vladimir Lazi\'c and Christopher Hacon for useful discussions. 
The first two authors were supported by the Deutsche Forschungsgemeinschaft (DFG, German Research Foundation) - Project-ID 286237555 - TRR 195. The third author was partially supported by NSF research grants no:  DMS-1952522, DMS-1801851 and by a grant from the Simons Foundation; Award Number: 256202. 

\section{Preliminaries}	

\subsection{Coxeter groups} \label{subsection:CG}

For a complete introduction to the theory of Coxeter groups we refer to \cite{Humphreys90}. Let $W$ be a finitely generated group and let $S = \{s_i\}_{i=1}^n$ be a finite set of generators. We say that the pair $(W,S)$ is a \emph{Coxeter system} if there are integers $m_{ij} \in \ZZ_{\geq 0} \cup \{\infty\}$ such that \begin{itemize}
\item $W = \langle s_i \mid (s_is_j)^{m_{ij}} = 1\rangle$,
\item $m_{ii} = 1$ for all $i$,
\item $m_{ij} = m_{ji} \geq 2$ or $= \infty$.
\end{itemize}

A group is called a \emph{Coxeter group} if there is a finite subset $S\subset W$ such that $(W,S)$ is a Coxeter system.

Notice that if $m_{ij} = \infty$ for all distinct $i$ and $j$, then \[
W \cong \underbrace{\ZZ/2\ZZ \ast \cdots \ast \ZZ/2\ZZ}_{n \text{ times}}
 \] and is sometimes referred as the \emph{universal Coxeter group of rank $n$.}

To a given Coxeter system $(W,S)$ we can associate a real vector space $V$ of dimension $n = |S|$ with basis $\Delta = \{\alpha_i\}_{i=1}^n$, and a bilinear form $\mathcal{Q}$. To define the bilinear form $\mathcal{Q}$, for each pair $(i,j)$ such that $m_{ij} = \infty$ we choose real numbers $c_{ij}$, with the condition that $c_{ij} = c_{ji} \geq 1$. Then the bilinear form is defined as
\[
\mathcal{Q}(\alpha_i,\alpha_j) = 
\begin{cases}
- \cos \left( \frac{\pi}{m_{ij}} \right) & \text{if } m_{ij} < \infty\\
-c_{ij} & \text{if } m_{ij} = \infty,
\end{cases}
\]
and we can encode the choice of numbers $c_{ij}$ in a matrix $Q$ with entries $Q_{ij} = \mathcal{Q}(\alpha_i,\alpha_j)$. Notice that by definition $m_{ii} = 1$, so $\mathcal{Q}(\alpha_i,\alpha_i) = -\cos \pi = 1$.

For each generator $s_i$ of the Coxeter system $(W,S)$ we can assign to it an element $\tau_i \in GL(V)$ such that 
\[
\tau_i(w) = w - 2\mathcal{Q}(w,\alpha_i) \alpha_i.
\]
for $w \in V$. 
\begin{proposition}
The homomorphism $\rho \colon W \to GL(V)$ defined by $s_i \mapsto \tau_i$ is a faithful geometric representation of the Coxeter group $W$.
\label{prop_faithful}
\end{proposition}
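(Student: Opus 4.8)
The plan is to reproduce the classical construction of the Tits (geometric) representation, as developed in \cite{Humphreys90}; the argument falls naturally into two stages. First I would verify that $\rho$ is a well-defined group homomorphism, and then establish faithfulness, which is the substantive part. For well-definedness, note that since $\mathcal{Q}(\alpha_i,\alpha_i)=1$ we have an orthogonal decomposition $V = \RR\alpha_i \oplus \alpha_i^{\perp}$, where $\alpha_i^{\perp}$ denotes the $\mathcal{Q}$-orthogonal complement of $\alpha_i$. The map $\tau_i$ sends $\alpha_i \mapsto -\alpha_i$ and fixes $\alpha_i^{\perp}$ pointwise, so it is an involution, and a direct computation shows that it preserves $\mathcal{Q}$. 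To see that $\rho$ respects the defining relations of $W$, it suffices to check that $\tau_i\tau_j$ has order exactly $m_{ij}$ for $i \neq j$. The plane $V_{ij} = \RR\alpha_i + \RR\alpha_j$ is $\tau_i\tau_j$-invariant and is complemented by its $\mathcal{Q}$-orthogonal, on which $\tau_i\tau_j$ acts as the identity, so the order is governed by the restriction to $V_{ij}$. There the Gram matrix is $\left(\begin{smallmatrix} 1 & -\cos(\pi/m_{ij}) \\ -\cos(\pi/m_{ij}) & 1 \end{smallmatrix}\right)$: for finite $m_{ij}$ it is positive definite and $\tau_i\tau_j$ is a Euclidean rotation through $2\pi/m_{ij}$, hence of order exactly $m_{ij}$; for $m_{ij}=\infty$ the condition $c_{ij} \geq 1$ makes the form degenerate or indefinite and forces $\tau_i\tau_j$ to have infinite order.

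For faithfulness I would introduce the set of \emph{roots} $\Phi = \{\, w(\alpha_i) : w\in W,\ 1 \leq i \leq n \,\}$ and prove the fundamental dichotomy that every root is either a nonnegative or a nonpositive combination of the simple roots $\alpha_1,\ldots,\alpha_n$, called the \emph{positive} and \emph{negative} roots respectively. This dichotomy is established simultaneously with the length–sign correspondence: for $w \in W$ and a simple reflection $s_i$, one has $\ell(s_i w) > \ell(w)$ if and only if $w^{-1}(\alpha_i)$ is positive. Granting these, faithfulness is immediate: if $\rho(w)=\id$ with $w \neq 1$, pick a reduced expression for $w$ ending in $s_{i_k}$, so that $\ell(w s_{i_k}) < \ell(w)$ and hence $w(\alpha_{i_k})$ is a negative root; but $\rho(w)=\id$ gives $w(\alpha_{i_k}) = \alpha_{i_k}$, a positive root, a contradiction. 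Thus $\ker\rho$ is trivial and $\rho$ is injective.

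The main obstacle is precisely the simultaneous induction on word length that proves the positive/negative dichotomy together with the length–sign correspondence; this is where the combinatorics of the Coxeter relations (via the exchange and deletion conditions) genuinely enters, and it constitutes the technical heart of the argument. By contrast, the remaining ingredients—that each $\tau_i$ is a $\mathcal{Q}$-preserving involution and that the relations are respected—reduce entirely to the elementary two-dimensional computation on the planes $V_{ij}$ carried out above.
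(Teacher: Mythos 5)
Your proposal is correct and takes essentially the same approach as the paper: the paper's proof of this proposition is just a citation to \cite[Section 5.4 Corollary]{Humphreys90} and \cite[Theorem 2(6)]{Vinberg71}, and the argument in those references is precisely what you outline — each $\tau_i$ is a $\mathcal{Q}$-preserving involution, the relations are checked on the planes $V_{ij}$, and faithfulness follows from the positive/negative root dichotomy together with the length–sign correspondence. The one point to keep in view is that the dichotomy induction must be carried out in Vinberg's generality, since the paper allows arbitrary $c_{ij}\geq 1$ (not just $c_{ij}=1$ as in Humphreys), which is exactly the condition your rank-two computation uses to keep the dihedral case working.
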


\begin{proof}
For the case when we choose $c_{ij} = 1$ for $m_{ij} = \infty$, see \cite[Section 5.4 Corollary]{Humphreys90}, and for the general case see \cite[Theorem 2(6)]{Vinberg71}.
\end{proof}

Given that the representation does depend on the choice of $Q$, we keep track of this data by denoting our Coxeter system $(W,S)_Q$.

In this article we are more interested of the action of $W$ on the dual space $V^*$. Define in $V^*$ the convex cone $D \subseteq V^*$ as the intersection of the half-spaces \[ D := \bigcap_{i=1}^n \{ f \in V^* \mid f(\alpha_i) \geq 0 \}. \]

Denote the dual representation of $W$ on $V^*$ as $\rho^*$. Then we definte the \emph{Tits cone $T$} as the $W$-orbit of $D$, i.e,
\[
T = \bigcup_{w \in W} \rho^*(w)(D).
\] 

\begin{proposition}[{\cite[Theorem 2 and Proposition 8]{Vinberg71}}]
The Tits cone $T$ is a convex cone, and $D$ is a fundamental domain for the action of $W$.
\end{proposition}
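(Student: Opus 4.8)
The plan is to establish the two assertions separately. That $T$ is a cone is automatic, since each $\rho^*(w)D$ is the image of the cone $D$ under the linear map $\rho^*(w)$; the content is convexity of $T$ and the fundamental-domain property of $D$. Write $w\cdot f$ for $\rho^*(w)f$, so that $(w\cdot f)(\alpha) = f(\rho(w^{-1})\alpha)$. I first record the fixed-point criterion: $s_i\cdot f = f$ for $f \in V^*$ if and only if $f(\alpha_i) = 0$. Indeed $(s_i\cdot f)(\alpha) = f(\tau_i\alpha) = f(\alpha) - 2\mathcal{Q}(\alpha,\alpha_i)f(\alpha_i)$, and since $\mathcal{Q}(\alpha_i,\alpha_i) = 1$ the correction term vanishes for all $\alpha$ exactly when $f(\alpha_i) = 0$. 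I will also use the standard combinatorics of the root system $\Phi = \{\rho(w)\alpha_i : w \in W,\ 1 \le i \le n\}$: that $\Phi = \Phi^+ \sqcup \Phi^-$, where $\Phi^+ = \Phi \cap \sum_i \RR_{\ge 0}\alpha_i$ and $\Phi^- = -\Phi^+$; that each $\tau_i$ permutes $\Phi^+\setminus\{\alpha_i\}$ while sending $\alpha_i\mapsto-\alpha_i$; and that the inversion set $N(w) = \{\beta\in\Phi^+ : \rho(w^{-1})\beta\in\Phi^-\}$ satisfies $|N(w)| = \ell(w)$. In particular $\ell(s_iw) < \ell(w)$ if and only if $\rho(w^{-1})\alpha_i\in\Phi^-$.

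For the fundamental-domain property it suffices, since $T = \bigcup_w \rho^*(w)D$ by definition, to show that if $f\in D$ and $w\cdot f\in D$ then $w\cdot f = f$; I would prove this by induction on $\ell(w)$. The case $\ell(w) = 0$ is trivial. If $\ell(w)\ge1$, pick $i$ with $\ell(s_iw) < \ell(w)$, so $\rho(w^{-1})\alpha_i = -\sum_j c_j\alpha_j$ with all $c_j\ge0$. Then
\[
(w\cdot f)(\alpha_i) = f(\rho(w^{-1})\alpha_i) = -\sum_j c_j f(\alpha_j) \le 0,
\]
because $f\in D$; but $w\cdot f\in D$ forces $(w\cdot f)(\alpha_i)\ge0$, so $(w\cdot f)(\alpha_i) = 0$. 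By the fixed-point criterion $s_i\cdot(w\cdot f) = w\cdot f$, that is $(s_iw)\cdot f = w\cdot f\in D$, and since $\ell(s_iw)<\ell(w)$ the inductive hypothesis gives $(s_iw)\cdot f = f$. Hence $w\cdot f = f$, as desired.

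For convexity I would avoid tracking line segments across walls directly and instead identify $T$ with the manifestly convex cone
\[
T' := \{\, v\in V^* : v(\beta) < 0 \text{ for only finitely many } \beta\in\Phi^+ \,\}.
\]
Convexity of $T'$ is immediate from convexity of each half-space $\{v : v(\beta)\ge0\}$: for $v,v'\in T'$ and $t\in[0,1]$, the positive roots on which $(1-t)v + tv'$ is negative form a subset of the finite union of the corresponding sets for $v$ and $v'$. The inclusion $T\subseteq T'$ uses the inversion count: if $v = w\cdot f$ with $f\in D$, then $v(\beta) = f(\rho(w^{-1})\beta)\ge0$ whenever $\rho(w^{-1})\beta\in\Phi^+$ (as $f$ pairs non-negatively with every positive root), so $\{\beta\in\Phi^+ : v(\beta)<0\}\subseteq N(w)$ is finite. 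For $T'\subseteq T$, given $v\in T'$ I would choose $w$ minimizing the finite quantity $m(w) = \#\{\beta\in\Phi^+ : (w^{-1}\cdot v)(\beta)<0\}$ and claim $w^{-1}\cdot v\in D$. If not, then $(w^{-1}\cdot v)(\alpha_i)<0$ for some $i$; writing $u = w^{-1}\cdot v$ and using that $\tau_i$ flips $\alpha_i$ and permutes $\Phi^+\setminus\{\alpha_i\}$, one computes $(s_i\cdot u)(\alpha_i) = -u(\alpha_i) > 0$ together with a bijective bookkeeping on $\Phi^+\setminus\{\alpha_i\}$, giving $m(ws_i) = m(w) - 1$ and contradicting minimality. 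Thus $w^{-1}\cdot v\in D$, i.e. $v\in T$, and $T = T'$ is convex.

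The main obstacle is the root-theoretic input I have used as a black box, namely the sign-coherence $\Phi = \Phi^+\sqcup\Phi^-$ and the inversion count $|N(w)| = \ell(w)$ (the latter also underlies finiteness of $m(w)$ for all $w$). For the classical choice $c_{ij} = 1$ these are the standard facts developed in \cite{Humphreys90}, but here $\mathcal{Q}$ is the more general form allowed by the conditions $c_{ij}\ge1$, and verifying sign-coherence of the roots in this generality is exactly the delicate part carried out by Vinberg in \cite{Vinberg71}. Granting these facts, both the fundamental-domain induction and the identification $T = T'$ are routine.
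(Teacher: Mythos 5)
Your proof is correct, but it is worth saying up front that the paper does not prove this statement at all: it is quoted verbatim from Vinberg, with the ``proof'' being the citation to \cite[Theorem 2 and Proposition 8]{Vinberg71}. So your argument is necessarily a different route, namely an actual one. What you wrote is essentially the standard Tits-cone argument (and, in outline, also Vinberg's): the fixed-point criterion $s_i\cdot f=f \Leftrightarrow f(\alpha_i)=0$, the length-induction exchange argument showing that $f,\,w\cdot f\in D$ forces $w\cdot f=f$, and the identification of $T$ with the manifestly convex cone $T'$ of functionals that are negative on only finitely many positive roots, proved by the descent on $m(w)$. I checked the details: the fixed-point criterion, the induction (using $\ell(s_iw)<\ell(w)\Rightarrow \rho(w^{-1})\alpha_i\in\Phi^-$), the finiteness of $m(w)$ via $|N(w)|=\ell(w)$, and the bookkeeping $m(ws_i)=m(w)-1$ using that $\tau_i$ permutes $\Phi^+\setminus\{\alpha_i\}$ are all sound. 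You are also right to flag exactly where the weight of the proof sits: for the generalized form with off-diagonal entries $-c_{ij}\le -1$, the sign-coherence $\Phi=\Phi^+\sqcup\Phi^-$ and the inversion count are not the off-the-shelf statements from \cite{Humphreys90} (which treats $c_{ij}=1$) but are precisely what Vinberg establishes; granting them, the rest is routine. In short: the paper buys brevity by outsourcing everything to Vinberg, while your version makes the mechanism transparent and isolates the dependence on \cite{Vinberg71} to exactly the two root-theoretic facts you black-box.
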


To connect the action of $W$ on $V$ and on $V^*$ we can identify $V^*$ with $V$ as follows. Let $\{\beta_i\}$ be the vectors on $V$ such that $\mathcal{Q}(\alpha_i,\beta_j) = \delta_{ij}$, where $\delta_{ij}$ is the Kronecker Delta. Under this identification, the fundamental domain $D$ is the cone 
\[ 
D = \left\{\sum_{i=1}^n a_i\beta_i \mid a_i \geq 0 \text{ for all } 1 \geq i \geq n \right\},
\]
and the matrices associated to the generators of the dual action of $W$, $\rho^*(s_i)$, with respect to the basis $\{\beta_i\}$ correspond to the transpose of the matrices associated to $\tau_i$  with respect to the basis $\{\alpha_i\}$. We call $t_i$ the matrix representing $\rho^*(s_i)$ with respect to the basis $\{\beta_i\}$. 

\begin{example}
Let $(W,\{s_1,s_2,s_3\})$ be a Coxeter system with $m_{ij} = \infty$ when $i \neq j$. Then $W \cong \ZZ/2\ZZ \ast \ZZ/2\ZZ \ast\ZZ/2\ZZ$. Choose the bilinear form $\mathcal{Q}$ with associated matrix
\[
Q = \begin{pmatrix}
1 & -2 & -2\\
-2 & 1 & -2\\
-2 & -2 & 1
\end{pmatrix}.
\]
Then with respect to the basis $\{\alpha_1,\alpha_2,\alpha_3\}$ the matrices associated to $\tau_1$, $\tau_2$ and $\tau_3$ are
\[
\begin{pmatrix}
-1 & 4 & 4 \\
0 & 1 & 0 \\
0 & 0 & 1
\end{pmatrix} ,
\begin{pmatrix}
1 & 0 & 0 \\
4 & -1 & 4 \\
0 & 0 & 1
\end{pmatrix} ,
\begin{pmatrix}
1 & 0 & 0 \\
0 & 1 & 0 \\
4 & 4 & -1
\end{pmatrix}
\]
respectively. Then the matrices $t_1$, $t_2$ and $t_3$ are 
\[
\begin{pmatrix}
-1 & 0 & 0 \\
4 & 1 & 0 \\
4 & 0 & 1
\end{pmatrix} , 
\begin{pmatrix}
1 & 4 & 0 \\
0 & -1 & 0 \\
0 & 4 & 1
\end{pmatrix} ,
\begin{pmatrix}
1 & 0 & 4 \\
0 & 1 & 4 \\
0 & 0 & -1
\end{pmatrix}.
\]
\end{example}

\begin{remark}\label{rem_AffSpace}
When studying the Tits cone $T$ it is convenient to work on $\PP V$. Choose $V_1$ an affine hyperplane such that each ray $\RR_{>0} \alpha_i$, for $1 \leq i \leq n$, intersects at one point and denote it $\widehat{\alpha_i}$. Let $\varphi$ be the linear form such that $V_1$ is $\{v \in V \mid\varphi(v) = 1 \}$. Define \[ \widehat{v} = \frac{v}{\varphi(v)},\ \text{for }v\in V\setminus \{\varphi(v)=0\}. 
\] These points, along with the hyperplane at infinity, form $\PP V$, and we will refer to points in $\PP V$ as \emph{directions} of $V$.
\end{remark}

To study the boundary of the closure of the Tits cone, we need some additional properties of $W$. We say that the Coxeter system $(W,S)_Q$ is \emph{Lorentzian} if the signature of the matrix $Q$ is $(n-1,1)$. Notice that this property depends on the choice of matrix $Q$ and it is not intrinsic to the Coxeter system $(W,S)$.

The main consequence of this property is that given an infinite reduced word $\ww = s_{k_1}s_{k_2}\cdots$ in $W$, any injective sequence $\{w_i\cdot \widehat{x}\}$, where $w_i = s_{k_1}s_{k_2}\cdots s_{k_i}$, converges to the same direction, regardless of $\widehat{x} \in \PP V$. Here, by \emph{injective sequence} we mean that $w_i \cdot \widehat{x} \neq w_j \cdot \widehat{x}$ as elements in $\PP V$, when $i \neq j$.

\begin{theorem}
Let $\ww$ be an infinite reduced word in a Lorentzian Coxeter system $(W,S)_Q$. Then every injective convergent sequence $\{w_i\cdot \hat{x}\}$ converges to the same unique direction $\widehat{\gamma}(\ww)$.
\label{thmInfiniteWord}
\end{theorem}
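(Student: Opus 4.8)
The plan is to put the Lorentzian geometry attached to $\mathcal{Q}$ to work. Since $Q$ has signature $(n-1,1)$, the form $\mathcal{Q}$ makes $V$ a Lorentzian space: the set $\{v\mid \mathcal{Q}(v,v)<0\}$ has two components, and projectivizing the closure of the ``positive'' component realizes inside $\PP V$ a closed ball $\overline{\mathbb{H}^{n-1}}$ whose interior is hyperbolic space $\mathbb{H}^{n-1}$ and whose boundary $\partial\mathbb{H}^{n-1}$ is the projectivized light cone $\{\mathcal{Q}(v,v)=0\}$. Because each root satisfies $\mathcal{Q}(\alpha_i,\alpha_i)=1>0$, the reflections $\tau_i$, and hence the dual operators $t_i$ under the identification $V\cong V^*$ of the preceding paragraphs, preserve $\mathcal{Q}$ and fix this component, so $W$ acts on $\mathbb{H}^{n-1}$ by isometries and continuously on all of $\PP V$. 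I fix an auxiliary Euclidean metric on $V$, let $\delta$ be the induced spherical metric on $\PP V$ and $d$ the hyperbolic metric on $\mathbb{H}^{n-1}$, and I fix a generic base direction $o$ in the interior of $D$.

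First I would treat directions lying in the fundamental chamber. For $\widehat x\in D$ one has $w_i\cdot\widehat x\in w_iD$, so it suffices to show that the $\delta$-diameter of the chambers $w_i\overline D$ tends to $0$. Here the Lorentzian hypothesis is decisive: since $\ww$ is reduced, $D,w_1D,w_2D,\dots$ is a minimal gallery, the $w_i$ are pairwise distinct, and the walls separating $D$ from $w_iD$ are exactly the $i$ distinct reflecting hyperplanes of the roots $r_l=w_{l-1}(\alpha_{k_l})$ with $1\le l\le i$. Thus $w_i\overline D$ is cut out by more and more half-spaces whose bounding hyperplanes accumulate on $\partial\mathbb{H}^{n-1}$, which in rank-one (hyperbolic) geometry forces $w_i\overline D$ to collapse in the $\delta$-metric onto a single point of the light cone; since consecutive chambers share a wall, the shrinking sets localize to one direction, which I call $\widehat\gamma(\ww)$. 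By discreteness of the geometric representation (Proposition \ref{prop_faithful}) and distinctness of the $w_i$ we have $d(o,w_i\cdot o)\to\infty$, and $\widehat{w_i\cdot o}=w_i\cdot\widehat o\in w_i\overline D$ gives $\widehat{w_i\cdot o}\to\widehat\gamma(\ww)$. In particular $w_i\cdot\widehat x\to\widehat\gamma(\ww)$ for every $\widehat x\in D$, independently of $\widehat x$.

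The second, harder step is to remove the restriction $\widehat x\in D$, for which I would establish a contraction (``north--south'') estimate for the individual isometries $w_i$. Writing each $w_i$ in Cartan form $w_i=k_i\,a_i\,k_i'$ with $k_i,k_i'$ in the compact stabilizer of $o$ and $a_i$ a hyperbolic translation of length $d(o,w_i\cdot o)$, the map $w_i$ sends the complement of an $\varepsilon_i$-neighborhood of a repelling direction $R_i$ into an $\varepsilon_i$-neighborhood of an attracting direction $A_i$, with $\varepsilon_i\to 0$ as $d(o,w_i\cdot o)\to\infty$. Since the attracting direction of $a_i$ approximates $\widehat{w_i\cdot o}$ as the translation length grows, the previous paragraph yields $A_i\to\widehat\gamma(\ww)$; combined with the shrinking of the repelling neighborhoods, any fixed $\widehat x$ is eventually pushed into arbitrarily small neighborhoods of $\widehat\gamma(\ww)$. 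Hence every injective convergent sequence $\{w_i\cdot\widehat x\}$ has limit $\widehat\gamma(\ww)$, independent of $\widehat x$.

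The main obstacle is precisely this contraction estimate. Unlike the classical north--south dynamics of a single hyperbolic isometry, here $w_i$ is a sequence of distinct group elements, so I must show that the attracting data $A_i$ genuinely converge and that the repelling sets $R_i$ do not drift so as to recapture the target direction; controlling both uniformly is where the signature $(n-1,1)$ and the reducedness of $\ww$ must be combined carefully. A convenient way to package the needed facts is the theory of limit roots of infinite Coxeter groups, which shows directly that the normalized roots $\widehat{r_l}$ converge on the light cone; identifying that limit with $\widehat\gamma(\ww)$ would give an alternative route to the same conclusion.
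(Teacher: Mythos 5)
First, on the comparison itself: the paper gives no argument of its own for this theorem --- its proof is a one-line citation of \cite[Theorem 2.5, Corollary 2.6, Theorem 2.8 and Corollary 2.9]{ChenLabbe17}. The ``alternative route'' you gesture at in your closing sentence, namely the theory of limit roots showing that the normalized roots of the walls crossed by $\ww$ converge to a single isotropic direction, is therefore not an alternative: it \emph{is} the content of the cited results and hence of the paper's proof. What you propose instead is a direct geometric argument, and that argument has genuine gaps, one of which you flag yourself.

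Concretely: (a) Your chamber-collapse step is essentially sound, but only inside the closed ball $\overline{\mathbb{H}^{n-1}}$. Discreteness of the geometric representation does give local finiteness of the mirrors, so the distinct walls crossed by $\ww$ leave every compact subset of $\mathbb{H}^{n-1}$, and the half-space beyond a far wall has small visual diameter \emph{inside the ball}. But the theorem concerns arbitrary $\widehat{x}\in\PP V$, and the chamber $D$ itself is hyperideal --- for the paper's form \eqref{eq_matrixQ} with $n,m\geq 3$ the dual generators $\beta_i$ of $D$ are spacelike --- so much of the action happens outside $\overline{\mathbb{H}^{n-1}}$, where the projective half-space beyond a far wall is not small and nothing collapses. (b) The north--south step rests on an incorrect dynamical picture, and its conclusion is false as stated. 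For the projective action on all of $\PP V$ of a Lorentzian element, the repelling locus is not a direction $R_i$ but a projective \emph{hyperplane}, spanned by the repelling isotropic eigendirection and the fixed spacelike part, and a given $\widehat{x}$ cannot in general be kept away from these hyperplanes. Indeed, take $m\geq 3$, $\ww = s_1s_2s_1s_2\cdots$ (reduced since $m_{12}=\infty$), and $\widehat{x}$ in the projectivization of the $\mathcal{Q}$-orthogonal complement of $\mathrm{span}(\alpha_1,\alpha_2)$: every prefix $w_i$ fixes $\widehat{x}$, so the sequence $\{w_i\cdot\widehat{x}\}$ is constant and converges to $\widehat{x}\neq\widehat{\gamma}(\ww)$. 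This is exactly why the theorem restricts attention to \emph{injective} convergent sequences --- a hypothesis your argument never invokes, and without which your claim that ``any fixed $\widehat{x}$ is eventually pushed'' toward $\widehat{\gamma}(\ww)$ cannot hold. (c) Finally, even inside the ball, proving that the attracting directions $A_i$ converge amounts to proving that the normalized walls crossed by $\ww$ converge to a single isotropic direction; that is precisely the Chen--Labb\'e theorem, so as sketched the argument is circular at its key point rather than an independent proof.
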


\begin{proof}
This result follow from \cite[Theorem 2.5, Corollary 2.6, Theorem 2.8 and Corollary 2.9]{ChenLabbe17}.
\end{proof}

\begin{remark}\label{rem_quadric}
It is interesting to notice that by \cite[Theorem 2.5]{ChenLabbe17}, after the identification of the dual space $V^*$ with $V$, all the accumulation points of $\MovC(X)$ lie on the quadratic cone defined by the quadric associated to the inverse of the symmetric matrix $Q$ of the Coxeter system $(W,S)_Q$. 
\end{remark}

\subsection{Calabi-Yau manifolds and main construction} \label{subsection:Construction}

We recall some definitions and results which will be used in this paper.
\begin{definition}
A \emph{Calabi-Yau} manifold $X$ of dimension $n$ is a projective manifold with trivial canonical class $K_X$  and $h^1(X,{\mathcal O}_X) =0$.

\end{definition}

Let $X$ be a projective manifold, and let $N^1(X)$ be the N\'eron–Severi group of $X$. We denote the rank of $N^1(X)$ by $\rho(X)$.  Inside $N^1(X)_\RR = N^1(X) \otimes \RR$ we have
\begin{itemize}
	\item the nef cone $\Nef(X)$, and its interior the ample cone $\Amp(X)$; 
		\item the effective nef cone $\NefE(X) = \Nef(X) \cap \Eff(X)$;
		\item the closure of the cone generated by movable  divisors $\MovC(X)$ and its interior $\Mov(X)$;
	\item the effective cone $\Eff(X)$ with closure $\EffC(X)$, the pseudoeffective cone; and
	\item the effective movable cone $\MovE(X) = \MovC(X)  \cap \Eff(X)$.
\end{itemize}
Furthermore, we denote by $\Aut(X)$ the automorphism group and by $\Bir(X)$ the birational automorphism group. 

The Kawamata--Morrison cone conjecture which connects the convex geometry of the nef cone $\Nef(X)$ and the movable cone $\MovC(X)$ of a Calabi-Yau manifold $X$ with the action of $\Aut(X)$ and  $\Bir(X)$, respectively, can be formulated as follows:
\begin{conjecture}[\cite{Mor93},\cite{Kaw97}] Let $X$ be a Calabi-Yau manifold. 
	\begin{itemize}
		\item[(1)] 	There exists a rational polyhedral cone $\Pi$ which is a fundamental domain for the action of $\Aut(X)$ on $\Nef(X) \cap \Eff(X)$, in the sense that
		\[
		\Nef(X) \cap \Eff(X) = \bigcup_{g\in \Aut(X)} g^* \Pi
		\] 
		with $\Int \Pi  \cap \Int g^* \Pi = \emptyset$ unless $g^* = \id$. 
		\item[(2)] There exists a rational polyhedral cone $\Pi'$ which is a fundamental domain for the action of $\Bir(X)$ on $\MovC(X) \cap \Eff(X)$.
	\end{itemize}
\end{conjecture}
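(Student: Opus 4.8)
The plan is to establish the conjecture not in general---it is open---but for the specific Calabi--Yau manifolds $X$ studied in this paper, by combining Wang's finiteness of minimal models with Looijenga's criterion \cite{Loo14,Wang21} and then upgrading the conclusion using the explicit group-theoretic data of Theorems \ref{theorem_mainbir} and \ref{thm_mainpi}. I would treat the two parts separately. For part (1), the key simplification is Theorem \ref{theorem_mainbir}(1): $\Aut(X)$ is finite and acts trivially on $N^1(X)_\RR$, hence trivially on the cone $\NefE(X) = \Nef(X) \cap \Eff(X)$. The action therefore has a single orbit, and one may take $\Pi = \NefE(X)$ itself. It then only remains to observe that this cone is rational polyhedral, which is exactly Wang's computation that $\NefE(X)$ is spanned by $m$ extremal rays \cite{Wang21}. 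Thus part (1) follows essentially formally from the results already stated.

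Part (2) is the substance. Recall from \cite{Wang21} that $X$ has exactly $m+1$ minimal models $X_0 = X, X_1, \ldots, X_m$ up to isomorphism, each with rational polyhedral $\NefE(X_i) \subset N^1(X_i)_\RR \cong \RR^m$. The structure theory of the movable cone for Calabi--Yau manifolds shows that $\MovE(X)$ is covered by the pullbacks $\psi^{*}\NefE(Y)$ over the marked minimal models $\psi\colon X \dashrightarrow Y$, with two chambers meeting along a wall precisely when the models are related by a flop, and $\Bir(X)$ acting on this chamber complex by pullback. The plan is then to show that every chamber lies in the $\Bir(X)$-orbit of exactly one of the $m+1$ cones $\NefE(X_0), \varphi_{0,1}^{*}\NefE(X_1), \ldots, \varphi_{0,m}^{*}\NefE(X_m)$ forming $\Pi$ in Theorem \ref{thm_mainpi}. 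Granting this, $\Pi$ is a finite union of rational polyhedral cones glued along common walls, hence rational polyhedral, and its $\Bir(X)$-translates cover $\MovE(X)$ with pairwise disjoint interiors---which is the assertion of part (2).

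I expect the orbit-counting statement to be the main obstacle, and it is exactly where the group theory of Theorem \ref{theorem_mainbir} is needed. Identifying $\Bir(X)$ with the free product of $\binom{m}{2}$ copies of $\ZZ$ (up to the finite kernel $\Aut(X)$) and relating its generators to the products $t_it_j$ of the matrices $t_1, \ldots, t_m$ associated to the Coxeter system $(W,S)$ lets one track how reduced words move chambers across $\MovE(X)$. Two verifications are delicate: \emph{covering}, i.e.\ that no chamber is missed, which amounts to checking that the flop graph of $X$ is exactly the combinatorial picture predicted by the free product; and the \emph{fundamental-domain property}, i.e.\ that distinct translates of $\Pi$ have disjoint interiors, for which I would use the faithfulness of the geometric representation (Proposition \ref{prop_faithful}) to exclude nontrivial stabilizers acting on the interior.

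Finally, the accumulation of infinitely many chambers towards $\partial\MovC(X)$ must be controlled: here the Lorentzian structure and the convergence of every infinite reduced word to a single direction (Theorem \ref{thmInfiniteWord}) ensure that the tiling is locally finite in the interior and only degenerates on the boundary, so that the finitely many representatives in $\Pi$ genuinely exhaust the orbit space. This last point is also what links the argument to Theorem \ref{theorem_mainboundary}, since the limiting directions of these reduced words are precisely the rays spanning $\partial\MovC(X)$.
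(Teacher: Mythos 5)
First, a point of framing: the statement you were asked to prove is the Kawamata--Morrison cone conjecture itself, which the paper records as a \emph{conjecture} (citing Morrison and Kawamata) and explicitly describes as widely open; the paper contains no proof of it, and cannot. What the paper does is verify the conjecture for its specific class of complete intersections, by quoting Wang's Theorem \ref{theorem_wang} (whose proof, via Looijenga, is non-constructive) and then making the fundamental domain explicit in Proposition \ref{prop_FundamentalDom}. You correctly recognized this and restricted to that class, and your outline for the restricted statement is essentially the paper's: part (1) follows because the finite group $\Aut(X)$ acts trivially on $N^1(X)_\RR$ (Proposition \ref{prop_AutActTrivial}) and $\Nef^e(X)\cong\Nef(\widehat{\PP}_0)$ is rational polyhedral; part (2) follows from the decomposition of $\MovE(X)$ into pullbacks of nef cones of marked minimal models, the factorization of any marking into the flops $\varphi_{i,j}$ (so that every chamber is a $\Bir(X)$-translate of one of the $m+1$ cones in $\Pi$), and the trivial-action argument to rule out overlapping interiors. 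One caution: invoking Theorem \ref{thm_mainpi} as an input, as you do at the outset, would be circular, since that theorem \emph{is} the fundamental-domain assertion of part (2) for this class; your subsequent chamber-by-chamber argument is the correct substitute, and it is what Proposition \ref{prop_FundamentalDom} actually carries out.

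The one genuine error is the step ``$\Pi$ is a finite union of rational polyhedral cones glued along common walls, hence rational polyhedral.'' That implication is false: a finite union of polyhedral cones glued along facets need not be convex, so it need not be a polyhedral cone at all, and the conjecture as stated demands a rational polyhedral (in particular convex) cone $\Pi'$. The paper is careful here: Theorem \ref{thm_mainpi} and Proposition \ref{prop_FundamentalDom} only claim that the explicit $\Pi$ is a fundamental domain given as a union of $m+1$ polyhedral cones, while the existence of a genuinely rational polyhedral fundamental domain is delegated to Wang's Theorem \ref{theorem_wang}. To repair your argument you should either cite Wang/Looijenga for the polyhedrality statement (your opening sentence already points there, so this is a small fix), or prove convexity of the explicit union directly, which happens to hold in the pictured $m=3$ examples but is nowhere addressed in your proposal. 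Finally, the appeal to the Lorentzian property and Theorem \ref{thmInfiniteWord} to ``control accumulation of chambers'' is not needed for the fundamental-domain property --- the covering and disjointness arguments are purely combinatorial --- and in the paper that machinery enters only later, for the description of $\partial\MovC(X)$ in Theorem \ref{theorem_mainboundary}.
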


Next we explain Wang's construction of Calabi-Yau manifolds as general complete intersections in a product of projective spaces. For more details and  proofs we refer to \cite{Wang21}.
\begin{notation}
We write $\PP = \PP^n_0 \times \PP^n_1 \times \ldots \times \PP^n_m$ for the $(m+1)$-fold product of $n$-dimensional projective spaces and assume that $nm - (n+1) \geq 3$. We denote by $\widehat{\PP}_i$ the $m$-fold product of $\PP^n$s, where the $i$-th factor is missing, and by $\widehat{\PP}_{i,j}$ the product where the $i$-th and the $j$-th factors are missing. If we denote by $H_i$  the pull-back of a hyperplane class in $\PP^n_i$, then $\Pic(\PP)$ is generated by $H_0,\ldots,H_m$ and 
		$$\Eff(\PP) = \Nef(\PP) = \Mov(\PP) = \textup{Cone}(H_0,\ldots,H_m)$$ where the last object is the convex cone in $N^1(\PP)_{\RR} \cong \RR^{m+1}$ generated by 
		$H_0,\ldots, H_m$. 
	\label{notation}
\end{notation}

We consider a general complete intersection $X$ given by $n+1$ hypersurfaces $D_j$ of multidegree $(1,\ldots,1) $ in $\widehat{\PP}_0$.  Then, by generality and adjunction, $X$ is a smooth Calabi-Yau variety of dimension $mn - n - 1.$
By the constraints on $n$ and $m$ we have that $\dim X \geq 3$. 
\begin{remark}
As the general complete intersection of $n$ hypersurfaces of multidegree $(1,\ldots,1)$ in $\widehat{\PP}_0$ is a smooth Fano manifold, by intersecting $n+1$ of such hypersurfaces and obtaining $X$, we know that 
\[ \Nef(X) \cong \Nef(\widehat{\PP}_0)\]
 and that the automorphism group $\Aut(X)$ is finite by \cite[Theorem 3.1]{CO15}.
\end{remark}
\begin{construction}[{\cite[Construction 7.2]{Wang21}}]\label{cons_Xi}

Let $\textbf{x}^i = [x_0^i,\ldots,x_n^i]$ be the homogeneous coordinates of $\PP^n_i$ for $i = 0,\ldots,m$. We denote by 
$$ I   = \{(r_1,\ldots,r_m) \mid 0 \leq r_k \leq n \textup{ for each }k\}.$$
We can write the $n+1$ hypersurfaces $D_j$ defining $X$ as
\[
\sum_{r=(r_1,\ldots,r_m) \in I} a_j^{r} x^1_{r_1} x^2_{r_2}\cdots x^m_{r_m}.
\]	
Moreover, setting 
$$ b^{r_0r_1\ldots r_m} := a_{r_0}^{r}$$ 
for each $r=(r_1,\ldots,r_m) \in I$ and $0 \leq r_0 \leq n$ we can define a $(1,\ldots,1)$-form 
\begin{equation}\label{eq_defF}
F = \sum_{0\leq r_k \leq n, \ 0 \leq k \leq m} b^{r_0r_1\ldots r_m}
	x^0_{r_0}x^1_{r_1} x^2_{r_2}\cdots x^m_{r_m}
\end{equation}
in $\PP = \PP^n_0 \times \PP^n_1 \times \ldots \times \PP^n_m$.
\end{construction}
Now, differentiating $F$ with respect to the $n+1$ variables of a $\PP^n_i$ gives $n+1$ hypersurfaces of multidegree $(1,\ldots,1)$ in $\widehat{\PP}_i$ which define a complete intersection variety $X_i$ with $X_0 = X$. Moreover, if we choose $X$ generically, the $X_i$ are also smooth and define Calabi-Yau varieties by \cite[Section 7]{Wang21} with 
$$\Nef(X_i) \cong \Nef(\widehat{\PP}_i) \cong \Nef(\widehat{\PP}_0) \cong \Nef(X_0). $$

For $i \neq j$, we denote the pullback of the hyperplane class of $\PP^n_j$ to $X_i$ by $H^i_j$. Then $\Nef(X_i)$ is the convex cone generated by the $m$ elements in 
\begin{equation}\label{eq_basisBi}
B_i = \{ H^i_0,\ldots,H^i_{i-1},H^i_{i+1}, \ldots, H^i_{m}\}
\end{equation}
which  form a basis of 
$N^1(X_i)_{\RR}  \cong \RR^m$. 

Two different $X_i$ and $X_j$ can be projected to the space $\widehat{\PP}_{i,j}$ and have the same image there which we denote by $W_{i,j}$. Indeed let $A_{i,j}$ be the $(n+1)\times (n+1)$-matrix obtained from $F$  by first differentiating $F$ with respect to the $n+1$ variables of $\PP^n_i$ and afterwards differentiating these $n+1$ forms with respect to the variables of  $\PP^n_j$.  Then $A_{i,j} = A_{j,i}^t$ and the image of $X_i$ (resp. $X_j$) to $\widehat{\PP}_{i,j}$ is the determinantal hypersurface
$$ W_{i,j} = (\det(A_{i,j}) = 0 ) \subset \widehat{\PP}_{i,j}$$ 
of multidegree $(n+1,\ldots,n+1)$ and dimension 
$$ (m-1)n - 1 = \dim X_i.$$
In the following, we denote the projection from $X_i$ to $W_{i,j} = W_{j,i}$ by $\pi^i_{i,j}$.  The projection $\pi^i_{i,j}$  has connected fibers and is an isomorphism outside the subset of $W_{i,j}$ consisting of points where the rank of $A_{i,j}$ is $\leq n-1$.
Thus, for a general choice of $X$, the maps $\pi^i_{i,j}$ are birational. 
Furthermore, the two projections $\pi^i_{i,j}$ and $\pi^j_{i,j}$ are simultaneously divisorial or small by \cite[Lemma 7.4]{Wang21}.

Finally, for each pair of $i\neq j$ there is an induced birational map $\varphi_{i,j}$:
\[
\begin{xy}
\xymatrix{
	X_i  \ar@{-->}[rr]^{\varphi_{i,j}} \ar[dr]_{\pi^i_{i,j}}& & X_j \ar[dl]^{\pi^j_{i,j}}
	\\
	& W_{i,j} &	
}
\end{xy}
\]
Moreover, if $F$ is chosen generically, then $\varphi_{i,j}$ is not an isomorphism over $W_{i,j}$ (see \cite[Assumption 7.5]{Wang21}) and the projections $\pi^i_{i,j}$ and $\pi^j_{i,j}$ are both small. In this case, $\varphi_{i,j}$ is the flop of $\pi^i_{i,j}$  (see \cite[Lemma 7.6]{Wang21}). The main theorem of \cite{Wang21} concerning this class of Calabi-Yau manifolds is the following theorem. 

\begin{theorem}[{\cite[Theorem 1.3]{Wang21}}]
	Let $X\subset \PP^n_1 \times \ldots \times \PP^n_m$ be a general complete intersection of $n+1$ hypersurfaces of multidegree $(1,\ldots, 1)$, and let $X_i \subset \widehat{\PP}_i$ for $i=1,\ldots,m$ as defined above. Then $X$ has only finitely many minimal models $X_0=X, X_1,\ldots,X_m$ up to isomorphism. Moreover, there exists a rational polyhedral cone $\Pi$  which is a fundamental domain for the action of $\Bir(X)$ on $\MovE(X)$.
	\label{theorem_wang}
\end{theorem}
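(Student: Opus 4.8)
The plan is to establish the two assertions separately, using the explicit models $X_0,\dots,X_m$ built by differentiating $F$, together with the standard machinery of the minimal model program for Calabi--Yau manifolds and Looijenga's results on discrete group actions on convex cones.

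For the finiteness of minimal models I would argue as follows. Each $X_i$ produced from $F$ is a smooth Calabi--Yau manifold with $\Nef(X_i) \cong \Nef(\widehat{\PP}_i)$, a simplicial cone with exactly $m$ facets. For a Calabi--Yau manifold the marked minimal models are in bijection with the maximal chambers of a fan subdividing $\MovC(X)$, where the chamber attached to a marked minimal model $(X',f)$ is $f^*\Nef(X')$, and two chambers share a wall precisely when the corresponding models are related by a flop. It therefore suffices to control the wall-crossings. The key point is that each facet of $\Nef(X_i)$ is the wall of the flopping contraction $\pi^i_{i,j}\colon X_i \to W_{i,j}$, and crossing it yields exactly the flop $\varphi_{i,j}$ to $X_j$. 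Hence every neighbour of $X_i$ in the chamber graph is again one of the $X_j$, so iterating flops from $X_0$ produces no new isomorphism classes. Since the minimal models of a Calabi--Yau manifold are all connected by flops, this shows that $X_0,\dots,X_m$ are, up to isomorphism, all the minimal models, and in particular there are only finitely many.

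For the existence of a rational polyhedral fundamental domain I would invoke Looijenga's theorem. The chambers $f^*\Nef^e(X')$ assemble into a $\Bir(X)$-invariant decomposition of the effective movable cone $\MovE(X)$ into rational polyhedral cones, and by the previous step these fall into only finitely many orbits under $\Bir(X)$, one for each of $X_0,\dots,X_m$. Looijenga's criterion then guarantees that selecting one chamber from each orbit and taking their union yields a rational polyhedral cone $\Pi$ that is a fundamental domain for the action of $\Bir(X)$ on $\MovE(X)$. Concretely one is led to $\Pi = \bigcup_{i=0}^m \varphi_{0,i}^*\Nef^e(X_i)$, glued along the common facets of $\Nef^e(X_0)$, which is the description later refined in Theorem~\ref{thm_mainpi}.

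I expect the main obstacle to be the global control in the finiteness step: verifying that the chamber decomposition actually exhausts all of $\MovE(X)$ and that iterating flops never escapes the finite list $\{X_0,\dots,X_m\}$. This amounts to ruling out minimal models not coming from the construction, and it is here that the explicit determinantal description of the contractions $\pi^i_{i,j}$ through the matrices $A_{i,j}$, together with the genericity of $F$, must be used essentially, both to ensure that the $\pi^i_{i,j}$ are small flopping contractions and to identify the flop across each wall with one of the $\varphi_{i,j}$. Once this geometric bookkeeping is in place, the passage to the fundamental domain via Looijenga's theorem is comparatively formal.
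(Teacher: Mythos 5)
This theorem is not proved in the paper at all: it is imported verbatim from \cite{Wang21}, and the paper's only ``proof'' is the citation. So the relevant comparison is with the strategy the paper attributes to Wang, namely: construct the models $X_i$ by differentiating $F$, show that (for general $F$) each facet of $\Nef(X_i)$ is the wall of a small contraction $\pi^i_{i,j}$ whose flop is $\varphi_{i,j}\colon X_i \dashrightarrow X_j$, conclude finiteness of minimal models from Kawamata's flop-connectedness \cite{Kaw08}, and then obtain a rational polyhedral fundamental domain from Looijenga \cite{Loo14}. Your outline is exactly this route, and you correctly locate the hard geometric input (genericity guaranteeing that the $\pi^i_{i,j}$ are small and that every flop of an $X_i$ is one of the $\varphi_{i,j}$), which corresponds to \cite[Assumption 7.5, Lemma 7.6, Corollary 7.7]{Wang21}.

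One caveat on your last step: Looijenga's criterion does \emph{not} ``guarantee that selecting one chamber from each orbit and taking their union yields a rational polyhedral cone $\Pi$ that is a fundamental domain.'' An arbitrary choice of one chamber per $\Bir(X)$-orbit gives only a finite union of rational polyhedral cones, in general neither convex nor even connected, so it is not itself a rational polyhedral cone; what \cite{Loo14} provides is the non-constructive existence of some rational polyhedral fundamental domain, and the paper stresses precisely this non-constructiveness before Proposition~\ref{prop_FundamentalDom}. Certifying that the specific union $\NefE(X_0) \cup \bigcup_i \varphi_{0,i}^*\NefE(X_i)$ works is the content of this paper's Theorem~\ref{thm_mainpi} and Proposition~\ref{prop_FundamentalDom}, and it requires extra ingredients beyond Looijenga: the triviality of the $\Aut(X)$-action on $N^1(X)_\RR$ (Proposition~\ref{prop_AutActTrivial}) and the decomposition of arbitrary birational maps into the $\varphi_{i,j}$ via \cite{Kaw08} and \cite[Corollary 7.7]{Wang21}. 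Since the statement you were asked to prove only asserts existence, this overreach does not sink your argument, but you should either delete the ``concretely one is led to'' sentence or supply those additional arguments.
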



\section{Computing Bir(X)} \label{section_computing}

Throughout this section we will assume that $\PP = \PP^n_0 \times \cdots \times \PP^n_m$ with $nm - (n+1) \geq 3$ and that $F$ is a general form of multidegree $(1,\ldots ,1)$ in $\PP$ as in Notation \ref{notation}. For $i=0,\ldots, m$, let $X_i \subset \widehat{\PP}_i$ be the Calabi-Yau variety introduced in the previous section. Recall that $\dim X_i = n(m-1)-1$ and $\rho(X_i) = m$.

Consider the Coxeter system $(W,S)_Q$, with $S = \{s_1,\ldots,s_m\}$, $W = \langle s_i \mid (s_is_j)^{m_{ij}} = 1\rangle$. If $n = 1$, set $m_{ij} = 3$ for $i \neq j$, and if $n \geq 2$ let $m_{ij} = \infty$ for $i\neq j$. The matrix $Q$ is given by 
\begin{equation}\label{eq_matrixQ}
 Q_{ij} = \begin{cases}
1 & \text{if }i=j\\
-n/2 & \text{if }i\neq j.
\end{cases} 
\end{equation}
Under this setup we have that for $n \geq 2$, $W \cong \ZZ/2\ZZ \ast \cdots \ast \ZZ/2\ZZ$.

The matrices $t_j$ from Subsection \ref{subsection:CG} associated to $(W,S)_Q$ are given by
\[ t_j = \begin{pmatrix}
 \mathbf{e}_1  & \ldots & \mathbf{e}_{j-1} & v_j & \mathbf{e}_{j+1} & \ldots & \mathbf{e}_m 
\end{pmatrix}, \] where $v_j = (n,\ldots ,n,\underset{j\text{-th entry}}{-1},n,\ldots ,n)^T$ and $\mathbf{e}_j$ is the $j$-th standard vector. 
 Also define the matrices $\Per_{\sigma}$, with $\sigma$ an element of the symmetric group $\mathcal{S}_m$, as the matrix that permutes rows (respectively columns) by the permutation $\sigma$. With this definition $\Per_{\sigma}\cdot \Per_{\sigma'} = \Per_{\sigma\sigma'}$.

The following result connects the matrices associated to the flops $\varphi_{i,j}\colon X_i \dashrightarrow X_j$ from Subsection \ref{subsection:Construction} and the matrices $t_k$ obtained from the geometrical representation of this Coxeter system.

\begin{proposition}
For $i<j$, the matrix of $\varphi_{i,j}^*$ with respect to the bases $B_j$ and $B_i$ from Equation \eqref{eq_basisBi}  is \[t_j\cdot \Per_{(i+1,i+2,\ldots,j)^{-1}}.\]
\label{propFlopMatrix}
\end{proposition}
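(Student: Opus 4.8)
The plan is to compute the pullback $\varphi_{i,j}^*\colon N^1(X_j)\to N^1(X_i)$ on the basis $B_j$ and to recognize the resulting matrix as $t_j\cdot\Per_{(i+1,\ldots,j)^{-1}}$. Recall from Subsection \ref{subsection:Construction} that $\varphi_{i,j}=(\pi^j_{i,j})^{-1}\circ\pi^i_{i,j}$ factors through the common image $W_{i,j}\subset\widehat{\PP}_{i,j}$. I would split the basis vectors $H^j_k$ of $B_j$ into two types: the $m-1$ classes with $k\neq i,j$, which descend from $\widehat{\PP}_{i,j}$, and the single class $H^j_i$, which is the one ``created'' by the small contraction $\pi^j_{i,j}$.

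For the first type the computation is immediate. For $k\neq i,j$ both $H^i_k$ and $H^j_k$ are the pullbacks of the hyperplane class $h_k$ of the common factor $\PP^n_k$ along $\pi^i_{i,j}$ and $\pi^j_{i,j}$ respectively; since $\varphi_{i,j}$ commutes with the two projections to $W_{i,j}$, one gets $\varphi_{i,j}^*H^j_k=H^i_k$.

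The heart of the argument is the single class $H^j_i$, and here I would use the determinantal description. On $X_i$ the coordinates $\mathbf{x}^j=(x^j_0,\ldots,x^j_n)$ of the $j$-th factor span the (right) kernel of $A_{i,j}$ and define the projection to $\PP^n_j$, so $[x^j_s]=H^i_j$. On the other hand, a point of $X_i$ lying over $w\in W_{i,j}$ is sent by $\varphi_{i,j}$ to the point of $X_j$ whose $i$-th coordinate $\mathbf{x}^i$ spans the left kernel of $A_{i,j}(w)$, i.e. the kernel of $A_{j,i}=A_{i,j}^t$; hence the composite $X_i\dashrightarrow X_j\to\PP^n_i$ is given by $\mathbf{x}^i$ and $[x^i_t]=\varphi_{i,j}^*H^j_i$. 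Since $A_{i,j}$ has corank exactly one at the generic point of $X_i$, the adjugate matrix $\operatorname{adj}(A_{i,j})$ has rank one there, with column space the right kernel and row space the left kernel, so it factors as
\[
\operatorname{adj}(A_{i,j}) = \mathbf{x}^j\,(\mathbf{x}^i)^t .
\]
Reading off the multidegree of the $(s,t)$-entry, whose class is $n\sum_{k\neq i,j}H^i_k$ because each entry of $\operatorname{adj}(A_{i,j})$ is an $n\times n$ minor of a matrix of multidegree $(1,\ldots,1)$, and comparing with $[x^j_s]+[x^i_t]=H^i_j+\varphi_{i,j}^*H^j_i$, yields
\[
\varphi_{i,j}^*H^j_i = n\sum_{k\neq i,j}H^i_k - H^i_j .
\]
One subtlety to address is that the factorization holds a priori only where the corank equals one; since $X_i$ is smooth, hence normal, and the higher-corank locus has codimension at least two (as $\pi^i_{i,j}$ is small), the sections $x^i_t$ extend and the class identity holds on all of $X_i$. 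Note that this computation is uniform in $n\geq 1$.

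Finally I would assemble these computations into the matrix. Writing the ordered bases $B_i$ and $B_j$ in increasing order of their indices and identifying each with $\mathbf{e}_1,\ldots,\mathbf{e}_m$, the last displayed relation says that the column of $\varphi_{i,j}^*$ indexed by $H^j_i$ is exactly the vector $v_j=(n,\ldots,n,-1,n,\ldots,n)^t$, with the $-1$ sitting in the slot of $H^i_j$, while every other column is a standard basis vector. The only discrepancy between source and target indexing is that the removed index jumps from $i$ to $j$, which shifts the positions of the indices $i+1,\ldots,j$ by one; this index shift is precisely the cyclic permutation $(i+1,\ldots,j)$. Tracking it shows that the non-special columns realize $\Per_{(i+1,\ldots,j)^{-1}}$ and the special column contributes the $v_j$ of $t_j$, so altogether $\varphi_{i,j}^*=t_j\cdot\Per_{(i+1,\ldots,j)^{-1}}$. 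I expect the main obstacle to be the middle step: justifying the rank-one factorization of the adjugate, identifying the left-kernel coordinates $\mathbf{x}^i$ with the flop-pullback of $H^j_i$, and extending the sections across the small locus; the index bookkeeping in the last step is routine but must be carried out carefully to pin down the permutation.
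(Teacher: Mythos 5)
Your proposal is correct, and its overall architecture matches the paper's: the same splitting of $B_j$ into the classes $H^j_k$, $k\neq i,j$ (which pull back to $H^i_k$), and the single special class $H^j_i$; the same target identity $\varphi_{i,j}^*(H^j_i) = -H^i_j + n\sum_{k\neq i,j}H^i_k$; and the same permutation bookkeeping at the end. Where you genuinely diverge is the mechanism behind the key identity. The paper works downstairs on $W_{i,j}$: it identifies $(\pi^i_{i,j})_*H^i_j$ and $(\pi^j_{i,j})_*H^j_i$ with loci of maximal minors of an $n\times(n+1)$ (resp.\ $(n+1)\times n$) submatrix of $A_{i,j}$ built from general row (resp.\ column) combinations, and observes that their sum is cut out on $W_{i,j}$ by the determinant of an $n\times n$ submatrix, a form of multidegree $(n,\ldots,n)$; pulling back gives the identity. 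You instead work upstairs on $X_i$, identifying $\mathbf{x}^j$ and $\mathbf{x}^i$ with the right and left kernels of $A_{i,j}$ and exploiting the rank-one factorization $\operatorname{adj}(A_{i,j}) = \mathbf{x}^j(\mathbf{x}^i)^t$, then reading off multidegrees. These are two packagings of the same determinantal fact (an adjugate entry is an $n\times n$ minor vanishing precisely on the union of the two divisors), but the trade-offs differ: the paper's pushforward computation manipulates only divisor classes on $W_{i,j}$, so it never needs to worry about fixed components, whereas your factorization produces explicit equations $\xi_t = \operatorname{adj}(A_{i,j})_{st}/x^j_s$ for the composite $X_i\dashrightarrow \PP^n_i$, at the cost of having to rule out a common divisorial zero of the $\xi_t$ — a common zero divisor would force $A_{i,j}$ to have corank $\geq 2$ along a divisor, contradicting that the exceptional locus of the small map $\pi^i_{i,j}$ has codimension $\geq 2$. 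You correctly flag this as the delicate point and invoke exactly the right ingredient (smallness), though the phrase ``the sections $x^i_t$ extend'' should really be spelled out as the two separate facts that the $\xi_t$ are everywhere-regular sections of $\O_{X_i}\bigl(n\sum_{k\neq i,j}H^i_k - H^i_j\bigr)$ (automatic, since at every point some $x^j_s\neq 0$) and that they have no common divisorial zero (the smallness argument just described).
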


\begin{proof}
Recall the diagram for $\varphi_{i,j}$ from Subsection \ref{subsection:Construction}:
\[
\begin{xy}
\xymatrix{
	X_i  \ar@{-->}[rr]^{\varphi_{i,j}} \ar[dr]_{\pi^i_{i,j}}& & X_j \ar[dl]^{\pi^j_{i,j}}
	\\
	& W_{i,j} &	
}
\end{xy}
\]

From the diagram and the definition of the map $\varphi_{i,j}$ we see directly that 
for $k \not\in \{i,j\}$ we have
\[ \varphi_{i,j}^*(H^j_k) = H^i_k \]
and it remains to determine $\varphi_{i,j}^*(H^j_i)$. 
We consider the push-forwards of $H^i_j$ and $H^j_i$ to $W_{i,j}$. By the definition of the variety $W_{i,j}$, ${(\pi^i_{i,j})}_*H^i_j$ corresponds to the maximal minors of an $n \times (n+1)$ submatrix of $A_{i,j}$ whose rows are  general linear combinations of the rows of $A_{i,j}$, whereas ${(\pi^j_{i,j})}_*H^j_i$ corresponds to the maximal minors of an $(n+1) \times n$ submatrix of $A_{i,j}$ whose columns are  general linear combinations of the columns of $A_{i,j}$. Thus, their sum is a complete intersection of $W_{i,j}$ with a hypersurface of bidegree $(n,\ldots,n)$, corresponding to the determinant of a $n\times n$ submatrix of $A_{i,j}$, that is
\[{(\pi^j_{i,j})}_*H^j_i + {(\pi^i_{i,j})}_*H^i_j = n \left. \left( \sum_{\substack{k=0 \\ k\neq i,j}}^m h_k \right)\right|_{W_{i,j}},\]
where $h_k$ corresponds to the pullback of the hyperplane class of $\PP^n_k$ to $\widehat{\PP}_{i,j}$.
Consequently, on $X_i$ we obtain 
\[ \varphi_{i,j}^*(H^j_i)  = -H^i_j + \sum_{\substack{k=0 \\ k\neq i,j}}^m nH^i_k.\]

Recall from Section \ref{subsection:Construction} that the set $B_i = \{H^i_0,\ldots,H^i_{i-1},H^i_{i+1}, \ldots, H^i_{m}\}$ is a basis of $N^1(X_i)_\RR$. Because $i<j$, the element $H_i^j$ is in the $(i+1)$-th position of the basis $B_j$, while the element $H_j^i$ is in the $j$-th position of the basis $B_i$. After permuting $B_j$ by the cycle $(i+1,i+2,\ldots,j)^{-1}$ the matrix of $\varphi_{i,j}^*$ is given by the columns \[ \begin{pmatrix}
\mathbf{e}_1  & \ldots & \mathbf{e}_{j-1} & v_j & \mathbf{e}_{j+1} & \ldots & \mathbf{e}_m 
\end{pmatrix}, \] where $v_j = (n,\ldots ,n,\underset{j\text{-th entry}}{-1},n,\ldots ,n)^T$. This matrix corresponds to the matrix $t_j$, which proves the statement.
\end{proof}
From an easy computation we obtain the following result:
\begin{lemma}\label{lemmaSwapPerm}
Given $t_i$ as defined above and $\sigma \in \mathcal{S}_m$, we have that \[ \Per_{\sigma}\cdot t_i = t_{\sigma(i)}\cdot \Per_\sigma \]
\end{lemma}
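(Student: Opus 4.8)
The plan is to verify the identity $\Per_{\sigma} \cdot t_i = t_{\sigma(i)} \cdot \Per_\sigma$ by comparing the two sides as linear maps, checking their action on each standard basis vector $\mathbf{e}_k$. Recall that $\Per_\sigma$ is the permutation matrix sending $\mathbf{e}_k \mapsto \mathbf{e}_{\sigma(k)}$, and that $t_i$ is the identity matrix except that its $i$-th column is replaced by $v_i = (n,\ldots,n,-1,n,\ldots,n)^T$, where the $-1$ sits in the $i$-th entry. Thus $t_i \mathbf{e}_k = \mathbf{e}_k$ for $k \neq i$, while $t_i \mathbf{e}_i = v_i$. Since both sides of the claimed identity are products of explicit matrices, the cleanest route is a direct column-by-column computation rather than any abstract argument.

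First I would compute the right-hand side applied to $\mathbf{e}_k$. We have $(t_{\sigma(i)} \cdot \Per_\sigma)\mathbf{e}_k = t_{\sigma(i)}\mathbf{e}_{\sigma(k)}$. When $\sigma(k) \neq \sigma(i)$, i.e. $k \neq i$, this equals $\mathbf{e}_{\sigma(k)}$; when $k = i$, it equals $t_{\sigma(i)}\mathbf{e}_{\sigma(i)} = v_{\sigma(i)}$. Next I would compute the left-hand side: $(\Per_\sigma \cdot t_i)\mathbf{e}_k = \Per_\sigma(t_i \mathbf{e}_k)$. For $k \neq i$ this is $\Per_\sigma \mathbf{e}_k = \mathbf{e}_{\sigma(k)}$, matching the right-hand side immediately. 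For $k = i$ it is $\Per_\sigma v_i$, so the only substantive point is to check that $\Per_\sigma v_i = v_{\sigma(i)}$.

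The key step is therefore this last equality. The vector $v_i$ has entry $-1$ in position $i$ and entry $n$ in all other positions. Applying $\Per_\sigma$ permutes the entries according to $\sigma$, so the entry that was in position $i$ (namely $-1$) moves to position $\sigma(i)$, and every entry equal to $n$ remains equal to $n$. The resulting vector thus has $-1$ in position $\sigma(i)$ and $n$ elsewhere, which is precisely $v_{\sigma(i)}$. Since the two sides agree on every basis vector $\mathbf{e}_k$, they agree as matrices, completing the proof.

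There is no real obstacle here; the statement is, as the paper says, an easy computation. The only point requiring a moment of care is keeping the convention for $\Per_\sigma$ (whether it permutes rows or columns, and whether the action is by $\sigma$ or $\sigma^{-1}$) consistent with the composition law $\Per_\sigma \cdot \Per_{\sigma'} = \Per_{\sigma\sigma'}$ fixed earlier; once that convention is pinned down so that $\Per_\sigma \mathbf{e}_k = \mathbf{e}_{\sigma(k)}$, the identity $\Per_\sigma v_i = v_{\sigma(i)}$ and hence the full claim follow transparently.
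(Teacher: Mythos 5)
Your proof is correct and matches the paper's approach: the paper states this lemma with no proof beyond the remark that it follows ``from an easy computation,'' and your column-by-column verification, reducing everything to the single identity $\Per_\sigma v_i = v_{\sigma(i)}$, is precisely that computation. You also rightly pin down the convention $\Per_\sigma \mathbf{e}_k = \mathbf{e}_{\sigma(k)}$ as the one forced by the composition law $\Per_\sigma \cdot \Per_{\sigma'} = \Per_{\sigma\sigma'}$ (and consistent with the paper's later use of the lemma), which is the only point where care is needed.
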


\begin{proposition}
For $i>j$, the matrix of $\varphi_{i,j}^*$ with respect to the bases $B_j$ and $B_i$ is \[t_{j+1}\cdot \Per_{(j+1,j+2,\ldots,i)}.\]
\end{proposition}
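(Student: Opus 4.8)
The plan is to reduce the case $i>j$ to the already-established case $i<j$ from Proposition \ref{propFlopMatrix} by exploiting the symmetry of the construction. The key observation is that the birational map $\varphi_{i,j}\colon X_i \dashrightarrow X_j$ is the inverse of $\varphi_{j,i}\colon X_j \dashrightarrow X_i$, since both sit over the same intermediate variety $W_{i,j} = W_{j,i}$ with the two projections $\pi^i_{i,j}$ and $\pi^j_{i,j}$ swapped. Therefore on the level of pullbacks we have $\varphi_{i,j}^* = (\varphi_{j,i}^*)^{-1}$, and I would compute the right-hand side using Proposition \ref{propFlopMatrix}, which applies to $\varphi_{j,i}$ because now $j<i$.

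First I would write down, from Proposition \ref{propFlopMatrix} applied to the pair $(j,i)$ with $j<i$, that the matrix of $\varphi_{j,i}^*$ with respect to the bases $B_i$ and $B_j$ is $t_i \cdot \Per_{(j+1,j+2,\ldots,i)^{-1}}$. Taking the inverse gives the matrix of $\varphi_{i,j}^*$ as $\Per_{(j+1,\ldots,i)} \cdot t_i^{-1}$. The next step is to use that each $t_k$ is an involution: since $s_k$ is an involution in $W$ (as $m_{kk}=1$ and, for $n\geq 2$, $s_k^2=1$), the representing matrix satisfies $t_k^{-1} = t_k$. Hence the matrix of $\varphi_{i,j}^*$ equals $\Per_{(j+1,\ldots,i)} \cdot t_i$. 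Finally I would invoke Lemma \ref{lemmaSwapPerm} with $\sigma = (j+1,\ldots,i)$, which sends index $i$ to $j+1$, to rewrite $\Per_{\sigma}\cdot t_i = t_{\sigma(i)}\cdot \Per_{\sigma} = t_{j+1}\cdot \Per_{(j+1,\ldots,i)}$, yielding exactly the claimed expression.

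One must be careful to keep track of which basis ordering is in play and to confirm that the permutation bookkeeping is consistent between Proposition \ref{propFlopMatrix} and the present statement; in particular the cycle appearing here, $(j+1,j+2,\ldots,i)$, is the inverse of the cycle that appears after inverting the $i<j$ formula, so the step that turns $\Per_{(j+1,\ldots,i)^{-1}}$ inverted back into $\Per_{(j+1,\ldots,i)}$ should be checked against the convention $\Per_{\sigma}\Per_{\sigma'} = \Per_{\sigma\sigma'}$. The main obstacle I anticipate is not any deep geometry but rather getting these permutation indices exactly right, since the off-diagonal placement of $H^i_j$ versus $H^j_i$ in the respective bases $B_j$ and $B_i$ is precisely what the cycle is compensating for; a single transposition error would produce the wrong cycle. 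To guard against this I would verify the formula directly in a small case, say $m=3$ with $(i,j)=(2,1)$, checking that $\Per_{(2)}\cdot t_2 = t_2$ reproduces the matrix obtained from the geometric description $\varphi_{2,1}^*(H^1_2) = -H^2_1 + \sum_{k\neq 1,2} n H^2_k$, exactly as in the proof of Proposition \ref{propFlopMatrix}.
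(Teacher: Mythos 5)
Your proof is correct and follows essentially the same route as the paper: write $\varphi_{i,j}^* = (\varphi_{j,i}^*)^{-1}$, invert the formula $t_i\cdot \Per_{(j+1,\ldots,i)^{-1}}$ from Proposition \ref{propFlopMatrix} using $t_i^2 = \id$, and then apply Lemma \ref{lemmaSwapPerm} with $\sigma(i) = j+1$ to obtain $t_{j+1}\cdot \Per_{(j+1,\ldots,i)}$. The extra sanity check in a small case is not needed but does no harm.
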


\begin{proof}
From the construction we have that $\varphi_{i,j}^* = (\varphi_{j,i}^*)^{-1}$. From Proposition \ref{propFlopMatrix}, $(\varphi_{j,i}^*)^{-1} = \Per_{(j+1,j+2,\ldots,i)}\cdot t_i$ because $t_i^2 = \mathrm{id}$. Applying Lemma \ref{lemmaSwapPerm} we obtain that $\varphi_{i,j}^* = t_{j+1}\cdot \Per_{(j+1,j+2,\ldots,i)}$.
\end{proof}

\begin{proposition}
Let $\psi_{i,j}:X \dashrightarrow X$ be the composition of flops 
\[
\begin{xy}
\xymatrix{
	X \ar@/^2.2pc/@{-->}[rrrrrr]^{\psi_{i,j}} \ar@{-->}[rr]^{\varphi_{0,i}} \ar[dr]_{\pi^0_{0,i}}& & X_i \ar[dl]^{\pi^i_{0,i}}
	\ar@{-->}[rr]^{\varphi_{i,j}} \ar[dr]& & X_j \ar[dl]
	\ar@{-->}[rr]^{\varphi_{j,0}} \ar[dr]& & X \ar[dl]
	\\
	& W_{0,i} && W_{i,j} && W_{j,0}
}
\end{xy}.
\]

Then $\psi_{i,j}^* = t_it_jt_i\cdot \Per_{(i,j)}$.
\label{propBirAutMatrix}
\end{proposition}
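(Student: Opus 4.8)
The plan is to compute the matrix of $\psi_{i,j}^*$ by composing the matrices of the three flops $\varphi_{0,i}$, $\varphi_{i,j}$, $\varphi_{j,0}$ in the correct order, using the preceding two propositions and Lemma \ref{lemmaSwapPerm}. Since $\psi_{i,j} = \varphi_{j,0}\circ\varphi_{i,j}\circ\varphi_{0,i}$ as a composition of rational maps, the pullback reverses the order, so $\psi_{i,j}^* = \varphi_{0,i}^*\circ\varphi_{i,j}^*\circ\varphi_{j,0}^*$. Here I must be careful that each intermediate pullback is written with respect to the correct pair of bases $B_\bullet$ so that the matrices are genuinely composable: $\varphi_{j,0}^*$ goes from $N^1(X)_\RR = N^1(X_0)_\RR$ in the basis $B_0$ to $N^1(X_j)_\RR$ in basis $B_j$, then $\varphi_{i,j}^*$ to $N^1(X_i)_\RR$ in $B_i$, then $\varphi_{0,i}^*$ back to $N^1(X_0)_\RR$ in $B_0$.

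First I would record the three factors. For a clean statement I assume $i<j$ (the case $i>j$ being symmetric); then $\varphi_{0,i}^* = t_i$ by Proposition \ref{propFlopMatrix} with the first index $0$, since the cycle $(1,\ldots,i)$ and the subsequent permutation bookkeeping reduce the leading factor to $t_i$ in this base case. Similarly $\varphi_{j,0}^*$ is obtained from the $i>j$ proposition (with the roles of $0$ and $j$), and $\varphi_{i,j}^*$ from Proposition \ref{propFlopMatrix}. Substituting these expressions gives $\psi_{i,j}^*$ as a product of three $t$-matrices interleaved with several permutation matrices $\Per_\sigma$.

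The main work — and the step I expect to be the main obstacle — is then to collapse this product into the asserted normal form $t_it_jt_i\cdot\Per_{(i,j)}$. The key tool is Lemma \ref{lemmaSwapPerm}, $\Per_\sigma\, t_k = t_{\sigma(k)}\,\Per_\sigma$, which lets me move every permutation matrix to the right past the $t$-factors, at the cost of relabelling the indices of those factors by $\sigma$. I would push all $\Per$'s to the right one at a time, tracking how each cyclic permutation $(i+1,\ldots,j)^{\pm1}$ acts on the subscripts of the $t$'s it crosses; the product of the accumulated permutations must simplify to the transposition $(i,j)$, and the three surviving $t$-subscripts must become $i$, $j$, $i$ in that order. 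This is a bookkeeping computation on $\mathcal{S}_m$ together with the relabelling rule, so the risk is purely combinatorial index-chasing rather than any conceptual gap; verifying it on a small case such as $(i,j)=(1,2)$ first would confirm the normal form before doing the general manipulation.
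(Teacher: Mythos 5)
Your plan is essentially the paper's proof: write $\psi_{i,j}^* = \varphi_{0,i}^*\cdot\varphi_{i,j}^*\cdot\varphi_{j,0}^*$, substitute the matrices from Proposition \ref{propFlopMatrix} (the paper uses $(\varphi_{0,j}^*)^{-1}$ where you use the $i>j$ formula, which is the same thing), push all $\Per_\sigma$ factors to the right via Lemma \ref{lemmaSwapPerm}, and check that the accumulated permutation is the transposition $(i,j)$ --- precisely the paper's computation $(1,\ldots,i)^{-1}(i+1,\ldots,j)^{-1}(1,\ldots,j)=(i,j)$, followed by $t_it_j\Per_{(i,j)}t_j = t_it_jt_i\Per_{(i,j)}$. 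One caution: your intermediate assertion ``$\varphi_{0,i}^*=t_i$'' is literally false for $i\geq 2$ (the correct matrix is $t_i\cdot\Per_{(1,\ldots,i)^{-1}}$, and discarding that permutation would change the final answer), but since your plan explicitly keeps and transports every permutation factor, this reads as loose wording rather than a genuine gap.
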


\begin{proof}
Assume that $i < j$. Then by Proposition \ref{propFlopMatrix} 
\begin{align*}
\psi_{i,j}^* 	&= \varphi_{0,i}^*\cdot \varphi_{i,j}^* \cdot (\varphi_{0,j}^*)^{-1}\\
				&= t_i\cdot \Per_{(1,\ldots,i)^{-1}}\cdot t_j\cdot \Per_{(i+1,\ldots,j)^{-1}}\cdot \Per_{(1,\ldots,j)}\cdot t_j.\\
\intertext{Because $i < j$, by Lemma \ref{lemmaSwapPerm} we obatin $\Per_{(1,\ldots,i)^{-1}}\cdot t_j = t_j\cdot \Per_{(1,\ldots,i)^{-1}}$. Then}\\
\psi_{i,j}^* 	&= t_i\cdot t_j \cdot \Per_{(1,\ldots,i)^{-1}} \cdot \Per_{(i+1,\ldots,j)^{-1}}\cdot \Per_{(1,\ldots,j)}\cdot t_j. \\
\end{align*} 
We need to compute $(1,\ldots,i)^{-1}(i+1,\ldots,j)^{-1}(1,\ldots,j)$. If $k < i$, then 
\begin{align*}
(1,\ldots,i)^{-1}(i+1,\ldots,j)^{-1}(1,\ldots,j)[ k] 	&= (1,\ldots,i)^{-1}(i+1,\ldots,j)^{-1}[k+1]\\
														&= (1,\ldots,i)^{-1}[k+1]\\
														&= k.
\end{align*} 
If $i < k < j$, then 
\begin{align*}
(1,\ldots,i)^{-1}(i+1,\ldots,j)^{-1}(1,\ldots,j)[k] 	&= (1,\ldots,i)^{-1}(i+1,\ldots,j)^{-1}[k+1]\\
														&= (1,\ldots,i)^{-1}[k]\\
														&= k.
\end{align*} 
If $j<k$, then
\begin{align*}
(1,\ldots,i)^{-1}(i+1,\ldots,j)^{-1}(1,\ldots,j)[k] 	&= (1,\ldots,i)^{-1}(i+1,\ldots,j)^{-1}[k]\\
														&= (1,\ldots,i)^{-1}[k]\\
														&= k.
\end{align*} 
Finally, 
\begin{align*}
(1,\ldots,i)^{-1}(i+1,\ldots,j)^{-1}(1,\ldots,j)[i] 	&= (1,\ldots,i)^{-1}(i+1,\ldots,j)^{-1}[i+1]\\
														&= (1,\ldots,i)^{-1}[j]\\
														&= j,\\
(1,\ldots,i)^{-1}(i+1,\ldots,j)^{-1}(1,\ldots,j)[j] 	&= (1,\ldots,i)^{-1}(i+1,\ldots,j)^{-1}[1]\\
														&= (1,\ldots,i)^{-1}[1]\\
														&= i.										
\end{align*}
Therefore, $(1,\ldots,i)^{-1}(i+1,\ldots,j)^{-1}(1,\ldots,j) = (i,j)$.

Back to our computation, we get 
\begin{align*}
\psi_{i,j}^* 	&= t_i\cdot t_j \cdot \Per_{(1,\ldots,i)^{-1}} \cdot \Per_{(i+1,\ldots,j)^{-1}}\cdot \Per_{(1,\ldots,j)}\cdot t_j \\
				&= t_i\cdot t_j \cdot \Per_{(i,j)}\cdot t_j\\
				&= t_it_jt_i\cdot \Per_{(i,j)}.
\end{align*}

For $i>j$, notice that $\psi_{i,j}^* = (\psi_{j,i}^*)^{-1}$, and so $\psi_{i,j}^* = \Per_{(i,j)}\cdot t_jt_it_j = t_it_jt_i\cdot \Per_{(i,j)}$.
\end{proof}

\begin{proposition}\label{prop_diag}
For $i\neq j$ and $1 \leq i,j \leq m$, and consider the birational automorphism $\psi_{i,j}$. If $n = 1$, then $\psi_{i,j}^*$ has order 2, and if $n \geq 2$, $\psi_{i,j}^*$ has infinite order. In the case of $n = 2$ we get that 1 is the only eigenvalue, and that $\psi_{i,j}^*$ is not diagonalizable. When $n \geq 3$, $\psi_{i,j}^*$ is diagonalizable and the eigenvalues are $\lambda$, $1$ and $\lambda^{-1}$ for some $\lambda > 1$. Even more, the eigenvalue $\lambda$ has multiplicity one.
\end{proposition}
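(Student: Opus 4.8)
By Proposition \ref{propBirAutMatrix} we have $\psi_{i,j}^* = t_it_jt_i\cdot \Per_{(i,j)}$, so the first reduction is to understand the spectral behaviour of this $m\times m$ matrix. Since conjugation does not change eigenvalues and $\Per_{(i,j)}$ is conjugate to a transposition of standard basis vectors, the plan is to reduce to the $2\times 2$ block where the interesting action takes place: the matrices $t_i,t_j$ act as the identity on all coordinates $\mathbf{e}_k$ with $k\neq i,j$, and $\Per_{(i,j)}$ fixes those coordinates as well (it only swaps the $i$-th and $j$-th). Hence $\psi_{i,j}^*$ decomposes as the identity on the $(m-2)$-dimensional subspace spanned by $\{\mathbf{e}_k : k\neq i,j\}$, contributing eigenvalue $1$ with multiplicity at least $m-2$, direct-summed with a $2\times 2$ matrix $M$ acting on $\mathrm{span}(\mathbf{e}_i,\mathbf{e}_j)$. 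The problem is thus entirely governed by $M$, and everything reduces to an explicit $2\times 2$ computation.

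The second step is to compute $M$ explicitly. From $v_j = (n,\ldots,n,-1,n,\ldots,n)^T$, restricting $t_i$ and $t_j$ to the coordinate plane $\mathrm{span}(\mathbf{e}_i,\mathbf{e}_j)$ gives the $2\times 2$ reflections $\begin{pmatrix} -1 & 0 \\ n & 1 \end{pmatrix}$ and $\begin{pmatrix} 1 & n \\ 0 & -1 \end{pmatrix}$ (in the ordered basis $\mathbf{e}_i,\mathbf{e}_j$), and the restriction of $\Per_{(i,j)}$ is $\begin{pmatrix} 0 & 1 \\ 1 & 0 \end{pmatrix}$. I would then multiply these four factors out to obtain $M$. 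Because $M$ is a product of a reflection-type matrix and a swap, I expect $\det M = \pm 1$; tracking signs, $\det(t_i)=\det(t_j)=-1$ and $\det\Per_{(i,j)}=-1$ give $\det M = \det(\psi_{i,j}^*) = (-1)(-1)(-1)(-1)=1$ on the $2\times 2$ block after accounting for the full determinant, so the two eigenvalues of $M$ multiply to $1$. Writing $\mathrm{tr}(M)=T$, the eigenvalues are the roots of $\mu^2 - T\mu + 1 = 0$, and the entire classification comes down to comparing $T$ with $2$.

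The third step is the case analysis on $n$ via $T = \mathrm{tr}(M)$, which I expect to be a polynomial in $n$. If $T = 2$ the characteristic polynomial is $(\mu-1)^2$, giving a repeated eigenvalue $1$; one then checks whether $M$ is a nontrivial Jordan block (so $M\neq \mathrm{id}$, forcing non-diagonalizability and infinite order) or actually the identity (giving order dividing $2$). I anticipate $n=1$ yielding the finite-order / involutive case and $n=2$ yielding $T=2$ with a nontrivial Jordan block — this matches the stated dichotomy, since for $n=2$ the claim is exactly that $1$ is the only eigenvalue and $\psi_{i,j}^*$ is not diagonalizable. For $n\geq 3$ I expect $T>2$, so $\mu^2 - T\mu + 1$ has two distinct real roots $\lambda>1$ and $\lambda^{-1}<1$; distinct eigenvalues on the $2\times 2$ block together with the $(m-2)$-fold eigenvalue $1$ on the complementary block force $\psi_{i,j}^*$ to be diagonalizable with spectrum $\{\lambda, 1,\lambda^{-1}\}$ and $\lambda$ of multiplicity one (since $\lambda>1>\lambda^{-1}$ and $\lambda\neq 1$, the $1$-eigenspace coming only from the complementary block). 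For the infinite-order statement when $n\geq 2$, it suffices to note that for $n=2$ the nontrivial unipotent block has infinite order, and for $n\geq 3$ the eigenvalue $\lambda>1$ means $\psi_{i,j}^*$ has powers with arbitrarily large norm, hence infinite order.

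The main obstacle is purely computational rather than conceptual: correctly multiplying out the four $2\times 2$ factors in the right order (respecting that $\psi_{i,j}^* = t_it_jt_i\Per_{(i,j)}$, not a symmetric product) and getting the trace $T$ as a clean function of $n$, then verifying that the threshold between the parabolic case ($n=2$) and the hyperbolic case ($n\geq 3$) falls exactly where claimed. A secondary subtlety is confirming the \emph{geometric} multiplicity in the $n=2$ case — that the repeated eigenvalue genuinely yields a single Jordan block on the $2\times 2$ piece and not $\mathrm{id}$ — which requires checking that the off-diagonal entry of $M-\mathrm{id}$ is nonzero; I expect this to follow immediately once $M$ is written down.
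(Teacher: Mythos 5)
Your reduction rests on a false structural claim: $\mathrm{span}(\mathbf{e}_i,\mathbf{e}_j)$ is \emph{not} invariant under $t_i$ (nor under $t_j$), because $t_i\mathbf{e}_i=v_i$ has the entry $n$ in \emph{every} coordinate $k\neq i$, not just the $j$-th. So $\psi_{i,j}^*$ is not the direct sum of the identity on $U=\mathrm{span}(\mathbf{e}_k : k\neq i,j)$ with a $2\times 2$ block; it is only block \emph{triangular}: $U$ is pointwise fixed, and your $M$ is the map induced on the quotient $N^1(X)_\RR/U$. The paper's own examples show this: for $n=2$, $m=3$ the matrix $\psi_{1,2}^*$ has third row $(6,12,1)$, so the off-diagonal block is nonzero. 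This gap hits exactly the two delicate claims. First, diagonalizability does not pass from the diagonal blocks of a triangular matrix to the whole matrix (consider $\begin{pmatrix}1&0\\1&1\end{pmatrix}$); for $n\geq 3$ you must add that the spectra $\{1\}$ and $\{\lambda,\lambda^{-1}\}$ are disjoint, or count multiplicities: eigenvalue $1$ has algebraic multiplicity exactly $m-2$ by the characteristic polynomial and geometric multiplicity at least $m-2$ from $U$, while $\lambda,\lambda^{-1}$ are simple, so the geometric multiplicities sum to $m$. Second, order is not ``entirely governed by $M$'': writing $\psi_{i,j}^*=\begin{pmatrix} M & 0\\ C & I\end{pmatrix}$ one has $(\psi_{i,j}^*)^k=\begin{pmatrix} M^k & 0\\ C(M^{k-1}+\cdots+M+I) & I\end{pmatrix}$, so for $n=1$ you still need $C(M+I)=0$; this does hold, because your computation gives $M=-I$ there (equivalently, $1$ is not an eigenvalue of $M$, so $M^2=I$ forces $M+I=0$), but it is a step your direct-sum premise silently erases. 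The $n=2$ argument (all eigenvalues $1$, matrix not the identity, hence non-diagonalizable and of infinite order) survives unchanged.

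With these repairs your computation does go through: one finds $\mathrm{tr}(M)=n^3-3n$, $\det M=1$, and block triangularity still yields $\mathrm{cp}_{\psi_{i,j}^*}(\mu)=(\mu-1)^{m-2}\left(\mu^2-(n^3-3n)\mu+1\right)$, giving the stated trichotomy at $n=1,2,\geq 3$. This is a genuinely different route from the paper's: there, the swap identity $\Per_{(i,j)}t_i=t_j\Per_{(i,j)}$ gives $(\psi_{i,j}^*)^2=(t_it_j)^3$, the characteristic polynomial computed is that of the Coxeter element $t_it_j$, namely $(x-1)^{m-2}(x^2-(n^2-2)x+1)$, and the $n=1$ case is dispatched instantly by the Coxeter relation $(t_it_j)^3=\id$. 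The paper's reduction trades your explicit quotient computation for the group-theoretic relation, which in particular avoids the unipotent-block subtlety above; your (corrected) version has the modest advantage of producing the eigenvalues of $\psi_{i,j}^*$ itself rather than of its square.
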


\begin{proof}
Combining the results from Lemma \ref{lemmaSwapPerm} and Proposition \ref{propBirAutMatrix} we get that 
\[
\psi_{i,j}^* = t_it_jt_i\cdot \Per_{(i,j)} = \Per_{(i,j)} \cdot t_jt_it_j.
\]

Hence $(\psi_{i,j}^*)^2 = t_it_jt_it_jt_it_j = (t_it_j)^3$. Recall that for $n = 1$, $m_{ij} = 3$, so $(t_it_j)^3 = 1$. For the case $n \geq 2$ it is enough to compute the eigenvalues of $t_it_j$. To do so, we follow the same computation done in \cite[Proposition 4.11]{Y21}.

We write the matrices $t_i$ and $t_j$ in terms of their columns
\[t_i = \begin{pmatrix}
\mathbf{e}_1  & \ldots & \mathbf{e}_{i-1} & v_i & \mathbf{e}_{i+1} & \ldots & \mathbf{e}_m 
\end{pmatrix} \]
and
\[t_j = \begin{pmatrix}
\mathbf{e}_1  & \ldots & \mathbf{e}_{j-1} & v_j & \mathbf{e}_{j+1} & \ldots & \mathbf{e}_m 
\end{pmatrix}, \] where $v_k = (n,\ldots ,n,\underset{k\text{-th entry}}{-1},n,\ldots ,n)^T$.
By computing the product $t_it_j$ we get that the characteristic polynomial of it is \[ \mathrm{cp}_{t_it_j}(x) = (x-1)^{m-2}(x^2 - (n^2-2)x + 1). \]

If $n = 2$, then $\mathrm{cp}_{t_it_j}(x) = (x-1)^m$, but because $t_it_j$ is not the identity matrix, we have that $t_it_j$ has infinite order and it is not diagonalizable.

If $n \geq 3$, then $x^2 - (n^2-2)x + 1$ has two different real roots $\lambda_1$, $\lambda_2$. Because the sum of them is a positive number and $\lambda_1\lambda_2 = 1$, then one and only one of them must be greater than 1.
\end{proof}
Next we describe the automorphism and birational automorphism group of $X$. 
\begin{proposition}\label{prop_AutActTrivial}
	For a general $X$, the automorphisms of $X$ act trivially on $\MovE(X)$. 
\end{proposition}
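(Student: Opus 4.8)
The plan is to reduce the statement to a genericity assertion about factor-permuting symmetries of the ambient product and to settle the latter by a dimension count. Throughout, recall that $X = X_0 \subset \widehat{\PP}_0 = \PP^n_1 \times \cdots \times \PP^n_m$ is a complete intersection of ample divisors of multidegree $(1,\ldots,1)$, that $N^1(X)_\RR \cong \RR^m$ has the classes $H^0_1,\ldots,H^0_m$ of the basis $B_0$ as a lattice basis, and that $\Nef(X) = \mathrm{Cone}(H^0_1,\ldots,H^0_m)$ is simplicial with the $H^0_i$ as primitive generators of its extremal rays.

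First I would reduce to the action on all of $N^1(X)_\RR$. Since $\Nef(X) \subseteq \MovE(X)$ and $\Nef(X)$ is $m$-dimensional, the cone $\MovE(X)$ is full-dimensional; as $\Aut(X)$ is finite, any $g \in \Aut(X)$ fixing $\MovE(X)$ pointwise must act as the identity on $N^1(X)_\RR$. Hence it suffices to prove that $g^* = \id$ on $N^1(X)_\RR$ for every $g$. Because $g^*$ preserves $\Nef(X)$ and permutes the primitive generators of its extremal rays, it induces a permutation $\sigma \in \mathcal{S}_m$ with $g^* H^0_i = H^0_{\sigma(i)}$, and the claim becomes: for general $X$, the only permutation arising this way is $\sigma = \id$.

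The key step is a lifting argument. The complete linear system $|H^0_i|$ is base-point-free and, by the complete intersection structure (Koszul resolution and vanishing on $\widehat{\PP}_0$), satisfies $H^0(X, H^0_i) \cong H^0(\PP^n_i, \O(1))$; thus $|H^0_i|$ defines precisely the factor projection $p_i \colon X \to \PP^n_i$, and $(p_1,\ldots,p_m)$ recovers the embedding $X \hookrightarrow \widehat{\PP}_0$. Given $g$ with associated $\sigma$, the morphism $p_i \circ g$ is defined by $g^*|H^0_i| = |H^0_{\sigma(i)}|$, so it agrees with $p_{\sigma(i)}$ up to an element $A_i \in \PGL_{n+1}$. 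Assembling the $A_i$ and the permutation $\sigma$ produces an automorphism $\tilde{g} \in \Aut(\widehat{\PP}_0) = \PGL_{n+1}^m \rtimes \mathcal{S}_m$ that permutes the factors by $\sigma$ and restricts to $g$ on $X$; in particular, if $\sigma \neq \id$ then $X$ is invariant under a nontrivial factor-permuting automorphism of $\widehat{\PP}_0$.

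Finally, I would rule this out for general $X$ by a parameter count. The complete intersections are parametrized by $\mathrm{Gr}(n+1, V)$ with $V = H^0(\widehat{\PP}_0, \O(1,\ldots,1))$ of dimension $(n+1)^m$, and $G := \Aut(\widehat{\PP}_0) = \PGL_{n+1}^m \rtimes \mathcal{S}_m$ acts on it. For each nontrivial $\sigma \in \mathcal{S}_m$, let $G_\sigma \subset G$ be the coset of elements lying over $\sigma$ and form the incidence variety $\mathcal{I}_\sigma = \{(g,[W]) : g\in G_\sigma,\ g\cdot[W]=[W]\}$. Projecting to $G_\sigma$, the fibre over a generic $g$ is the variety of $g$-invariant $(n+1)$-planes in $V$, whose dimension is governed by the eigenvalue multiplicities of $g$ acting on the tensor product $V = \bigotimes_i H^0(\PP^n_i,\O(1))$. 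Bounding $\dim\mathcal{I}_\sigma$ and comparing the resulting estimate for $\dim \pi_2(\mathcal{I}_\sigma)$ with $\dim \mathrm{Gr}(n+1,V) = (n+1)\big((n+1)^m-(n+1)\big)$, one checks, using $nm-(n+1)\ge 3$, that the image is a proper closed subset; taking the union over the finitely many nontrivial $\sigma$ preserves this. Hence a general $X$ carries no factor-permuting automorphism, forcing $\sigma=\id$ for every $g\in\Aut(X)$. I expect this last dimension estimate — in particular controlling the fixed loci on the Grassmannian of the twisted elements $g\in G_\sigma$, whose spectrum on $V$ need not be simple when $\sigma\ne\id$ — to be the main technical obstacle.
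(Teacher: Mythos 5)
Your first two steps are correct and are exactly the paper's: you use finiteness of $\Aut(X)$, the identification $H^0(X,H^0_i)\cong H^0(\PP^n_i,\O(1))$ to lift any $g\in\Aut(X)$ to $\tilde g\in\Aut(\widehat{\PP}_0)=\PGL_{n+1}(\CC)^m\rtimes\mathcal{S}_m$, and thereby reduce the proposition to the claim that a general $X$ is invariant under no factor-permuting automorphism of $\widehat{\PP}_0$. The divergence is in how that genericity claim is settled, and this is where your proposal has a genuine gap: the incidence-variety dimension count that you set up is never carried out, and you yourself flag it as ``the main technical obstacle.'' Since everything before it is soft, this count \emph{is} the mathematical content of the statement, so the proposal is a plan rather than a proof. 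Worse, the count is not routine in the form you state it: the naive bound $\dim\mathcal{I}_\sigma\le\dim G_\sigma+\max_{g\in G_\sigma}\dim\mathrm{Fix}_{\mathrm{Gr}}(g)$ actually fails in admissible cases. For instance, take $n=4$, $m=2$, $\sigma$ the transposition (allowed, since $nm-(n+1)=3$): here $\dim G_\sigma=48$, and an element $g=(A_1,A_2;\sigma)$ with $A_1A_2$ scalar acts on $V\cong\CC^{25}$ with eigenspaces of dimensions $\binom{6}{2}=15$ and $\binom{5}{2}=10$, whose fixed locus in $\mathrm{Gr}(5,25)$ has dimension $4\cdot 11+1\cdot 9=53$; the naive bound gives $48+53=101>100=\dim\mathrm{Gr}(5,25)$. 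So one must stratify $G_\sigma$ by eigenvalue multiplicities (the elements with large eigenspaces form small subfamilies, e.g.\ the locus $A_1A_2\propto\mathrm{id}$ has dimension only $24$, giving $24+53=77<100$), control the intermediate strata, and deal with non-semisimple elements (or argue that only finite-order elements of $G_\sigma$ are relevant, since the stabilizer of $X$ injects into the finite group $\Aut(X)$). None of this appears in your proposal, and ``one checks, using $nm-(n+1)\ge 3$'' is exactly the step that would fail if done naively.

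For comparison, the paper short-circuits the Grassmannian count entirely: since $\tilde g$ preserves the $(n+1)$-dimensional span $W$ of the defining $(1,\ldots,1)$-forms, $W$ must contain an eigenvector of $\tilde g^*$, i.e.\ a form (semi-)invariant under a factor-permuting automorphism, and the paper argues (following the proof of Theorem 3.3 in the Cantat--Oguiso reference) that a general choice of defining equations admits no such form. So both arguments end in a genericity assertion, but the paper's reduction to a single invariant form is far more economical than your full incidence analysis on $\mathrm{Gr}(n+1,V)$; your route can very likely be completed, but the completion --- the stratified eigenspace/fixed-locus estimate --- is precisely what is missing.
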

\begin{proof}
	We proceed similarly as in \cite[Theorem 3.3]{CO15}.
	First we remark that for a general choice of $n+1$ $(1,\ldots,1)$-forms in $\widehat{\PP}_0$, the corresponding variety $X$ is a smooth Calabi-Yau variety. Moreover, we can assume that  any $n$ of the $n+1$ forms define  also a smooth variety which is then a Fano manifold $Y$.  Then $X$ is a smooth anticanonical section of $Y$ and thus,  $\Aut(X)$ is finite by \cite[Theorem 3.1]{CO15}. 
	
	The group $\Aut(X)$ preserves the ample cone of $X$ and hence fixes the set $\{H^0_1,\ldots,H^0_m\}$. 
	Similarly as in the proof of \cite[Theorem 3.3]{CO15} we have 
	\[
	H^0(\widehat{\PP}_0,H_i) = H^0(X, H^0_i)
	\]
	and 
	$\Aut(X)$ is a finite subgroup of 
	\[
	\Aut(\widehat{\PP}_0) = PGL_{n+1}(\CC)^m \rtimes \mathcal{S}_m. 
	\]
	Let $G$ be the image of 
	\[
	\Aut(X) \rightarrow PGL_{n+1}(\CC)^m \rtimes \mathcal{S}_m \rightarrow \mathcal{S}_m.
	\]
	For an element $\id \neq g \in G$, there exists a lift to an automorphism of $\widehat{\PP}_0$ which restricts to an automorphism of $X$. By definition, the automorphism permutes some of the $\PP^n$s. Thus, the ideal generated by the $(1,\ldots,1)$-forms defining $X$ must contain a form which is invariant under this permutation  which is impossible for a general choice of the defining equations. Consequently, $G = \{\id\}$. 
	
	Thus, $\Aut(X)$ can be identified with a finite subgroup of $PGL_{n+1}(\CC)^m$ and therefore it acts trivially on $\MovE(X)$. 
\end{proof}

When $n$ and $m$ are sufficiently large, we obtain the following stronger result:

\begin{proposition}\label{prop_AutTrivial}
	Assume that $n$ and $m$ are greater or equal than $3$. Then for a very general $X$, the group $\Aut(X)$ is trivial.
\end{proposition}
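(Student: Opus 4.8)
The plan is to realize $\Aut(X)$ as the stabilizer of a point in a Grassmannian under a linear group action, and then to show by a dimension count that this stabilizer is trivial for a very general choice of defining data. Set $T := H^0(\widehat{\PP}_0, \O(1,\ldots,1)) \cong (\CC^{n+1})^{\otimes m}$, of dimension $N := (n+1)^m$, and let $V \subset T$ be the $(n+1)$-dimensional subspace spanned by the forms $D_1,\ldots,D_{n+1}$ cutting out $X$; this gives a point $[V]$ in the Grassmannian $M := \mathrm{Gr}(n+1, T)$, which on a dense open subset is the parameter space for the construction. The group $G := \PGL_{n+1}(\CC)^m$ acts on $T$ through the tensor product representation, and this action is faithful since $g_1\otimes\cdots\otimes g_m$ is scalar only when each $g_k$ is; hence $G$ acts on $M$. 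By Proposition \ref{prop_AutActTrivial} and the preceding remark, $\Aut(X)$ is finite, is contained in $\PGL_{n+1}(\CC)^m$ (its image in $\mathcal{S}_m$ being trivial), and preserves the degree $(1,\ldots,1)$ part of the ideal of $X$, namely $V$. Thus $\Aut(X) = \mathrm{Stab}_G([V])$, and it suffices to prove that this stabilizer is trivial for very general $[V]$.

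Next I would show that the bad locus $B := \{[V] \in M : \mathrm{Stab}_G([V]) \neq \{1\}\}$ is contained in a countable union of proper closed subvarieties of $M$. Because $\Aut(X)$ is finite, any nontrivial stabilizing element may be taken of finite order, hence in characteristic $0$ diagonalizable and conjugate to a diagonal $d = (d_1,\ldots,d_m)$ whose factors are diagonal with root-of-unity entries. Using $\mathrm{Fix}(hdh^{-1}) = h\cdot \mathrm{Fix}(d)$, where $\mathrm{Fix}(d) := \{[V] : dV = V\}$, we obtain $B \subseteq \bigcup_{[d]} G\cdot \mathrm{Fix}(d)$, the union running over the countably many conjugacy classes of finite-order elements $d \neq 1$. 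It therefore remains to bound $\dim \mathrm{Fix}(d)$.

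The estimate of $\dim \mathrm{Fix}(d)$ is the technical heart, and I expect it to be the main obstacle. Writing the eigenspace decomposition $T = \bigoplus_\chi T_\chi$ of $d$, a $d$-invariant $(n+1)$-subspace is a direct sum of subspaces of the $T_\chi$, so $\dim \mathrm{Fix}(d) = \max \sum_\chi a_\chi(\dim T_\chi - a_\chi)$ over distributions $\sum_\chi a_\chi = n+1$. The key claim is that this is largest when $d$ is closest to the identity: when a single factor $d_k$ has one eigenvalue of multiplicity $n$ and another of multiplicity $1$, and the remaining factors are trivial. Then $T$ splits as $T_1 \oplus T_2$ with $\dim T_1 = nP$ and $\dim T_2 = P$ where $P := (n+1)^{m-1}$, and for $n \geq 3$ the optimum places all of $V$ in the larger eigenspace, giving $\dim\mathrm{Fix}(d) = (n+1)(nP - (n+1))$. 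The delicate point is to verify that no $d \neq 1$ produces a dominant eigenspace larger than $nP = \tfrac{n}{n+1}N$: for a single nontrivial factor the dominant eigenspace has dimension $(\text{max multiplicity})\cdot P \leq nP$, and activating further factors only refines eigenvalues into products and shrinks the dominant eigenspace, which I would confirm through the tensor-product multiplicity estimate. Granting this, $\dim M - \max_{d\neq 1}\dim\mathrm{Fix}(d) = (n+1)(N - nP) = N$.

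Finally I would close the count: each $G\cdot\mathrm{Fix}(d)$ has dimension at most $\dim G + \dim\mathrm{Fix}(d) \leq \dim G + (\dim M - N)$, so it is a proper subvariety precisely when $\dim G = m\big((n+1)^2 - 1\big) < N = (n+1)^m$. This holds for all $n \geq 3$ and $m \geq 3$: the smallest case $n = m = 3$ reads $45 < 64$, and in general $(n+1)^{m-2} > m$ gives $(n+1)^m > m(n+1)^2 > m\big((n+1)^2-1\big)$. Hence $B$ is a countable union of proper closed subvarieties, so a very general $[V]$ lies outside $B$ and $\Aut(X) = \mathrm{Stab}_G([V]) = \{1\}$. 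Of the four steps, the eigenspace-dimension bound of the third paragraph is the one requiring genuine care, whereas the final numerical inequality is elementary.
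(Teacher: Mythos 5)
Your approach is correct, and it is a genuinely different route from the paper's. The paper does not work in the Grassmannian at all: it extends a putative automorphism $g$ of $X$ (diagonalized, with root-of-unity weights $c_{i,j}$) to an automorphism $\tilde g$ of the $(m+1)$-fold product $\PP$ fixing the auxiliary form $F=\sum_j x^0_j f_j$, and then compares coefficients to conclude that an invariant $F$ can involve at most $n(n+1)^m$ of the $(n+1)^{m+1}$ monomials; the bad locus is covered by the $\PGL_{n+1}(\CC)^{m+1}$-orbits of these coordinate subspaces, and the codimension count $(n+1)^m-(m+1)[(n+1)^2-1]\geq 4$ finishes. Your argument stays with the subspace $V\subset T$ itself and stratifies by conjugacy classes of finite-order $d\in\PGL_{n+1}(\CC)^m$, bounding fixed loci in $\mathrm{Gr}(n+1,T)$. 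The combinatorial cores are parallel (the paper's monomial count is precisely a $1$-eigenspace bound on $(\CC^{n+1})^{\otimes(m+1)}$, the analogue of your multiplicity bound on $(\CC^{n+1})^{\otimes m}$), but your version avoids introducing $F$ and the auxiliary matrix $M$, carries only $\PGL_{n+1}(\CC)^m$ rather than $\PGL_{n+1}(\CC)^{m+1}$ plus a choice of frame, and consequently yields a slightly larger codimension ($(n+1)^m-m[(n+1)^2-1]=19$ at $n=m=3$, versus the paper's $4$). Two reassurances on the points you flag: the multiplicity estimate is easier than you fear — fix a factor $k_0$ on which $d$ is non-scalar; for any target eigenvalue $v$ of $d$ on $T$ and any choice of eigenvalues in the remaining factors, at most one eigenvalue of $d_{k_0}$ completes the product to $v$, so every eigenspace of $d$ in $T$ has dimension at most $n(n+1)^{m-1}=nP$. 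Moreover, you never need the sharp statement that the maximum of $\dim\mathrm{Fix}(d)$ occurs at the $(n,1)$ single-factor case: the crude bound $\dim\mathrm{Fix}(d)\le (n+1)nP-(n+1)$ (using $\sum_\chi a_\chi^2\ge n+1$) gives $\dim M-\dim\mathrm{Fix}(d)\ge N-n(n+1)$, and the inequality $m[(n+1)^2-1]+n(n+1)<(n+1)^m$ still holds for all $n,m\ge 3$ (it reads $57<64$ at $n=m=3$). The only hypothesis worth recording explicitly is that $V$ equals the full degree-$(1,\ldots,1)$ part of the ideal of $X$ (Koszul resolution), so that $\Aut(X)\subseteq\mathrm{Stab}_G([V])$; this inclusion, not the equality you assert, is all that is needed.
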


\begin{proof}
	
	From the proof of Proposition \ref{prop_AutActTrivial} we have that $\Aut(X)$ can be identified with a finite subgroup of $PGL_{n+1}(\CC)^m$. Let $g \in PGL_{n+1}(\CC)^m$ be a non-trivial element of finite order which induces an automorphism of $X$. Up to conjugacy, the co-action of $g$ is given by 
	\[
	g^*(x^i_j) = c_{i,j} x^i_j
	\]
	where  each $c_{i,j}$ for $i=1,\ldots,m$ and $j=0,\ldots,n$ is a root of unity.
	Let $f_0,\ldots,f_n$ be the $(1,\ldots,1)$-forms defining $X \subset \widehat{\PP}_0$. As $X$ is invariant under $g$ and all defining equations $f_i$ have the same multidegree, there exists a matrix $M = (m_{i,j}) \in GL_{n+1}(\CC)$ of finite order such that 
	\[
	M  \begin{pmatrix}
	g^*f_0  \\ 
	\vdots \\
	g^*f_n 
	\end{pmatrix} = 
	\begin{pmatrix}
	f_0  \\ 
	\vdots \\
	f_n 
	\end{pmatrix}
	\]
	with $g^*f_i := f_i \circ g$. Moreover, we may assume that the generating equations $f_i$ are chosen so that $M$ is a diagonal matrix whose diagonal elements $m_{i,i}$ are roots of unity. 
	Now, we define the $(1,\ldots,1)$-form  $F$  in $\PP$ as before 
	\[
	F = \sum_{j=0}^n x^0_j f_j.
	\]
 
	Let $\tilde{g}$ be an element in $PGL_{n+1}(\CC)^{m+1} = PGL_{n+1}(\CC) \times  PGL_{n+1}(\CC)^{m} $ which acts via $M^T$ on $\PP^n_0$ and via $g$ on $\widehat{\PP}_0$. Then, by the definition of $M$ and $F$, we have
	\begin{equation}\label{eq_Faut}
	F \circ \tilde{g} = \sum_{j=0}^n (   m_{j,j}x^0_j) g^\ast f_j
	= \sum_{j=0}^n  x^0_j (m_{j,j} g^\ast f_j)= \sum_{j=0}^n x^0_j f_j = F.
	\end{equation}
	A monomial 
	\[ 	x^0_{r_0}x^1_{r_1} x^2_{r_2}\cdots x^m_{r_m}\]  of
	$F$ is mapped to 
	\[ m_{r_0,r_0}	c_{1,r_1}\cdots c_{m,r_m} \cdot x^0_{r_0}
	x^1_{r_1} x^2_{r_2}\cdots x^m_{r_m}
	\]
	where $0 \leq r_i \leq n$ for all $i=0,\ldots,m$. If we set $c_{0,r_0}= m_{r_0,r_0}$, then comparing coefficients of $F$ and $F \circ \tilde{g}$ in \eqref{eq_Faut} implies that 
	\begin{equation}\label{productroots}
	c_{0,r_0}c_{1,r_1}\cdots c_{m,r_m} =1.
	\end{equation}
	for any tuple $(r_0,\ldots,r_m)$ describing a monomial of $F$. 
	As $g$ is not the identity, we assume without loss of generality that $g^*$ acts non-trivially on the last component of $\PP$. 
	If for a fixed choice $(r_0,\ldots,r_{m-1})$, every monomial with index $(r_0,\ldots,r_{m-1},j)$ for $j=0,\ldots,n$ occurs in $F$ with a non-zero coefficient, then \eqref{productroots} implies that 
	\[
	c_{m,0} = c_{m,1} = \ldots = c_{m,n}
	\]
	which is a contradiction. 
	Thus, if  $F$ admits a non-trivial automorphism, the number of monomial factors of $F$ is at most $n(n+1)^m$. Thus, the subset of $(1,\ldots,1)$- forms $F$  which admit a non-trivial automorphism is a countable union of varieties of dimension at most
	
	\[
	n\cdot(n+1)^m -1 + \dim PGL_{n+1}(\CC)^{m+1}
	= n\cdot(n+1)^m -1  + (m+1)\cdot((n+1)^2 - 1).
	\]
	As the space of all possible $(1,\ldots,1)$-forms in $\PP$ has dimension $(n+1)^{m+1} -1 $, the codimension of this subvariety is
	\[
	(n+1)^{m+1}  -1 - [n\cdot(n+1)^m -1  + (m+1)\cdot((n+1)^2 - 1) ] = 
	(n+1)^m - (m+1)[(n+1)^2-1].
	\]
	If $n \geq 3$ and $m\geq 3$, the expression above is a strictly increasing function in either $n$ or $m$. Given that for $n = m = 3$ the expression is equal to 4, we have that the codimension is $\geq 4$. Consequently, removing a countable union of subsets of codimension $\geq 4$, the very general $F$ (and hence $X$) has a trivial automorphism group. 
\end{proof}

\begin{remark}
	The previous argument fails when at least one of $m$ and $n$ is strictly smaller than 3. Example \ref{examp_trivialActing} provides an example in the case of $n=4$ and $m=2$ where the group $\Aut(X)$ is not trivial.
\end{remark}

\begin{proposition}\label{prop_BirFree} For a general X, we have
$\Bir(X) = \Aut(X) \cdot \langle \psi_{i,j} \mid 1 \leq i < j \leq m \rangle$. 
\end{proposition}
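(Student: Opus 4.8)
The plan is to study the induced action of $\Bir(X)$ on $\MovE(X)$ and exploit that, since $X$ is Calabi--Yau, every $g\in\Bir(X)$ is an isomorphism in codimension one; it therefore neither contracts nor extracts a divisor, so $g^*$ preserves $\MovE(X)$ and carries the chamber $\Nef^e(X_0)=\Nef^e(X)$ to the chamber $g^*\Nef^e(X_0)$ of the decomposition of $\MovE(X)$ into the nef cones of the marked minimal models (the decomposition supplied by the theory behind Theorem \ref{theorem_wang}). Two preliminary remarks will frame the argument: first, by Proposition \ref{prop_AutActTrivial} the group $\Aut(X)$ acts trivially on $N^1(X)_\RR$, and conversely any birational self-map that is an isomorphism in codimension one and fixes $\Nef^e(X_0)$ must preserve the ample cone, hence be biregular; thus $\Aut(X)$ is exactly the kernel of $\Bir(X)\to\GL(N^1(X)_\RR)$. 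Second, by the description in Subsection \ref{subsection:Construction} the $m$ facets of $\Nef^e(X_i)$ correspond precisely to the flops $\varphi_{i,j}$ to the adjacent models $X_j$, $j\neq i$, so every wall-crossing in $\MovE(X)$ is one of the elementary flops $\varphi_{k,l}$.

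Next I would realize $g^*\Nef^e(X_0)$ by a chain of flops. Passing from $\Nef^e(X_0)$ to $g^*\Nef^e(X_0)$ across successive codimension-one walls expresses $g^*\Nef^e(X_0)=\Phi^*\Nef^e(X_{i_N})$ for a composition $\Phi\colon X_0\dashrightarrow X_{i_N}$ of elementary flops along a path $0=i_0,i_1,\ldots,i_N$ in the \emph{flop graph} on the vertex set $\{0,\ldots,m\}$. Since every pair $X_k,X_l$ is joined by $\varphi_{k,l}$, this graph is the complete graph $K_{m+1}$. Because $g$ maps $X$ to itself, the composite $\theta:=\Phi\circ g^{-1}\colon X_0\dashrightarrow X_{i_N}$ is an isomorphism in codimension one sending the ample cone to the ample cone, hence a biregular isomorphism $X_0\xrightarrow{\sim}X_{i_N}$; as the minimal models $X_0,\ldots,X_m$ are pairwise non-isomorphic for general $X$, this forces $i_N=0$ and $\theta\in\Aut(X)$.

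It then remains to decompose the closed path $\Phi$ into the generators $\psi_{i,j}$. Choosing the spanning tree of $K_{m+1}$ consisting of the edges $\{0,i\}$, $i=1,\ldots,m$, the loops based at $0$ form a free group whose standard free generators are the $\binom{m}{2}$ lollipop loops $0\to i\to j\to 0$, that is $\varphi_{j,0}\circ\varphi_{i,j}\circ\varphi_{0,i}=\psi_{i,j}$ for $1\le i<j\le m$ (whose linear action is $t_it_jt_i\cdot\Per_{(i,j)}$ by Proposition \ref{propBirAutMatrix}). Since $\varphi_{l,k}=\varphi_{k,l}^{-1}$, every closed composition of flops based at $0$ collapses, after cancelling the backtracking pairs $\varphi_{0,i}\circ\varphi_{0,i}^{-1}=\id$, to an honest word in the $\psi_{i,j}^{\pm1}$; hence $\Phi=\gamma$ in $\Bir(X)$ for some $\gamma\in\langle\psi_{i,j}\mid 1\le i<j\le m\rangle$. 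Combining the steps, $g=\theta^{-1}\circ\Phi=\theta^{-1}\gamma$ with $\theta^{-1}\in\Aut(X)$, so $g\in\Aut(X)\cdot\langle\psi_{i,j}\rangle$; the reverse inclusion is immediate, giving the asserted equality.

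The main obstacle is the middle step: one must guarantee both that the chamber $g^*\Nef^e(X_0)$ is genuinely reached by the specific elementary flops $\varphi_{k,l}$ (producing an actual path in $K_{m+1}$) and, crucially, that this path returns to the vertex $0$. The latter rests on the pairwise non-isomorphism of $X_0,\ldots,X_m$ for a general choice of $F$; were two distinct $X_i$ isomorphic, a flop path could terminate at a different vertex and the reduction to loops based at $0$ would break down. I would therefore isolate this non-isomorphism as a separate lemma for general $F$ — for instance by comparing the determinantal data $\det(A_{i,j})$ attached to the different $X_i$ — and prove it before running the graph-theoretic cancellation, which is otherwise routine.
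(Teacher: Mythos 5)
Your proposal arrives at the correct statement, and its second half — the spanning-tree/lollipop regrouping — is the paper's own argument in different clothing: the paper rewrites the chain of flops $\varphi_{i_s,0}\circ\cdots\circ\varphi_{i_0,i_1}\circ\varphi_{0,i_0}$ as a product of the $\psi_{i_k,i_{k+1}}$ by inserting identities of the form $\varphi_{0,i}\circ\varphi_{i,0}$, which is exactly your reduction of a closed path in $K_{m+1}$ to lollipop loops based at $0$. The genuine difference is how the chain of flops is produced. The paper gets it in one stroke: by \cite[Theorem 1]{Kaw08}, any birational self-map of $X$ decomposes, up to an automorphism, into a sequence of flops, and by \cite[Corollary 7.6]{Wang21} every flop of a minimal model $X_i$ is one of the $\varphi_{i,j}$, so the chain starts at $X_0$ and ends at a model identified with $X_0$. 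You instead rebuild the chain by crossing walls of the chamber decomposition of $\MovE(X)$, and then close the loop using pairwise non-isomorphism of the minimal models.

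Two caveats on your route. First, the step you describe as supplied by ``the theory behind Theorem \ref{theorem_wang}'' — that $g^*\Nef^e(X_0)$ is actually reached from $\Nef^e(X_0)$ by finitely many crossings of codimension-one walls, each crossing being one of the elementary flops — is not a formal consequence of the chamber decomposition $\MovE(X)=\bigcup_{(X',\alpha)}\alpha^*\Nef^e(X')$ alone; it requires local finiteness of that chamber structure in the interior of the movable cone, and this is precisely the content that the paper outsources to Kawamata's theorem. So you should either cite \cite{Kaw08} here (at which point your proof essentially collapses into the paper's) or supply the local-finiteness argument; as written, this is the thinnest point, and notably it is the half of your self-identified ``main obstacle'' that you do not address. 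Second, the separate lemma you propose to prove (pairwise non-isomorphism of $X_0,\ldots,X_m$ via the determinantal data) is unnecessary: for general $X$ this is already part of Wang's result as quoted in Theorem \ref{theorem_wang} — there are \emph{exactly} $m+1$ minimal models up to isomorphism — and the paper's proof relies on the same fact, implicitly, when the last flop of the chain is taken to land on $X_0$. Your instinct that loop-closure needs this input is correct; the input is just already on the shelf.
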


\begin{proof}
Let $\alpha \in \Bir(X)$. By {\cite[Theorem 1]{Kaw08}}
every birational map between Calabi-Yau manifolds can be decomposed into a sequence of flops, up to an automorphism. 
By \cite[Corollary 7.6]{Wang21} any flop of a minimal model $X_i$ is among the maps $\varphi_{i,j}\colon X_i \dashrightarrow X_j$ described in Subsection \ref{subsection:Construction}. Hence we can write \[ \alpha = \varphi_{i_s,0}\circ \varphi_{i_{s-1},i_{s}}\circ \cdots \circ \varphi_{i_0,i_1} \circ \varphi_{0,i_0}  \] and by introducing elements of the form $\varphi_{0,i}\circ \varphi_{i,0}$ we obtain that \[ \alpha = \psi_{i_{s-1},i_s}\circ \ldots \circ \psi_{i_1,i_2}\circ \psi_{i_0,i_1}. \] Therefore, up to an automorphism of $X$ \[ \Bir(X) = \langle \psi_{i,j} \mid 1 \leq i < j \leq m \rangle. \qedhere \] 
\end{proof}

From Proposition \ref{prop_diag} we know that when $n \geq 2$, the generators $\psi_{i,j}^*$ have infinite order. We prove, even more, that the group generated by the $\psi_{i,j}^*$ is free of rank ${m \choose 2}$.

\begin{theorem}\label{thm_BirFree}
Assume $n \geq 2$. For a general $X$, we have 
\[
\Bir(X)   \cong \underbrace{\ \ZZ \ast \cdots \ast \ZZ\ }_{{m\choose 2} \text{ terms}}\] up to the finite kernel $\Aut(X)$.
\end{theorem}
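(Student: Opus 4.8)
The plan is to reduce the statement to a freeness assertion about the matrix group generated by the $\psi_{i,j}^*$, and then to establish freeness by analysing normal forms in the universal Coxeter group $W$. First, by Proposition~\ref{prop_BirFree} we have $\Bir(X) = \Aut(X)\cdot\langle \psi_{i,j}\mid 1\le i<j\le m\rangle$, and by Proposition~\ref{prop_AutActTrivial} the subgroup $\Aut(X)$ lies in the kernel of the representation $\Bir(X)\to \GL(N^1(X)_\RR)$. Conversely, a birational automorphism of the Calabi--Yau manifold $X$ is an isomorphism in codimension one, and one acting trivially on $N^1(X)_\RR$ preserves an ample class, hence is a genuine automorphism; thus the kernel equals the finite group $\Aut(X)$. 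Therefore $\Bir(X)/\Aut(X)$ is isomorphic to the image group $G:=\langle \psi_{i,j}^*\mid i<j\rangle\subset \GL(N^1(X)_\RR)$, and it suffices to prove that $G$ is free of rank ${m\choose 2}$ on the generators $\psi_{i,j}^*$. By Proposition~\ref{prop_diag} each $\psi_{i,j}^*$ has infinite order for $n\ge 2$, so each generates a copy of $\ZZ$; the content of the theorem is the absence of relations among the generators.

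By Proposition~\ref{propBirAutMatrix} we have $\psi_{i,j}^* = t_it_jt_i\cdot\Per_{(i,j)}$, so $G$ sits inside $\langle t_1,\dots,t_m,\Per_\sigma\rangle$. Proposition~\ref{prop_faithful} identifies $\langle t_1,\dots,t_m\rangle$ with the universal Coxeter group $W\cong\ZZ/2\ZZ\ast\cdots\ast\ZZ/2\ZZ$, and Lemma~\ref{lemmaSwapPerm} shows that conjugation by $\Per_\sigma$ realises the permutation action of $\mathcal S_m$ on the $t_i$. Hence this larger group is the semidirect product $W\rtimes\mathcal S_m$, and under this identification $\psi_{i,j}^*$ corresponds to $g_{i,j}=(s_is_js_i,(i,j))$. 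Since $m_{ij}=\infty$ for $n\ge 2$, a direct computation shows that the powers $g_{i,j}^k$ have $W$-component an alternating reduced word in $\{s_i,s_j\}$ of length $3|k|\ge 3$, and $\mathcal S_m$-component $\mathrm{id}$ or $(i,j)$.

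I would then prove that $\langle g_{i,j}\rangle$ is free by a normal-form argument in the free product $W$. Take a nonempty reduced word $\gamma=g_{i_1,j_1}^{k_1}\cdots g_{i_r,j_r}^{k_r}$ (consecutive index pairs distinct, $k_\ell\neq0$) and write $\gamma=(w,\sigma)$ with $\sigma=\sigma_1\cdots\sigma_r$, where $\sigma_\ell\in\{\mathrm{id},(i_\ell,j_\ell)\}$ is the $\mathcal S_m$-part of $g_{i_\ell,j_\ell}^{k_\ell}$. Its $W$-component is the concatenation $w=B_1\cdots B_r$, where $B_\ell$ is the $\ell$-th alternating block relabelled by the accumulated permutation $\pi_{\ell-1}=\sigma_1\cdots\sigma_{\ell-1}$; thus $B_\ell$ is an alternating reduced word of length $\ge3$ in a two-letter alphabet $A_\ell$. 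The key point is that $A_\ell\neq A_{\ell+1}$: the twisting transposition $\sigma_\ell$ fixes its own pair setwise, so it cannot carry $\{i_{\ell+1},j_{\ell+1}\}$ onto $\{i_\ell,j_\ell\}$ without contradicting reducedness. Since consecutive alphabets then share at most one letter and each block has length $\ge3$, at most one matched pair can cancel at each of the $r-1$ junctions; hence the reduced length of $w$ is at least $3r-2(r-1)=r+2>0$, so $w\neq 1$ and $\gamma\neq(1,\mathrm{id})$ in $W\rtimes\mathcal S_m$.

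It remains to transfer nontriviality in $W\rtimes\mathcal S_m$ to nontriviality of the matrix $M_w\Per_\sigma$; equivalently, to show that the only permutation matrix in $\langle t_1,\dots,t_m\rangle$ is the identity. Reducing the entries of $t_k$ modulo $n$ gives $t_k\equiv \mathrm{diag}(1,\dots,1,-1,1,\dots,1)$, so $M_w$ is congruent to a diagonal $\pm1$ matrix modulo $n$; since $1\not\equiv0\bmod n$ for $n\ge2$, a permutation matrix is congruent to a diagonal one modulo $n$ only when it is the identity, which forces $w=1$ by faithfulness of $W$. Consequently $\gamma=(w,\sigma)\neq(1,\mathrm{id})$ maps to a nonidentity element of $\GL(N^1(X)_\RR)$, proving $G\cong\ZZ\ast\cdots\ast\ZZ$ with ${m\choose 2}$ factors. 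I expect the main obstacle to be the bookkeeping in the normal-form step — tracking how the accumulated permutations relabel the alphabets $A_\ell$ and verifying that no extra cancellation occurs across junctions — together with the clean verification that $W\rtimes\mathcal S_m\to\GL(N^1(X)_\RR)$ is faithful. For $n\ge3$ one could instead run Tits' ping-pong lemma directly on $\PP(N^1(X)_\RR)$ using the attracting and repelling eigendirections of $\psi_{i,j}^*$ provided by Proposition~\ref{prop_diag}, but the normal-form argument has the advantage of being uniform in $n\ge2$ and of sidestepping the non-diagonalisable case $n=2$.
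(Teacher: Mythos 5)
Your proof is correct, and its overall route coincides with the paper's: reduce via Propositions \ref{prop_BirFree} and \ref{prop_AutActTrivial} to freeness of the matrix group $\langle\psi_{i,j}^*\rangle$, write $\psi_{i,j}^*=t_it_jt_i\cdot\Per_{(i,j)}$ (Proposition \ref{propBirAutMatrix}), push permutations to the right with Lemma \ref{lemmaSwapPerm}, and conclude by word combinatorics in the universal Coxeter group together with faithfulness of its geometric representation (Proposition \ref{prop_faithful}). The differences are in the combinatorial core and in one point of rigor. The paper's Lemma \ref{lemma_ReducedWord} proves by induction that the reduced $t$-word of a reduced $\psi$-word begins with $t_{i_1}t_{j_1}$, so it has $t$-length at least $2$ and cannot be the identity; you instead collect powers into alternating blocks of length $3|k_\ell|$, show consecutive blocks use distinct two-letter alphabets (same mechanism: the twisting transposition preserves its own pair), and count junction cancellations to bound the reduced length below by $r+2$ --- a global rather than inductive bookkeeping of the same cancellation phenomenon. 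The genuinely new ingredient in your write-up is the reduction mod $n$ showing that the only permutation matrix in $\langle t_1,\ldots,t_m\rangle$ is the identity. The paper needs exactly this when it deduces $\Per_{\sigma'\sigma^{-1}}=\id$ from an equality between a $t$-word and a permutation matrix, but justifies it only by the abstract isomorphism type of the $t$-group, which by itself does not rule out the $t$-group meeting the permutation matrices nontrivially inside $\GL(N^1(X)_\RR)$; your mod-$n$ argument (or, alternatively, noting that a permutation matrix preserves the fundamental domain $D$ and invoking the fundamental-domain property of the Tits cone) closes this cleanly and uniformly for all $n\geq 2$. Likewise, your identification of $\Aut(X)$ with the full kernel of $\Bir(X)\to\GL(N^1(X)_\RR)$ makes precise what ``up to the finite kernel'' means, a point the paper leaves implicit.
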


We need to prove that there are no non-trivial relations between the generators $\psi_{i,j}$ of $\Bir(X)$. Note that for the following arguments we also need to assume that $n \geq 2$.  
\begin{lemma}
Let $\ww = \psi^*_{i_1,j_1}\psi^*_{i_2,j_2}\cdots \psi^*_{i_s,j_s}$ be a freely reduced word, that means that there are no inverses next to each other. Write $\ww = t_{k_1}t_{k_2}\cdots t_{k_r}\Per_\sigma$ in terms of the elements $t_i$ associated to the Coxeter system $(W,S)_Q$, where $t_{k_1}t_{k_2}\cdots t_{k_r}$  is also the freely reduced word. Then $k_1 = i_1$ and $k_2 = j_1$.
\label{lemma_ReducedWord}
\end{lemma}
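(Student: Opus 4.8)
The plan is to expand $\ww$ into a word in the $t_i$ alone (times a single permutation), and then control the free reduction of that word in $W\cong \ZZ/2\ZZ \ast\cdots\ast\ZZ/2\ZZ$. Writing $\sigma_\ell = (i_\ell,j_\ell)$ and using Proposition \ref{propBirAutMatrix} in the form $\psi^*_{i_\ell,j_\ell} = t_{i_\ell}t_{j_\ell}t_{i_\ell}\cdot\Per_{\sigma_\ell}$, I would repeatedly apply Lemma \ref{lemmaSwapPerm} to push every permutation matrix to the right. Setting $\tau_\ell := \sigma_1\sigma_2\cdots\sigma_{\ell-1}$ (so $\tau_1 = \id$) and $a_\ell := \tau_\ell(i_\ell)$, $b_\ell := \tau_\ell(j_\ell)$, this yields
\[
\ww = (t_{a_1}t_{b_1}t_{a_1})(t_{a_2}t_{b_2}t_{a_2})\cdots(t_{a_s}t_{b_s}t_{a_s})\cdot \Per_{\tau_{s+1}},
\]
where crucially $a_1 = i_1$ and $b_1 = j_1$ since $\tau_1=\id$. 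Each block $t_{a_\ell}t_{b_\ell}t_{a_\ell}$ is internally reduced because $a_\ell\neq b_\ell$ (as $i_\ell\neq j_\ell$ and $\tau_\ell$ is a bijection). It therefore remains to show that the free reduction at the junctions between blocks never erodes the first two letters $t_{a_1}t_{b_1}$.

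Next I would translate the hypothesis that $\ww$ is \emph{freely reduced} in the $\psi^*$ into a statement about the blocks. Since $\psi^*_{i,j} = (\psi^*_{j,i})^{-1}$, being freely reduced means $(i_{\ell+1},j_{\ell+1})\neq(j_\ell,i_\ell)$ for every $\ell$. A short computation with the involution $\sigma_\ell=(i_\ell,j_\ell)$ gives the two equivalences
\[
a_\ell = a_{\ell+1} \iff i_{\ell+1}=j_\ell,\qquad b_\ell = b_{\ell+1}\iff j_{\ell+1}=i_\ell,
\]
using $a_{\ell+1}=\tau_\ell\sigma_\ell(i_{\ell+1})$ and $b_{\ell+1}=\tau_\ell\sigma_\ell(j_{\ell+1})$ together with injectivity of $\tau_\ell$. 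Consequently $a_\ell=a_{\ell+1}$ \emph{and} $b_\ell=b_{\ell+1}$ would force $(i_{\ell+1},j_{\ell+1})=(j_\ell,i_\ell)$, which the freely reduced hypothesis forbids. Thus two consecutive blocks can never fully annihilate one another.

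Finally I would run an induction on $\ell$ on the reduced word $R_\ell$ of $B_1\cdots B_\ell$, carrying the invariants that $R_\ell$ begins with $t_{a_1}t_{b_1}$, ends with $t_{b_\ell}t_{a_\ell}$, and has length at least $3$ (verified directly for $R_1 = t_{a_1}t_{b_1}t_{a_1}$). When appending $B_{\ell+1}$, either $a_\ell\neq a_{\ell+1}$ and no cancellation occurs, or $a_\ell=a_{\ell+1}$ and exactly one cancellation $t_{a_\ell}t_{a_{\ell+1}}=\id$ occurs; in the latter case the newly adjacent letters are $t_{b_\ell}$ and $t_{b_{\ell+1}}$, which differ by the previous paragraph, so the reduction stops after a single step. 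In both cases the length does not decrease, so removing at most one letter from the right of $R_\ell$ (of length $\geq 3$) leaves its first two letters untouched, preserving all invariants. Hence $R_s$ begins with $t_{a_1}t_{b_1}=t_{i_1}t_{j_1}$, giving $k_1=i_1$ and $k_2=j_1$. The main obstacle is exactly ruling out a cascade of cancellations back into the prefix; this is precisely what the freely reduced hypothesis prevents, by guaranteeing at most one cancellation per junction.
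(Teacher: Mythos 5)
Your proof is correct and follows essentially the same route as the paper's: both expand each $\psi^*_{i,j}$ as $t_it_jt_i\cdot\Per_{(i,j)}$ via Proposition \ref{propBirAutMatrix}, push the permutation matrices through with Lemma \ref{lemmaSwapPerm}, and use the freely-reduced hypothesis (one cannot have $i_{\ell+1}=j_\ell$ and $j_{\ell+1}=i_\ell$ simultaneously) to rule out a double cancellation at each junction, so at most one letter cancels and the prefix $t_{i_1}t_{j_1}$ survives. The only difference is bookkeeping: the paper inducts by peeling $\psi^*_{i_1,j_1}$ off the left and applying the lemma itself to the tail, whereas you accumulate reduced prefixes left-to-right, which is why you must carry the additional suffix and length-$\geq 3$ invariants that the paper's formulation avoids.
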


\begin{proof}
Let $\ww = \psi^*_{i_1,j_1}\psi^*_{i_2,j_2}\cdots \psi^*_{i_s,j_s} = t_{k_1}t_{k_2}\cdots t_{k_r}\cdot\Per_\sigma$ be freely reduced. Define $\ell_\psi(\ww):= s$ and $\ell_t(\ww) := r$ the length of the word $\ww$ in terms of the generators $\psi_{i,j}^*$ and in terms of the elements $t_i$. Notice that being freely reduced means that for all $1 \leq l < s$, we cannot have simultaneously $i_{l+1} = j_l$ and $j_{l+1} = i_l$.

We will prove the lemma by induction on length $\ell_\psi(\ww)$. Let $\psi^*_{i_1,j_1} = t_{i_1}t_{j_1}t_{i_1}\Per_{\sigma_1}$ and $\psi^*_{i_2,j_2} = t_{i_2}t_{j_2}t_{i_2}\Per_{\sigma_2}$ be two generators, where $\sigma_1 = (i_1,j_1)$ and $\sigma_2 = (i_2,j_2)$. Then 
\[  
\psi^*_{i_1,j_1}\psi^*_{i_2,j_2} = t_{i_1}t_{j_1}t_{i_1}t_{\sigma_1(i_2)}t_{\sigma_1(j_2)}t_{\sigma_1(i_2)}\Per_{\sigma_1\sigma_2}.
\]

We notice that if $i_2 \neq j_1$, then $\ell_t(\psi^*_{i_1,j_1}\psi^*_{i_2,j_2}) = 6$, which means in particular that the first three factors of $\psi^*_{i_1,j_1}\psi^*_{i_2,j_2}$ when written in terms of the $t_i$'s are $t_{i_1}t_{j_1}t_{i_1}$. In the case when $i_2 = j_1$ we have that 
\[
\psi^*_{i_1,j_1}\psi^*_{i_2,j_2} =  t_{i_1}t_{j_1}t_{j_2}t_{i_1}\cdot \Per_{\sigma_1\sigma_2}.
\] In both cases the first two factors are $t_{i_1}t_{j_1}$, so the statement holds. 

Assume that the statement is true for all words of length $s$ and suppose that $\ww$ has length $s+1$. Then 
\[ 
\ww = \psi_{i_1,j_1}\cdot \ww',
\] with $\ww' = \psi^*_{i_2,j_2}\cdots \psi^*_{i_s,j_s} = t_{i_2}t_{j_2}t_{k_3}\ldots t_{k_r}\cdot \Per_{\sigma}$. Then 
\[
\ww = t_{i_1}t_{j_1}t_{i_1}\Per_{(i_1,j_1)}t_{i_2}t_{j_2}t_{k_3}\ldots t_{k_r}\cdot \Per_{\sigma}.
\] Because we don't have simultaneously that $i_2 = j_1$ and $j_2 = i_1$, then
\[ 
\ww = \begin{cases}
t_{i_1}t_{j_1}t_{i_1}\cdots & \text{if } i_2 \neq j_1\\
\\
t_{i_1}t_{j_1}t_{j_2}\cdots & \text{if } i_2 = j_1
\end{cases}
\] and in both cases we have the required result.
\end{proof}
\begin{proof}[Proof of Theorem \ref{thm_BirFree}]
Let us assume that
\[ 
\psi^*_{i_1,j_1}\psi^*_{i_2,j_2}\cdots \psi^*_{i_s,j_s} = \psi^*_{i'_1,j'_1}\psi^*_{i'_2,j'_2}\cdots \psi^*_{i'_r,j'_r}
\] are two reduced words, and let $t_{k_1}t_{k_2}\cdots t_{k_{r'}}\Per_\sigma$ and $t_{k'_1}t_{k'_2}\cdots t_{k'_{s'}}\Per_{\sigma'}$ their associated reduced words in terms of the elements $t_i$ of the Coxeter system $(W,S)_Q$. Then
\[ 
\Per_{\sigma'\sigma^{-1}} = t_{k'_{s'}}\cdots t_{k'_2}t_{k'_1}t_{k_1}t_{k_2}\cdots t_{k_{r'}}
\] 

and because the group generated by the $t_i$'s is isomorphic to $\ZZ/2\ZZ * \ldots * \ZZ/2\ZZ$, we have that $\Per_{\sigma\sigma'} = \mathrm{id}$, $r' = s'$, and $k_l = k'_l$ for all $1 \leq l \leq s'$.

Let $\ww = \psi^*_{i_1,j_1}\psi^*_{i_2,j_2}\cdots \psi^*_{i_s,j_s}$ be a non-trivial freely reduced word such that $\ww = \mathrm{id}$. By Lemma \ref{lemma_ReducedWord} we have that $\ell_t(\ww) \geq 2$, but $\ell_t(\mathrm{id}) = 0$ which is a contradiction. Therefore there is no non-trivial relation between the $\psi_{i,j}^*$ , hence, modulo automorphisms, we have  
\[ 
\langle \psi_{i,j} \mid 1 \leq i < j \leq m \rangle \cong \bigast_{1\,\leq\, i< j\, \leq\, m} \langle \psi^*_{i,j} \rangle  \cong \underbrace{\ \ZZ \ast \cdots \ast \ZZ\ }_{{m\choose 2} \text{ terms}}.
\]
\end{proof}

In \cite[Theorem 1.3]{Wang21} it is shown that there is a fundamental domain $\Pi$ for the action of $\Bir(X)$ on $\MovE(X)$. Whereas the proof is non-constructive, we can now describe the fundamental domain $\Pi$ explicitly in the general case.

\begin{proposition}\label{prop_FundamentalDom} Let $X_0=X$ be general, and let 
	\[ \Pi = \Nef^e(X_0) \cup \bigcup_{i=1,\ldots,m} \varphi_{0,i}^*\Nef^e(X_i) \subset \MovE(X).\]
	Then $\Pi$ is a fundamental domain for the action of $\Bir(X)$ on $\MovE(X_0)$, that means
	\[ \MovE(X) = \bigcup_{g\in \Bir(X)} g^*\Pi
	\]
	and $\Int \Pi \cap \Int g^*\Pi = \emptyset$ unless $g^* = \id$. 
\end{proposition}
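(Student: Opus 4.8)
The plan is to combine the nef-chamber decomposition of the movable cone with the bookkeeping of marked minimal models. First I would recall the structural input behind Theorem \ref{theorem_wang}: by the theory of the movable cone for Calabi--Yau varieties (see \cite{Wang21}, building on \cite{Loo14}), the effective movable cone decomposes as $\MovE(X)=\bigcup_{(Y,f)} f^*\Nef^e(Y)$, where $(Y,f)$ ranges over the marked minimal models of $X$, i.e. birational maps $f\colon X\dashrightarrow Y$ that are isomorphisms in codimension one. These chambers $f^*\Nef^e(Y)$ have pairwise disjoint interiors; two markings define the same chamber precisely when they differ by an isomorphism $Y\to Y'$; and the decomposition is locally finite in the interior $\Mov(X)$ of the movable cone, the latter being a consequence of the cone conjecture proved by Wang. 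Since, by Theorem \ref{theorem_wang}, the only minimal models are the pairwise non-isomorphic $X_0,\dots,X_m$, every chamber is the chamber of some marking $f\colon X\dashrightarrow X_i$.

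For the covering $\MovE(X)=\bigcup_{g\in\Bir(X)} g^*\Pi$, I would argue as follows. Given a marking $f\colon X\dashrightarrow X_i$, the composite $g:=\varphi_{0,i}^{-1}\circ f$ (with the convention $\varphi_{0,0}=\id$) is a birational self-map of $X$, so $g\in\Bir(X)$ and $f=\varphi_{0,i}\circ g$. Pulling back gives $f^*\Nef^e(X_i)=g^*\varphi_{0,i}^*\Nef^e(X_i)\subseteq g^*\Pi$. Hence every chamber is contained in some translate $g^*\Pi$; since the chambers cover $\MovE(X)$ and each $g^*\Pi\subseteq\MovE(X)$, equality follows. (Proposition \ref{prop_BirFree} is the global counterpart of this observation, describing $\Bir(X)$ itself in terms of the flops $\varphi_{i,j}$.)

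It remains to show $\Int\Pi\cap\Int(g^*\Pi)=\emptyset$ unless $g^*=\id$. Suppose the intersection is nonempty; it is then an open subset of the interior of the movable cone, where the chamber decomposition is locally finite, so the union of all chamber walls is nowhere dense and I may pick a point $p$ in this intersection lying in the interior of a single chamber. Writing $\Pi=\bigcup_{a=0}^m C_a$ with $C_0=\Nef^e(X_0)$ and $C_a=\varphi_{0,a}^*\Nef^e(X_a)$, the point $p$ lies in $\Int C_a$ and in $\Int(g^*C_b)$ for some $a,b$; disjointness of chamber interiors forces $C_a=g^*C_b$. Now $C_a$ is the chamber of the marking $\varphi_{0,a}\colon X\dashrightarrow X_a$ and $g^*C_b$ is the chamber of $\varphi_{0,b}\circ g\colon X\dashrightarrow X_b$, so there is an isomorphism $\eta\colon X_a\to X_b$ with $\varphi_{0,b}\circ g=\eta\circ\varphi_{0,a}$. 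As $X_0,\dots,X_m$ are pairwise non-isomorphic this yields $a=b$, whence $\eta\in\Aut(X_a)$ and $g=\varphi_{0,a}^{-1}\circ\eta\circ\varphi_{0,a}$. Pulling back and using that $\Aut(X_a)$ acts trivially on $N^1(X_a)_\RR$ --- Proposition \ref{prop_AutActTrivial}, whose proof applies verbatim to each $X_a$ --- gives $g^*=\varphi_{0,a}^*\,\eta^*\,(\varphi_{0,a}^*)^{-1}=\id$.

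The main obstacle is the disjointness step, and two points there require care. The first is the passage to a generic interior point, which rests on local finiteness of the chamber fan in $\Mov(X)$; this is exactly where the validity of the cone conjecture for $X$ (Theorem \ref{theorem_wang}) enters, guaranteeing that chamber walls do not accumulate away from the boundary. The second is tracking markings closely enough that the chamber identity $C_a=g^*C_b$ yields both $a=b$ and, crucially, $g^*=\id$ rather than merely that $g$ is conjugate to an automorphism; the triviality of the $\Aut(X_a)$-action on N\'eron--Severi (Proposition \ref{prop_AutActTrivial}) is precisely what makes this upgrade possible.
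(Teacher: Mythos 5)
Your proof is correct and follows essentially the same route as the paper's: both obtain the covering from Wang's decomposition of $\MovE(X)$ into chambers $f^*\Nef^e(X_i)$ attached to marked minimal models (your shortcut $g=\varphi_{0,i}^{-1}\circ f$ replaces the paper's explicit rewriting of a flop sequence as $\varphi_{0,j}\circ\psi_{i_s,j}\circ\cdots\circ\psi_{i_0,i_1}$, but the content is the same), and both deduce disjointness of interiors from the fact that a birational map matching ample classes is an isomorphism together with the trivial action of automorphisms on $N^1$ (Proposition \ref{prop_AutActTrivial}). If anything, your disjointness step is slightly more careful than the paper's terse version, since you track the markings explicitly and invoke the pairwise non-isomorphism of $X_0,\ldots,X_m$ to rule out the case $a\neq b$ before applying Proposition \ref{prop_AutActTrivial}.
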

\begin{proof} 
	By \cite[Proposition 4.7 and Corollary 7.9]{Wang21} we have 
	\[ \MovE(X) = \bigcup_{(X',\alpha)} \alpha^*\Nef^e(X')
	\]
	where the union runs over all minimal models of $X$ and markings $\alpha\colon X \dashrightarrow X'$. Recall that $X$ has exactly $m+1$ minimal models $X_0 = X, X_1,
	\ldots, X_m$ up to isomorphism. Let $E \in \MovE(X)$. Then there exist a minimal model $X_j$, a birational map $ \alpha_j\colon X \dashrightarrow X_j$ which is a sequence of flops and a divisor $E_j \in \Nef^e(X_j)$ such that $E = \alpha_j^*(E_j)$.  By \cite[Corollary 7.7]{Wang21} any flop of a $X_k$ is among the $\varphi_{k,l}$ constructed in Subsection \ref{subsection:Construction}. Thus, up to isomorphism, we have 
	\[
	\alpha_j = \varphi_{i_s,j}\circ \varphi_{i_{s-1},i_{s}}\circ \cdots \circ \varphi_{i_0,i_1} \circ \varphi_{0,i_0}.
	\]
	Now, introducing elements of the form $\varphi_{0,i_k}\circ \varphi_{i_k,0}$ as in the proof of  Proposition \ref{prop_BirFree}, we have
 \[ \alpha_j = \varphi_{0,j}\circ \underbrace{\psi_{i_{s},j}\circ \ldots \circ \psi_{i_1,i_2}\circ \psi_{i_0,i_1}}_{ =: g \in \Bir(X)}. \] 
 Consequently, we have 
 \[E = g^*(\varphi_{0,j}^*E_j) \in g^*(\varphi_{0,j}^*\Nef^e(X_j)) \subset g^* \Pi. 
\]
Finally, let us assume that there is a $g \in \Bir(X)$ such that $\Int  \Pi   \cap  \Int g^*\Pi \neq \emptyset$. By the definition of $\Pi$ this implies that there exists a divisor $E_k \in \Amp(X_k)$ such that (after composing with some $\varphi_{0,k}$ if necessary) $g^*E_k$ is ample on $X_k$ or some other minimal model. This implies that $g$ (composed with some $\varphi_{0,k}$) is an automorphism, and hence,  $g^* = \id$ by Proposition \ref{prop_AutActTrivial}.
\end{proof}

To describe the movable cone we will use the connection between the generators of $\Bir(X)$ and the Coxeter system. To see this, let $D$ be, as in Section \ref{subsection:CG}, the fundamental domain of the action of the dual representation of $W$, and let $T$ the Tits cone of $W$. We can identify $V^*$ with $N^1(X)_\RR$ as follows: Let $\{\alpha_i\}_{i=1}^m$ be the basis of $V$, and consider $\{\mathbf{\beta}_i\}_{i=1}^m$ as the vectors in $V$ such that $\mathcal{Q}(\alpha_i,\beta_j) = \delta_{i,j}$, where $\delta_{i,j}$ is the Kronecker Delta. Then the fundamental domain $D$ is the cone $\mathrm{Cone}(\{\beta_i\}_{i=1}^m)$ generated by the $\beta_i$'s and the generators of the dual representation of $W$ correspond to the elements $t_i$. Finally, identify the fundamental domain $D$ of $W$ with $\Nef(X_0)$ via the map $\beta_i \mapsto H_i^0$. Note that as cones we have $\varphi_{0,i}^* \Nef(X_0) = t_i\Per_{(1,\ldots,i)^{-1}}. \Nef(X_0) = t_i. \Nef(X_0)$. 

\begin{proposition}
Identify $D$ with $\Nef(X)$ as above. Then the Tits cone $T$ is isomorphic to $\MovE(X)$.
\label{prop_titsiso}
\end{proposition}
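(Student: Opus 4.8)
The plan is to show that, under the given identification of $V^*$ with $N^1(X)_\RR$, both cones are unions of the very same chambers. First I would record that, as cones in $V^*$, one has $\Pi=\bigcup_{i=0}^m t_iD$ with $t_0=\mathrm{id}$: indeed $\varphi_{0,i}^*\Nef^e(X_i)=t_i\Per_{(1,\ldots,i)^{-1}}D=t_iD$ by Proposition \ref{propFlopMatrix}, since each $\Per_\sigma$ fixes the positive orthant $D=\Nef^e(X_0)$. Here I also use that $\Nef^e(X_i)=\Nef(X_i)$ is the full simplicial cone $\mathrm{Cone}(B_i)$, because its generators $B_i$ are pullbacks of hyperplane classes, hence effective, so that $\mathrm{Cone}(B_i)\subseteq\Eff(X_i)$. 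Let $\widehat G=\langle t_1,\ldots,t_m,\Per_\sigma:\sigma\in\mathcal{S}_m\rangle$. By Lemma \ref{lemmaSwapPerm} the $\Per_\sigma$ normalise $W$, so every element of $\widehat G$ can be written as $w\Per_\sigma$ with $w\in W$; since each $\Per_\sigma$ fixes $D$, this gives $\widehat G\cdot D=W\cdot D=T$, and $\widehat G$ stabilises $T$.

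For the inclusion $\MovE(X)\subseteq T$, I would use that $\MovE(X)=\bigcup_{g\in\Bir(X)}g^*\Pi$ by Proposition \ref{prop_FundamentalDom}. Each generator $\psi_{i,j}^*=t_it_jt_i\Per_{(i,j)}$ lies in $\widehat G$, so the group $\Bir(X)^*=\langle\psi_{i,j}^*\rangle$ is contained in $\widehat G$. Combining $\Pi\subseteq T$ with the fact that $\widehat G$ stabilises $T$ yields $\MovE(X)=\Bir(X)^*\cdot\Pi\subseteq\widehat G\cdot T=T$.

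For the reverse inclusion I would argue by connectivity of the chamber complex. Pushing all permutation factors of a composite $\alpha^*$ to the right via Lemma \ref{lemmaSwapPerm}, where they act trivially on $D$, every pulled-back nef cone $\alpha^*\Nef^e(X')$ of a marked minimal model equals $w_\alpha D$ for some $w_\alpha\in W$; thus every nef chamber of $\MovE(X)=\bigcup_{(X',\alpha)}\alpha^*\Nef^e(X')$ is a Coxeter chamber $wD$ of $T$. Let $C=\{w\in W\mid wD\text{ is a nef chamber}\}$, which is nonempty as $D=\Nef^e(X_0)$. I claim $C$ is closed under chamber adjacency: if $wD=\alpha^*\Nef^e(X')$ and $w'D$ shares a wall with $wD$, that wall is interior to $T$, hence a codimension one face of $\alpha^*\Nef(X')$ lying in $\Int\MovC(X)$; by Wang's description every such wall is a flopping wall and the flop $\varphi\colon X'\dashrightarrow X''$ is one of the maps of Subsection \ref{subsection:Construction}, so $\alpha^*\varphi^*\Nef^e(X'')$ is a nef chamber adjacent to $wD$ across this wall, and by uniqueness of the adjacent chamber it equals $w'D$. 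Hence $w'\in C$. Since the chamber complex of $T$ is gallery connected, $C=W$, giving $T=\bigcup_w wD\subseteq\MovE(X)$ and therefore $T=\MovE(X)$.

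The main obstacle will be this reverse inclusion, and within it the claim that every interior wall of a nef chamber is realised by a flop $\varphi_{i,j}$: this is exactly where I must invoke, via \cite[Corollary 7.6, Corollary 7.7]{Wang21}, that the flops of a minimal model $X_i$ are precisely the $m$ maps $\varphi_{i,j}$ and that these correspond to the $m$ walls of $\Nef(X_i)$, so that the flop graph coincides with the adjacency graph of the Coxeter chambers. A more computational alternative would be to verify the set identity $\{w_g\,t_k:g\in\Bir(X)^*,\ 0\le k\le m\}=W$ directly from $\psi_{i,j}^*=t_it_jt_i\Per_{(i,j)}$ and $(\psi_{i,j}^*)^2=(t_it_j)^3$ of Proposition \ref{prop_diag}, explaining how the $m+1$ chambers of $\Pi$ fill out $W$; but the connectivity argument is cleaner and avoids the explicit case analysis.
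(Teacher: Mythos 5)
Your forward inclusion $\MovE(X)\subseteq T$ is essentially the paper's own argument: both of you absorb the permutation matrices via Lemma \ref{lemmaSwapPerm} and the fact that $\Per_\sigma$ fixes $D$, then quote Proposition \ref{prop_FundamentalDom}. The reverse inclusion is where you genuinely diverge. The paper stays purely algebraic: given a reduced word $\ww=t_{i_1}t_{i_2}\cdots t_{i_s}$ it rewrites $\ww=(t_{i_1}t_{i_2}t_{i_1})\,t_{i_1}t_{i_3}\cdots t_{i_s}$, recognizes the first factor as $\psi^*_{i_1,i_2}$ up to a permutation, pushes the permutation through with Lemma \ref{lemmaSwapPerm}, and inducts on $s$ to produce a word $\ww'$ in the $\psi^*_{i,j}$ with $\ww.D\subseteq \ww'.\Pi$; no geometric input beyond Proposition \ref{prop_FundamentalDom} is needed. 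You instead run a gallery-connectivity argument: every marked nef cone is a Coxeter chamber, the set of such chambers contains $D$ and is closed under crossing walls (each wall of a nef chamber carries one of the flops $\varphi_{i,j}$, and a wall of the Tits cone separates exactly two chambers), hence by gallery connectedness it exhausts $W$. This is a valid alternative: it makes the dictionary ``flop graph $=$ chamber-adjacency graph'' explicit and would transfer to situations where the matrix identities are less clean, at the price of re-invoking Wang's classification of flops (\cite{Wang21}, Cor.~7.6/7.7) and standard Tits-cone combinatorics (simple transitivity on chambers, uniqueness of the chamber across a wall), where the paper only needs formal word manipulation.

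One step you must repair, though. In the adjacency claim you argue that a shared wall ``is interior to $T$, hence \ldots lying in $\Int\MovC(X)$.'' That inference runs in the wrong direction: at this stage you only know $\MovE(X)\subseteq T$, so interiority in $T$ tells you nothing about membership in $\MovC(X)$; using it would be circular, since $T\subseteq\MovE(X)$ is exactly what you are proving. Fortunately the step is unnecessary, and your own closing paragraph contains the correct substitute: for general $X$ all the contractions $\pi^i_{i,j}$ are small (Wang's Assumption 7.5/Lemma 7.6), so \emph{all} $m$ walls of every $\Nef(X_i)$ --- hence of every chamber $\alpha^*\Nef^e(X')$ --- are flopping walls, with the flops among the $\varphi_{i,j}$ of Subsection \ref{subsection:Construction}. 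You also use implicitly that $\varphi_{i,j}$ is not an isomorphism (again Assumption 7.5), so that $\alpha^*\varphi^*\Nef^e(X'')$ has interior disjoint from $wD$ and therefore really sits on the opposite side of the wall; with these two points made explicit, your induction along galleries closes correctly.
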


\begin{proof}
From Proposition \ref{prop_FundamentalDom} we have that $\Pi = \Nef^e(X_0) \cup \bigcup_{i=1,\ldots,m} \varphi_{0,i}^*\Nef^e(X_i)$ is a fundamental domain for the action of $\Bir(X)$ on $\MovE(X)$. Also, notice that if $\ww = t_{i_1}t_{i_2}\cdots t_{i_s}$, then $\ww.D = \ww\cdot \Per_\sigma.D$. In particular, $\Pi = \NefE(X_0) \cup \bigcup_{i=1}^m t_i.\NefE(X_0)$, so $\MovE(X) \subseteq T$.

To prove the opposite inclusion, we need to show that given a freely reduced word $\ww = t_{i_1}t_{i_2}\cdots t_{i_s}$, there exists a word $\ww' = \psi_{i'_1,j'_1}^*\psi_{i'_2,j'_2}^*\cdots \psi_{i'_r,j'_r}^*$ such that the chamber $\ww.D$ of the Tits cone is included in the chamber $\ww'.\Pi$ of the movable cone. We construct the word $\ww'$ by induction on the length of $\ww$. If $\ww = t_i$, then $\ww.D \subseteq \Pi$, so $\ww' = \mathrm{id}$. If $\ww = t_{i_1}t_{i_2}\cdots t_{i_s}$, with $s \geq 2$, we can write \[ 
\ww =  t_{i_1}t_{i_2}t_{i_3}\cdots t_{i_s} = (t_{i_1}t_{i_2}t_{i_1})t_{i_1}t_{i_3}\cdots t_{i_s}.
\]

Then we set $\ww' = \psi_{i_1,i_2}^*\ww_1'$, where $\ww_1'$ is the word associated to 
\[
\ww_1 =
t_{\sigma(i_1)}t_{\sigma(i_3)}\cdots t_{\sigma(i_s)} = t_{i_2}t_{\sigma(i_3)}\cdots t_{\sigma(i_s)} 
\] with $\sigma = (i_1,i_2)$.  The word $\ww'_1$ exists because the length of $\ww_1$ is less than $s$. Therefore $T \subseteq \MovE(X)$.
\end{proof}

To describe the boundary of $\MovC(X)$ we can use the fact that the underlying Coxeter system $(W,S)_Q$ is Lorentzian.

\begin{proposition}
The Coxeter system $(W,S)_Q$ is Lorentzian.
\label{prop_lorentzian}
\end{proposition}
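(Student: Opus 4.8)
The plan is to compute the signature of the matrix $Q$ defined in \eqref{eq_matrixQ} and verify that it equals $(m-1,1)$, which is exactly the condition for $(W,S)_Q$ to be Lorentzian. Since the Coxeter system is only considered in the case $n \geq 2$ for the Lorentzian property to be relevant (recall that $m_{ij} = \infty$ requires $n \geq 2$), I will work with the symmetric $m \times m$ matrix $Q$ having $1$ on the diagonal and $-n/2$ in every off-diagonal entry.

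First I would exploit the highly symmetric structure of $Q$. The matrix can be written as $Q = (1 + n/2)\,\mathrm{Id} - (n/2)\,J$, where $J$ is the all-ones $m \times m$ matrix. The eigenvalues of $J$ are $m$ (with multiplicity one, eigenvector the all-ones vector) and $0$ (with multiplicity $m-1$, eigenspace the hyperplane of vectors summing to zero). Consequently the eigenvalues of $Q$ are immediate: the all-ones eigenvector gives the eigenvalue
\[
\left(1 + \tfrac{n}{2}\right) - \tfrac{n}{2}\,m = 1 - \tfrac{n(m-1)}{2},
\]
while the $(m-1)$-dimensional complementary eigenspace gives the eigenvalue $1 + \tfrac{n}{2}$, with multiplicity $m-1$.

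Next I would read off the signs of these eigenvalues under the standing hypotheses $n \geq 2$ and $m \geq 2$ (indeed $nm - (n+1) \geq 3$ forces $m \geq 2$). The repeated eigenvalue $1 + n/2$ is strictly positive, contributing $m-1$ positive directions. For the simple eigenvalue, since $n \geq 2$ and $m - 1 \geq 2$ we have $n(m-1)/2 \geq 2 > 1$, so $1 - n(m-1)/2 < 0$, contributing exactly one negative direction and no zero eigenvalues. Hence the signature of $Q$ is $(m-1,1)$, and with $n = |S| = m$ this is precisely the signature $(n-1,1)$ required in the definition of a Lorentzian Coxeter system. I should double-check the strictness of the inequality at the boundary of the allowed range to be sure the simple eigenvalue never vanishes; the main (and only) subtlety is confirming $n(m-1)/2 > 1$ across all admissible $(n,m)$, but this is guaranteed by $n \geq 2$ together with $m \geq 2$, and the constraint $nm-(n+1)\geq 3$ only reinforces it. This completes the verification with no genuine obstacle beyond the clean eigenvalue bookkeeping.
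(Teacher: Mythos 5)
Your proof is correct and follows essentially the same route as the paper: the paper's proof simply exhibits the eigenvalues $1+\tfrac{n}{2}$ (eigenvectors $\mathbf{e}_1-\mathbf{e}_i$, multiplicity $m-1$) and $1-\tfrac{n(m-1)}{2}$ (eigenvector $(1,\ldots,1)$), which is exactly the spectrum you obtain from the decomposition $Q=(1+\tfrac{n}{2})\,\mathrm{Id}-\tfrac{n}{2}J$.

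Two small points of bookkeeping. First, the paper asserts the Lorentzian property for all $n\geq 1$ (and uses this in Remark \ref{remark_boundaryN1}), so your restriction to $n\geq 2$ is unnecessary; for $n=1$ the constraint $nm-(n+1)\geq 3$ forces $m\geq 5$, and the same computation gives $1-\tfrac{m-1}{2}<0$. Second, your justification of the negativity of the simple eigenvalue invokes ``$n\geq 2$ and $m-1\geq 2$,'' but $m\geq 2$ does not imply $m-1\geq 2$; in the case $m=2$ one needs the constraint $nm-(n+1)\geq 3$, which forces $n\geq 4$ and hence $\tfrac{n(m-1)}{2}=\tfrac{n}{2}\geq 2>1$. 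You gesture at this with ``the constraint only reinforces it,'' but the explicit case distinction is what actually closes the argument, so it should be spelled out rather than left implicit.
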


\begin{proof}
The statement follows from the fact that for $n \geq 1$, the matrix $Q$ from Equation \eqref{eq_matrixQ} has eigenvalues $\lambda_1 = 1+\frac{n}{2} > 0$ with eigenvectors $\mathbf{e}_1 - \mathbf{e}_i$, for $2 \leq i \leq m$; and $\lambda_2 = 1 - \frac{n(m-1)}{2} < 0$, with eigenvector $(1,1,\ldots,1)$.
\end{proof}

Given that $\MovE(X)$ is equal to the Tits cone, and that the Coxeter system is Lorentzian, we can replicate the proof of \cite[Theorem 4.10]{Y21} to compute the boundary of $\MovC(X)$ (see Remark \ref{remark_boundaryN1} for the case of $n = 1$.).

\begin{proposition}[cf. {\cite[Theorem 4.10]{Y21}}]
The boundary of $\MovC(X)$ is the closure of the union of the $W$-orbits of the cones  $\{ a_\lambda v_\lambda + \sum_{i,j\neq k} a_k H^0_k \mid a_\lambda \geq 0, a_k\geq 0\}$, where $v_\lambda$ is:
\begin{itemize}
\item If $n \geq 3$, an eigenvector associated to the unique eigenvalue $\lambda >1$ of $t_it_j$; or
\item If $n = 2$, $v_\lambda = 0$.
\end{itemize}
\label{prop_movBoundary}
\end{proposition}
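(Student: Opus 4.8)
The plan is to transport the whole computation to the Tits cone and exploit the Lorentzian structure. By Proposition \ref{prop_titsiso} the identification $\beta_i\mapsto H^0_i$ turns $\MovE(X)$ into the Tits cone $T$ of $(W,S)_Q$, so that $\MovC(X)=\overline T$ and it suffices to describe $\partial\overline T$. Recall that a point of $\partial\overline T$ either lies on a facet of some chamber $w.D$ that is not shared with a neighbouring chamber, or is an accumulation point of the chamber decomposition. Since for $n\geq 2$ the group $W$ is the universal Coxeter group $\ZZ/2\ZZ\ast\cdots\ast\ZZ/2\ZZ$, every wall of a chamber is the reflecting wall of a conjugate $w t_k w^{-1}$ and is therefore shared with the adjacent chamber $wt_kw^{-1}.(w.D)$; hence no chamber facet survives and $\partial\overline T$ is the closure of the set of accumulation directions. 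By Proposition \ref{prop_lorentzian} and Theorem \ref{thmInfiniteWord} these directions are exactly the limits $\widehat\gamma(\ww)$ of infinite reduced words, all lying on the boundary quadric of Remark \ref{rem_quadric}.

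Next I would isolate the contribution of a single pair $i<j$. Both $t_i$ and $t_j$ fix $H^0_k$ for every $k\neq i,j$ (this is immediate from their matrix form in Section \ref{section_computing}), so the dihedral subgroup $\langle t_i,t_j\rangle$ acts trivially on the $(m-2)$-plane $U_{ij}=\langle H^0_k \mid k\neq i,j\rangle$ and dihedrally on a complementary plane. For $n\geq 3$, Proposition \ref{prop_diag} identifies in that plane the dominant eigenvector $v_\lambda$ of $t_it_j$ with eigenvalue $\lambda>1$, and by Theorem \ref{thmInfiniteWord} the infinite reduced word $t_it_jt_i\cdots$ converges to $\widehat{v_\lambda}$, a point on the boundary quadric. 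One then checks that the supporting hyperplane of $\overline T$ at $\widehat{v_\lambda}$ is stable under $\langle t_i,t_j\rangle$ and is $Q^{-1}$-orthogonal to $U_{ij}$, so the exposed face of $\partial\overline T$ at $\widehat{v_\lambda}$ is exactly
\[
\{\, a_\lambda v_\lambda + \textstyle\sum_{k\neq i,j} a_k H^0_k \mid a_\lambda\geq 0,\ a_k\geq 0 \,\}.
\]

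To assemble the whole boundary I would argue that every accumulation direction arises this way up to the action of $W$. Any infinite reduced word is, up to a finite prefix and up to applying some $\Per_\sigma$, governed by a rank-two alternating subword $t_it_j$, so its limit lies in a $W$-translate of the cone above or is approximated by such limits; taking the $W$-orbit over all $w\in W$ and all pairs $i<j$ and then closing up therefore yields precisely $\partial\MovC(X)$. For $n=2$, Proposition \ref{prop_diag} shows that $t_it_j$ is parabolic, with $1$ as its only eigenvalue and not diagonalizable, so there is no dominant eigenvector; the single fixed null direction is recovered only as a limit of the $W$-orbit of the degenerate faces $\mathrm{Cone}(H^0_k\mid k\neq i,j)$, which is the content of setting $v_\lambda=0$ in the statement.

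The main obstacle is the verification of the two inclusions defining $\partial\overline T$. Showing that each cone genuinely lies on the boundary rather than in the interior relies on the Lorentzian signature of $Q$ (Proposition \ref{prop_lorentzian}): it places $\widehat{v_\lambda}$ on the boundary quadric and, through Theorem \ref{thmInfiniteWord}, guarantees that the approaching sequence of chambers does not overshoot it, so that the exposed face is honestly a supporting face. Conversely, proving that no accumulation direction escapes the closure of this $W$-orbit is the delicate step, and it is precisely here that one replicates the argument of \cite[Theorem 4.10]{Y21}. The parabolic case $n=2$ is the subtlest point, since the limiting direction sits in a non-split eigenspace and must be produced by a convergence argument rather than read off as an eigenvector.
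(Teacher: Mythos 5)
Your overall skeleton --- identify $\MovE(X)$ with the Tits cone via Proposition \ref{prop_titsiso}, use the Lorentzian property (Proposition \ref{prop_lorentzian}) and Theorem \ref{thmInfiniteWord} to control limit directions, analyze the rank-two subgroups $\langle t_i,t_j\rangle$, and take $W$-orbits --- is the same one the paper intends, since the paper's entire proof is the sentence that one replicates \cite[Theorem 4.10]{Y21} with $J=\{1,\ldots,m\}$. But your first reduction contains a genuine error. You claim that, because every chamber facet is shared with an adjacent chamber, $\partial\MovC(X)$ is the closure of the set of accumulation directions, hence (by Remark \ref{rem_quadric}) contained in the quadric. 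This is false for $n\geq 3$: the boundary contains the flat cones $\{a_\lambda v_\lambda+\sum_{k\neq i,j}a_kH^0_k\}$ of the statement, and their points other than the ray $\RR_{\geq 0}v_\lambda$ do not lie on the quadric. Concretely, for $n=m=3$ the class $H^0_3=(0,0,1)$ gives the value $1\neq 0$ in $t_0^2-6t_0t_1+t_1^2-6t_0t_2-6t_1t_2+t_2^2$, yet $H^0_3$ is a boundary point (see Example \ref{examp_3foldn3m3} and Figure \ref{figureMovableN3}). Your dichotomy "unshared facet or accumulation direction" omits the third type of boundary point: a point of the Tits cone itself lying on a face of codimension $\geq 2$ of a chamber. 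Such a point has infinite stabilizer --- here $\langle t_i,t_j\rangle$ fixes $\mathrm{Cone}(H^0_k\mid k\neq i,j)$ pointwise and is infinite for $n\geq 2$ --- so it can never be interior to the Tits cone, but it is not a Chen--Labb\'e limit direction either. Your own second paragraph, which correctly exhibits the flat exposed face at $\widehat{v_\lambda}$, contradicts the reduction you set up in the first.

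The assembly step is also incorrect as stated: an infinite reduced word in the universal Coxeter group need not be, up to a finite prefix and a permutation, alternating in two letters (consider $t_1t_2t_3t_1t_2t_3\cdots$), so its limit direction lies in no $W$-translate of the cones of the statement, only in the closure of their union. Proving that every such limit is indeed approximated by translates of these cones --- i.e.\ that nothing on the boundary escapes the closure of the orbit --- is precisely the content of \cite[Theorem 4.10]{Y21}, which you finally invoke for "the delicate step." Deferring that step is defensible, since the paper does exactly this; but the repair needed upstream is to replace your dichotomy by the correct three-part description: $\partial\MovC(X)$ consists of the $W$-orbits of the faces of $\Nef(X)$ with infinite stabilizer, of the flat cones joining those faces to the attracting eigendirections $v_\lambda$ (empty exactly when $n=2$, where $t_it_j$ is unipotent by Proposition \ref{prop_diag}), and of the limit set on the quadric arising in the closure.
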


\begin{proof}
The proof follows exactly as in \cite[Theorem 4.10]{Y21}, with $J = \{1,2,\ldots , m\}$.
\end{proof}

\begin{remark}
Propositions \ref{prop_titsiso} and \ref{prop_lorentzian} still hold when $n = 1$, but in this case the boundary of the Tits cone associated to the Coxeter system $(W,S)_Q$ cannot be described in the same way as for $n \geq 2$ shown in Proposition \ref{prop_movBoundary}.
\label{remark_boundaryN1}
\end{remark}

\begin{remark}
The boundary of the movable cone $\MovC(X)$ can also be described in terms of $\Bir(X)$-orbits. This will be done in the examples of Section \ref{section_ex}.
\end{remark}

\section{Examples}\label{section_ex}

\subsection{General case}\label{subsection_general}
In this subsection, we study the movable cone in concrete examples. We consider general cases with varying $n$ and fixed Picard number $m=3$. 
\begin{remark}
	In this section, we provide pictures of the movable cone and the nef cone  for $m=3$ created with the software Mathematica \cite{Mathematica}. To do so, we project from $\PP N^1(X)_\RR \cong \PP^2_\RR$ to the affine space $V_1 = \{ v \in \RR^3 \mid \varphi(v)= v_0+v_1+v_2 =1\}$ as described in Remark \ref{rem_AffSpace}. 
\end{remark}
\begin{example}[$n=2$ and $m=3$]\label{examp_3foldn2m3}
	Let $X := X_0 \subset \widehat{\PP}_0 = \PP^2_1 \times \PP^2_2 \times \PP^2_3$ be a general complete intersection of 3 forms of multidegree $(1,1,1)$ and let $F$ be the $(1,1,1,1)$-form in $\PP =\PP^2_0 \times \PP^2_1 \times \PP^2_2 \times \PP^2_3$ as defined in Construction \ref{cons_Xi}. Then $X$ is Calabi-Yau variety with 
	\[ \dim X = n(m-1)-1 = 3 \text{ and } \rho(X) = m = 3.\] The Calabi-Yau varieties $X_i \subset \widehat{\PP}_i$ for $i=1,2, 3$ from Construction \ref{cons_Xi} are the only minimal models of $X$ up to isomorphism. By Proposition \ref{prop_BirFree} we have 
	\[
	\Bir(X)  = \Aut(X) \cdot \langle \psi_{i,j} \mid 1 \leq i < j \leq 3 \rangle
	\]
	and
	\[
	\langle \psi_{i,j}^* \mid 1 \leq i < j \leq 3 \rangle \cong \ZZ \ast \ZZ \ast \ZZ
	\]
	 by Theorem \ref{thm_BirFree}.
	Using Proposition \ref{propBirAutMatrix} we compute that
	\[ \psi_{1,2}^* = 
	\begin{pmatrix}
	-{2}&-{3}&{0}\\
	{3}&{4}&{0}\\
	{6}&{12}&1\\
	\end{pmatrix}, 
	\psi_{1,3}^* = \begin{pmatrix}
-2&0&-3\\
6&1&12\\
3&0&4\\
	\end{pmatrix} \text{ and } 
	\psi_{2,3}^* = \begin{pmatrix}
	 1&{6}&{12}\\
	{0}&-{2}&-{3}\\
	{0}&{3}&{4}\\
	\end{pmatrix}
	\]
	with respect to the bases $B_0$ given in \eqref{eq_basisBi}.
	
We have $\Nef(X) = \text{Cone}(H^0_1,H^0_2,H^0_3)$ and the fundamental domain $\Pi$ from Proposition \ref{prop_FundamentalDom}  in $\PP N^1(X)_\RR$ is hexagonal with vertices
\[ \{ H^0_3, \varphi_{0,1}^*H^1_0 , H^0_2, \varphi_{0,3}^*H^3_0, H^0_1,\varphi_{0,2}^*H^2_0 \}. \]

\begin{figure}[hbt]
  \begin{center}
  \hfill
  \begin{minipage}[l]{.45\textwidth}
 \includegraphics[scale=.36]{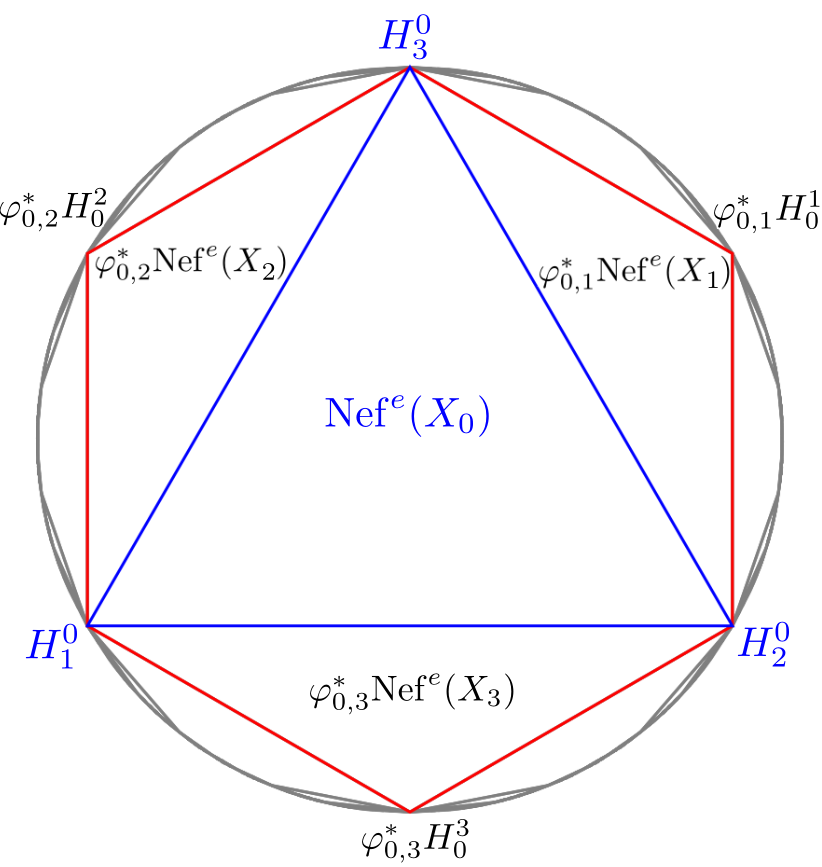}
  \end{minipage} \hfill \hfill \begin{minipage}[r]{.45\textwidth}
  \includegraphics[scale=.34]{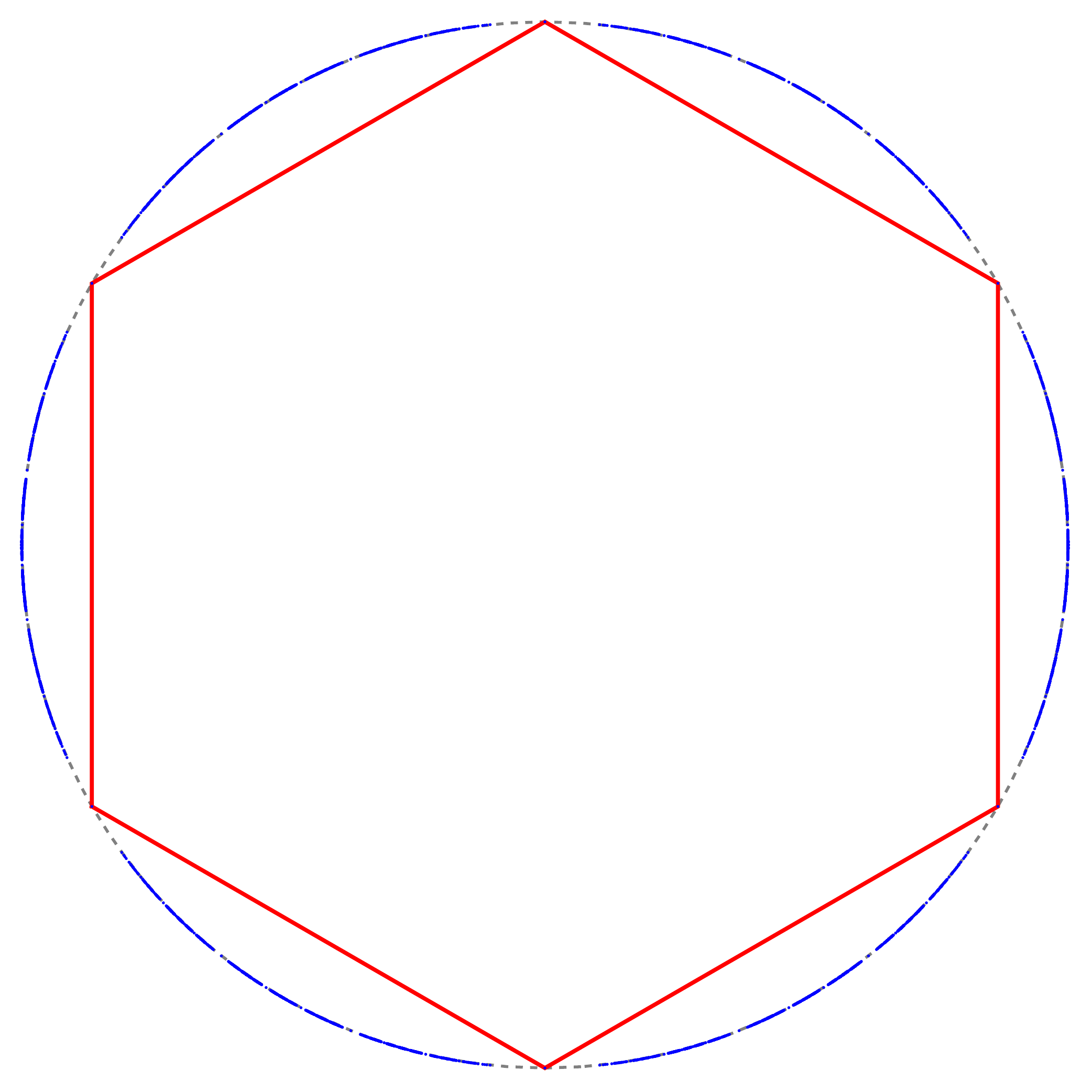}
  \end{minipage}
  \hfill
  \end{center}
  \caption{
  Projective view of the movable cone of $X$ in the case of $m = 3$ and $n = 2$. 
  On the left, the cone obtained as the union of the images of the fundamental domain, in red, under the action of $\Bir(X)$. The nef cone is drawn in blue. On the right, the boundary of the $\MovC(X)$ obtained using Proposition \ref{prop_movBoundary}. In red, the fundamental domain, and in blue the $\Bir(X)$-orbits of the vertices of the fundamental domain.
  }
  \label{figureMovableN2}
\end{figure}
As mentioned in Remark \ref{rem_quadric},  all accumulation points of $\MovC(X)$ lie on the quadratic cone 
\[ \{t_0t_1 + t_0t_2 + t_1t_2=0\}
\] 
in $\PP N^1(X)_\RR \cong \PP^2_\RR$ given by the inverse of the symmetric matrix
\[Q = \begin{pmatrix}
1 & -1 & -1 \\
-1 & 1 & -1 \\
-1 & -1 & 1 \\
\end{pmatrix}\]
 from Equation \eqref{eq_matrixQ} with $n=2$. 

Recall that any codimension one face of $\Nef(X_i)$ corresponds to a contraction from $X_i$ to some $W_{i,j}$ and induces the flop $\varphi_{i,j} \colon X_i \dashrightarrow X_j$. By pulling back to $X$ we can identify each codimension one face of the red hexagon $\Pi$ in Figure \ref{figureMovableN2} with exactly one element in 
\begin{equation}
\label{eq_CodimOne}
\Psi = \{
\psi_{1,2}^*,  \psi_{2,1}^* =(\psi_{1,2}^*)^{-1} , \psi_{1,3}^*, \psi_{3,1}^*, \psi_{2,3}^*, \psi_{3,2}^*
\}.
\end{equation}
Indeed, for each $\psi \in \Psi$, the cone $\psi \Pi$ is again hexagonal and from the definition of the $\psi_{i,j}^*$ we deduce that $\psi \Pi$  has exactly one codimension one face in common with $\Pi$. More, generally, for any word $
\ww = \psi_{i_1,j_1}^*\psi_{i_2,j_2}^*\cdots \psi_{i_r,j_r}^* $, the cones $
\ww \Pi \text { and } \ww' \Pi,
$
for $\ww' = \psi_{i_1,j_1}^*\psi_{i_2,j_2}^*\cdots \psi_{i_{r-1},j_{r-1}}^*$, 
have one common codimension one face  and lie in the segment correponding to $ \psi_{i_1,j_1}^*$.

\end{example}

\begin{example}[$n=3$ and $m=3$ ]\label{examp_3foldn3m3}
	We consider a general $X := X_0 \subset \PP^3_1 \times \PP^3_2 \times \PP^3_3$  which is a smooth complete intersection Calabi-Yau fivefold of Picard number $m=3$. 
	As before, $X$ has 4 minimal models $X,X_1,X_2,X_3$ up to isomorphism
	and 
		\[
	\Bir(X)  = \langle \psi_{i,j} \mid 1 \leq i < j \leq 3 \rangle \cong \ZZ \ast \ZZ \ast \ZZ
	\]
	by Proposition \ref{prop_AutTrivial} and Theorem \ref{thm_BirFree}. We compute that
		\[ \psi_{1,2}^* = 
	\begin{pmatrix}
	 -{3}&-{8}&{0}\\
	{8}&{21}&{0}\\
	{12}&{36}&1\\
	\end{pmatrix}, 
	\psi_{1,3}^* = \begin{pmatrix}
-{3}&{0}&-{8}\\
{12}&1&{36}\\
{8}&{0}&{21}\\
	\end{pmatrix} \text{ and } 
	\psi_{2,3}^* = \begin{pmatrix}
 1&{12}&{36}\\
{0}&-{3}&-{8}\\
{0}&{8}&{21}\\
	\end{pmatrix}
	\]
	with respect to the basis $B_0$ from Equation \eqref{eq_basisBi}.
	
As before we have $\Nef(X) = \text{Cone}(H^0_1,H^0_2,H^0_3)$ and the fundamental domain $\Pi$ from Proposition \ref{prop_FundamentalDom} is hexagonal in $\PP N^1(X)_\RR$  with vertices
\[ \{ H^0_3, \varphi_{0,1}^*H^1_0 , H^0_2, \varphi_{0,3}^*H^3_0, H^0_1,\varphi_{0,2}^*H^2_0 \}. \]
Now, in contrast to the previous example  $\psi_{i,j}^*$ is diagonalizable for any $ i\neq j$ and the boundary of $\MovC(X)$ contains now line segments between the eigenvector of $\psi^*_{i,j}$ associated to the unique eigenvalue $\lambda > 1$ and the vertex corresponding to $H^0_k$ where $1 \leq k \leq 3$ and $k \neq i,j$ as pictured in Figure \ref{figureMovableN3}. 
The inverse of the matrix 
\[Q = \begin{pmatrix}
1 & -\frac{3}{2} & -\frac{3}{2} \\[0.5em]
-\frac{3}{2} & 1 & -\frac{3}{2}\\[0.5em]
-\frac{3}{2} & -\frac{3}{2} & 1 \\
\end{pmatrix}\] from Equation \eqref{eq_matrixQ} with $n=3$  defines a quadratic cone 
\[
\{t_0^2-6t_0t_1+t_1^2-6t_0t_2-6t_1t_2+t_2^2=0\}
\]
in $\PP N^1(X)_\RR \cong \PP^2_\RR$ which contains all these eigenvectors. 
As in the previous example, every edge of the red hexagon in Figure \ref{figureMovableN3} corresponds to one of the elements in \[
\Psi = \{
\psi_{1,2}^*,  \psi_{2,1}^*, \psi_{1,3}^*, \psi_{3,1}^*, \psi_{2,3}^*, \psi_{3,2}^*
\}.
\]
and  $\{(\psi^*_{i_1,j_1})^k \cdot \Pi \}$ gives a sequence of adjacent shrinking cone approaching the eigenvector of $\psi^*_{i_1,j_1}$ associated to the unique eigenvalue $\lambda > 1$. 

\begin{figure}[hbt]
  \begin{center}
  \hfill
  \begin{minipage}[c]{.45\textwidth}
  \includegraphics[scale=.33]{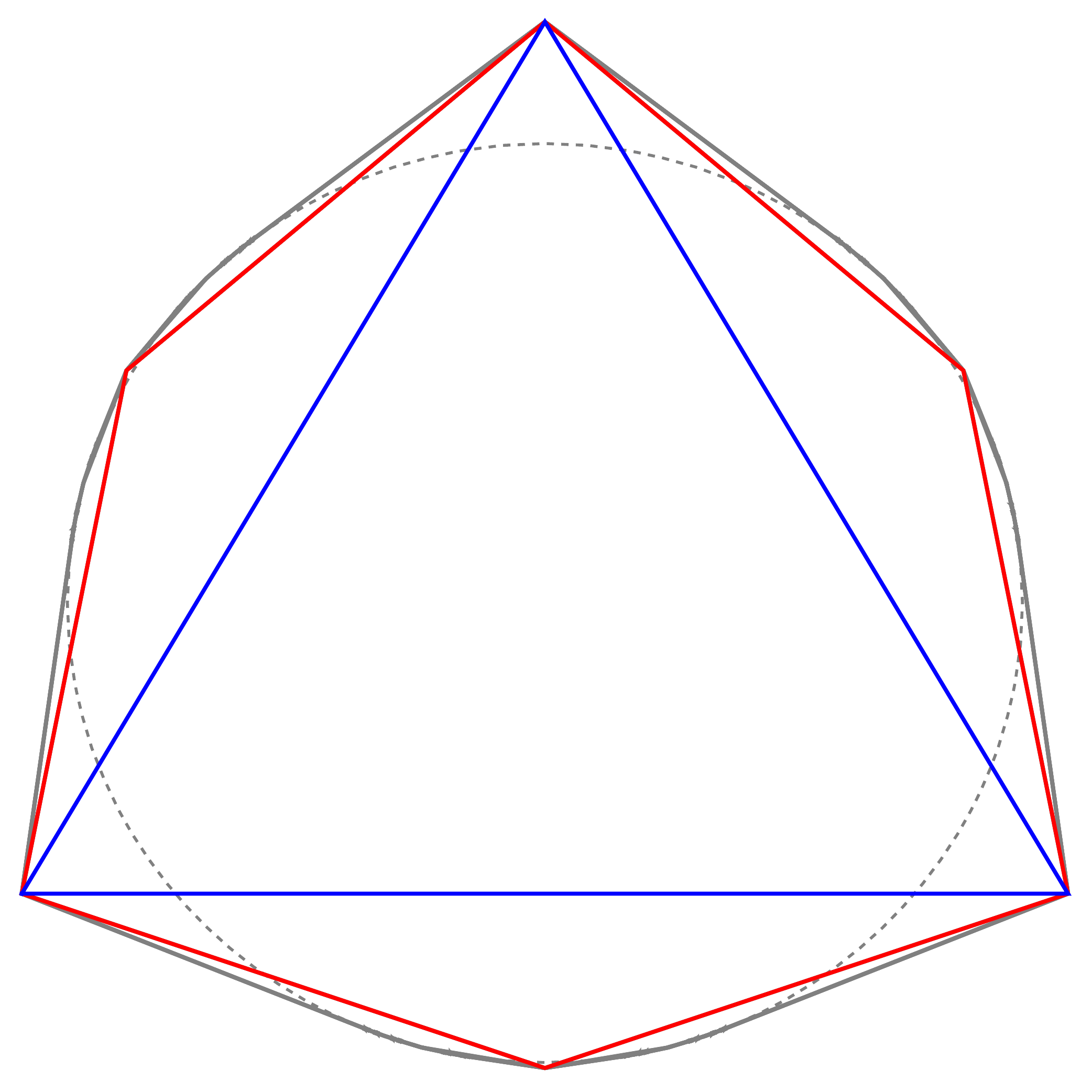}
  \end{minipage} \hfill \hfill \begin{minipage}[c]{.45\textwidth}
  \includegraphics[scale=.33]{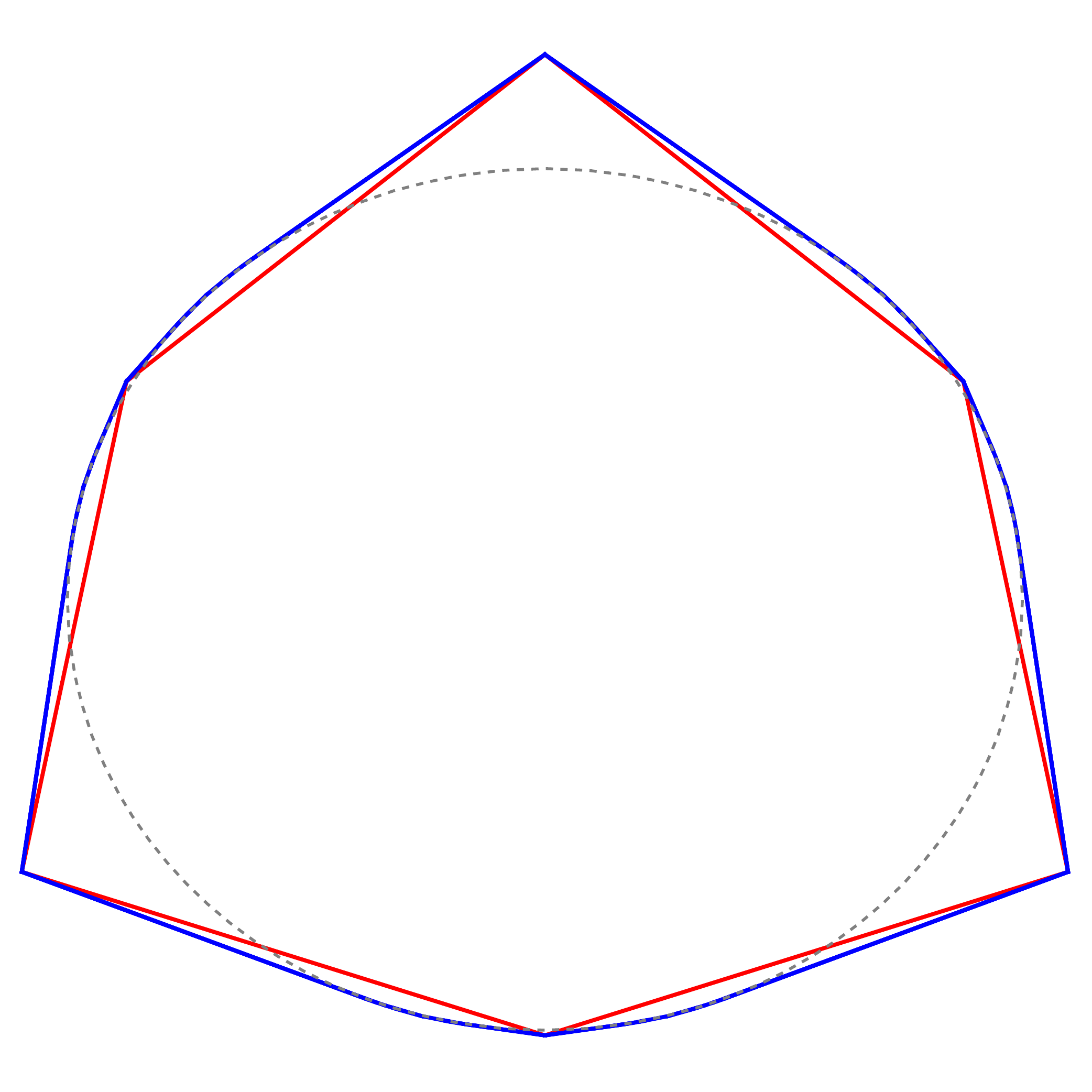}
  \end{minipage}
  \hfill
  \end{center}
  \caption{
  Projective view of the movable cone of $X$ in the case of $m = 3$ and $n = 3$. 
  On the left, the cone obtained as the union of the images of the fundamental domain, in red, under the action of $\Bir(X)$. The nef cone is drawn in blue. On the right, the boundary of the $\MovC(X)$ obtained using Proposition \ref{prop_movBoundary}. In red, the fundamental domain, and in blue the $\Bir(X)$-orbits of the cones described in Proposition \ref{prop_movBoundary}.
  }
  \label{figureMovableN3}
\end{figure}

\end{example}
\subsection{Non-general case with non-trivial automorphism group}\label{subsection_nongeneral}
In this subsection we present some special cases which have a non-trivial automorphism group. 
\begin{example}[$n=2$ and $m=3$]\label{example_nontrivialaut}
	As before, we consider first the complete intersection of 3 forms of multidegree $(1,1,1)$ in $\widehat{\PP}_0 = \PP^2_1 \times \PP^2_2 \times \PP^2_3$.  Let $R_0$ be the the Cox ring of  $\widehat{\PP}_0 $, and let $\textbf{x}^i = [x_0^i,x_1^i,x_2^i]$ be the variables of the individual $\PP^2_i$. Then we define a homomorphism $g$ by
	\begin{align*}
  x_j^1 & \mapsto x_j^2, \\
 x_j^2 & \mapsto x_j^1, \\ 
	x_j^3 & \mapsto x_j^3, 
	\end{align*}
	for $j = 0,1,2$. 
\end{example}
Let $f_0',f_1',f_2'$ be three  $(1,1,1)$-forms in $\widehat{\PP}_0$ and set
 \[ f_i := f_i' + g(f_i').\]
If the $f_i'$ are chosen generically, the variety  $X_0$ defined by $f_0,f_1,f_2$ is a smooth Calabi-Yau threefold in $\widehat{\PP}_0$. Moreover, $X_0$ is invariant under the automorphism of $\widehat{\PP}_0$ induced by $g$. 
Let 
\[ 
F = \sum_{i=0}^{2} x^0_i f_i
\]
be the $(1,1,1,1)$-form in $\PP$. Then we can extend $g$ to the Cox ring of $\PP$ by letting the variables $\textbf{x}^0$ be mapped identically. Thus, $F$ is a homogeneous polynomial which is invariant under interchanging the $\textbf{x}^1$- and $\textbf{x}^2$-variables.  Recall that the $3 \times 3$-matrix $A_{i,j}$ is the matrix of second derivatives of the polynomial $F$ with respect to the homogeneous coordinates $\textbf{x}^i$ and $\textbf{x}^j$. Now, if we proceed with Wang's construction we see that the matrix $A_{1,2}$ defining the variety $W_{1,2}$ is symmetric and that the two projections $\pi^{1}_{1,2}$ and $\pi^{2}_{1,2}$  are no longer small contractions but divisorial. 

Furthermore, the matrix $A_{0,1}$ is the transpose of the matrix $A_{2,0}$ after applying the homomorphism $g$. Thus $W_{0,1}$ and $W_{2,0}$ are isomorphic and from the diagram in Proposition \ref{propBirAutMatrix} we  conclude
that the  two minimal models $X_1$ and $X_2$ are isomorphic with the isomorphism given by
\begin{align*}
\varphi_{1,2}\colon X_1  &\rightarrow X_2, \\
[\textbf{p}^0,\textbf{p}^2,\textbf{p}^3] & \mapsto [\textbf{q}^0,\textbf{q}^1,\textbf{q}^3]:= [\textbf{p}^0,\textbf{p}^2,\textbf{p}^3].
\end{align*}

The birational automorphism $\psi_{1,2}$ from Proposition \ref{propBirAutMatrix} is now an automorphism given by 
\begin{align*}
\psi_{1,2}\colon X_0  &\rightarrow X_0, \\
[\textbf{p}^1,\textbf{p}^2,\textbf{p}^3] & \mapsto [\textbf{p}^2,\textbf{p}^1,\textbf{p}^3];
\end{align*}
and 
	\[
	\psi_{1,2}^* = \begin{pmatrix}
	0 & 1 & 0 \\
	1 & 0 & 0 \\
	0 & 0 & 1
	\end{pmatrix}
	\]
	with respect to the basis $B_0$. The presentation matrices of $\psi_{2,3}^*$ and $\psi_{1,3}^*$ are the same as in Example \ref{examp_3foldn2m3} but due to the isomorphism between $X_1$ and $X_2$ we have
\[
\psi_{2,3} = 	\psi_{1,2} \circ 	\psi_{1,3}\circ 	\psi_{1,2}.
\]
Indeed, by the symmetry of $F$ with respect to $\textbf{x}^1$ and $\textbf{x}^2$ we have an automorphism
\begin{align*}
\alpha \colon X_3  &\rightarrow X_3, \\
[\textbf{p}^0,\textbf{p}^1,\textbf{p}^2] & \mapsto [\textbf{p}^0,\textbf{p}^2,\textbf{p}^1]
\end{align*}
which coincides with $\varphi_{2,3} \circ \varphi_{1,2} \circ \varphi_{3,1}$. 
Then, from the definition of the maps $\psi_{i,j}$ and $\alpha$ we see that
\begin{align*}
\psi_{2,3}   & = \varphi_{3,0} \circ \varphi_{2,3} \circ \varphi_{0,2} \\
& = \varphi_{3,0} \circ \varphi_{2,3} \circ (\varphi_{1,2} \circ \varphi_{0,1} \circ \psi_{1,2} ) \\ 
& = \varphi_{3,0} \circ (\varphi_{2,3} \circ \varphi_{1,2} \circ \varphi_{3,1}) \circ \varphi_{1,3}\circ \varphi_{0,1} \circ \psi_{1,2}  \\ 
& = \varphi_{3,0} \circ \alpha \circ  \varphi_{1,3} \circ \varphi_{0,1} \circ \psi_{1,2} \\ 
& = (\varphi_{3,0} \circ \alpha \circ \varphi_{0,3} )\circ   \psi_{1,3}  \circ \psi_{1,2}  = 
\psi_{1,2} \circ \psi_{1,3}  \circ \psi_{1,2},
\end{align*}
where the last equality follows from the fact that the matrix $A_{0,3} = A_{3,0}^T$ is invariant under interchanging $\textbf{x}^1$ and $\textbf{x}^2$.
Assuming that there are no further automorphisms we have
\[
\Bir(X_0) = \langle \psi_{1,2}, \psi_{1,3} \rangle \cong \ZZ/2\ZZ \ast \ZZ.
\]
The cones $\Nef(X_0)$ and $\varphi_{0,3}^*(\Nef(X_3))$  are preserved under $\psi_{1,2}^*$, whereas $\varphi_{0,2}^*(\Nef(X_2))$ is mapped to the cone $\varphi_{0,1}^*(\Nef(X_1))$. Thus, if $\Pi$ is the convex cone generated by the classes  $H^0_3$, $\varphi_{0,1}^*H^1_0$, $H^0_2$ and $\varphi_{0,3}^*H^3_0$, then 
\[\Pi \cup \psi_{1,2}^* \Pi  = \NefE(X_0) \cup \bigcup_{i \in \{1,2,3\}}  \varphi_{0,i}^* \NefE(X_i)\]
coincides with the hexagonal cone from Example \ref{examp_3foldn2m3}  and we obtain 
 \[\MovE(X_0) = \bigcup_{g\in \Bir(X)} g^*\Pi\]
 exactly as in the proof of Proposition \ref{prop_FundamentalDom}. Consequently, the Kawamata-Morrison cone conjecture is also satisfied in this non-general case.  Note that Lemma 7.6 up to Corollary 7.9 from \cite{Wang21} still hold with the only exception that $\varphi_{1,2}\colon X_1 \rightarrow X_2$ is an isomorphism and the projections $\pi^1_{1,2}$ and $\pi^2_{1,2}$ are divisorial.

\begin{figure}[hbt]
  \begin{center}
  \hfill
  \begin{minipage}[c]{.47\textwidth}
  \includegraphics[scale=.35]{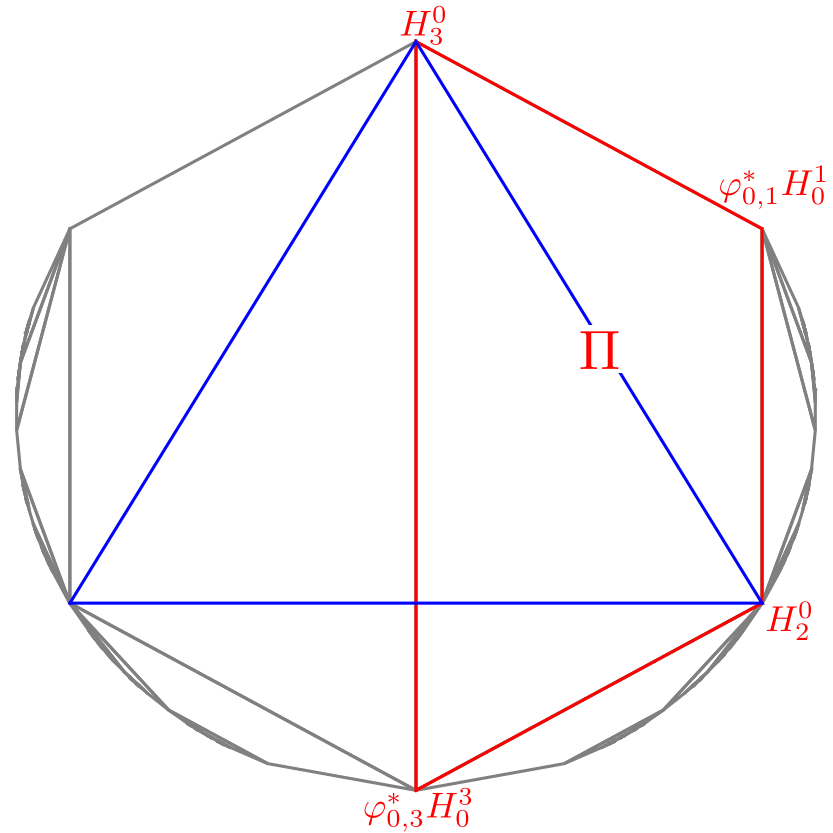}
  \end{minipage} \hfill \hfill \begin{minipage}[c]{.45\textwidth}
  \includegraphics[scale=.33]{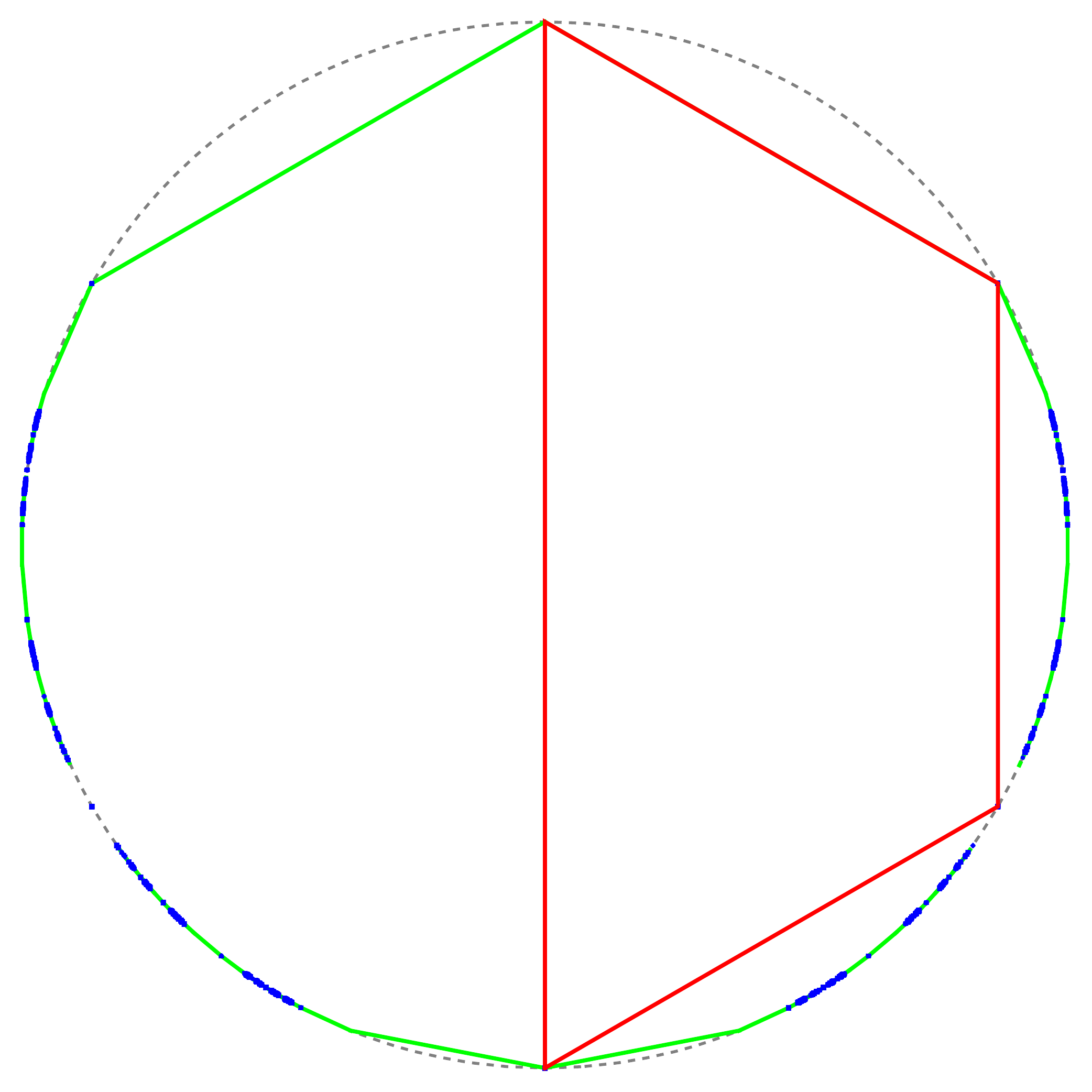}
  \end{minipage}
  \hfill
  \end{center}
  \caption{
  Projective view of the movable cone of $X$ in the case of $m = 3$ and $n = 2$ with a non-trivial automorphism. 
  On the left, the cone obtained as the union of the images of the fundamental domain, in red, under the action of $\Bir(X)$. The nef cone of $X$ is drawn in blue. On the right, the boundary of the $\MovC(X)$. In red, the fundamental domain; in green, the orbit under $\Bir(X)$ of the faces spanned by $\{H^0_3, \varphi_{0,1}^* H^1_0\}$ and $\{H^0_3, \varphi_{0,2}^* H^2_0\}$ corresponding to divisorial contractions of $X_1$ and $X_2$, respectively; and in blue the $\Bir(X)$-orbits of the vertices of the faces of the fundamental domain that correspond to flops.
  }
  \label{figureMovableAut}
\end{figure}
\begin{remark}
	Note that $X_1$ and $X_2$ are isomorphic as minimal models of $X_0$ but not as marked minimal models $\varphi_{0,1}\colon  X_0 \dashrightarrow X_1$ and $\varphi_{0,2}\colon  X_0 \dashrightarrow X_2$. Thus pulling the respective nef cones back to $X_0$, we obtain two different chambers of $\MovC(X)$ whose interiors do not intersect. 
\end{remark}

The question is how the pseudoeffective cone $\EffC(X)$ looks like in this example. A first natural guess is that $\EffC(X)$ is bounded by the quadratic cone
\[ \mathcal{C}_Q := \{t_0t_1 + t_0t_2 + t_1t_2 =0 \} \subset \PP N^1(X)_\RR\] from the  non-symmetric case of Example \ref{examp_3foldn2m3}.  However, using \textit{Macaulay2} (see \cite{macaulay2} and \cite{CYCIM2}), we compute that
\[
h^0(X, D_1) = 1
\]
where $D_1$ is  an effective integral divisor whose numerical class is equivalent to $-2H^0_1 + 2H^0_2 + 6H^0_3$. 
We note that the class of this divisor lies outside of the quadratic cone and is
 the intersection of the two tangent lines of the quadratic cone in $\PP N^1(X)_\RR$ to $H^0_3$ and $\varphi_{0,1}^*(H^1_0)$.

 Note that any segment connecting $[D_1]$ to a rational point of the quadratic cone is contained in $\EffC(X)$. As the example is symmetric with respect to changing the first and the second coordinate, we obtain an effective integral divisor $D_2$ whose class is  
 $2H^0_1 - 2H^0_2 + 6H^0_3.$

 We expect that the pseudoeffective cone consists of the quadratic cone $\mathcal{C}_Q$ together with the $\Bir(X)$-orbit of the cones with vertices $\left[ D_1 \right]$ and $\left[ D_2 \right ]$, respectively glued tangentially to the cone as displayed in Figure \ref{figurePsef}.

\begin{figure}[h]
	\begin{overpic}[scale=.5]{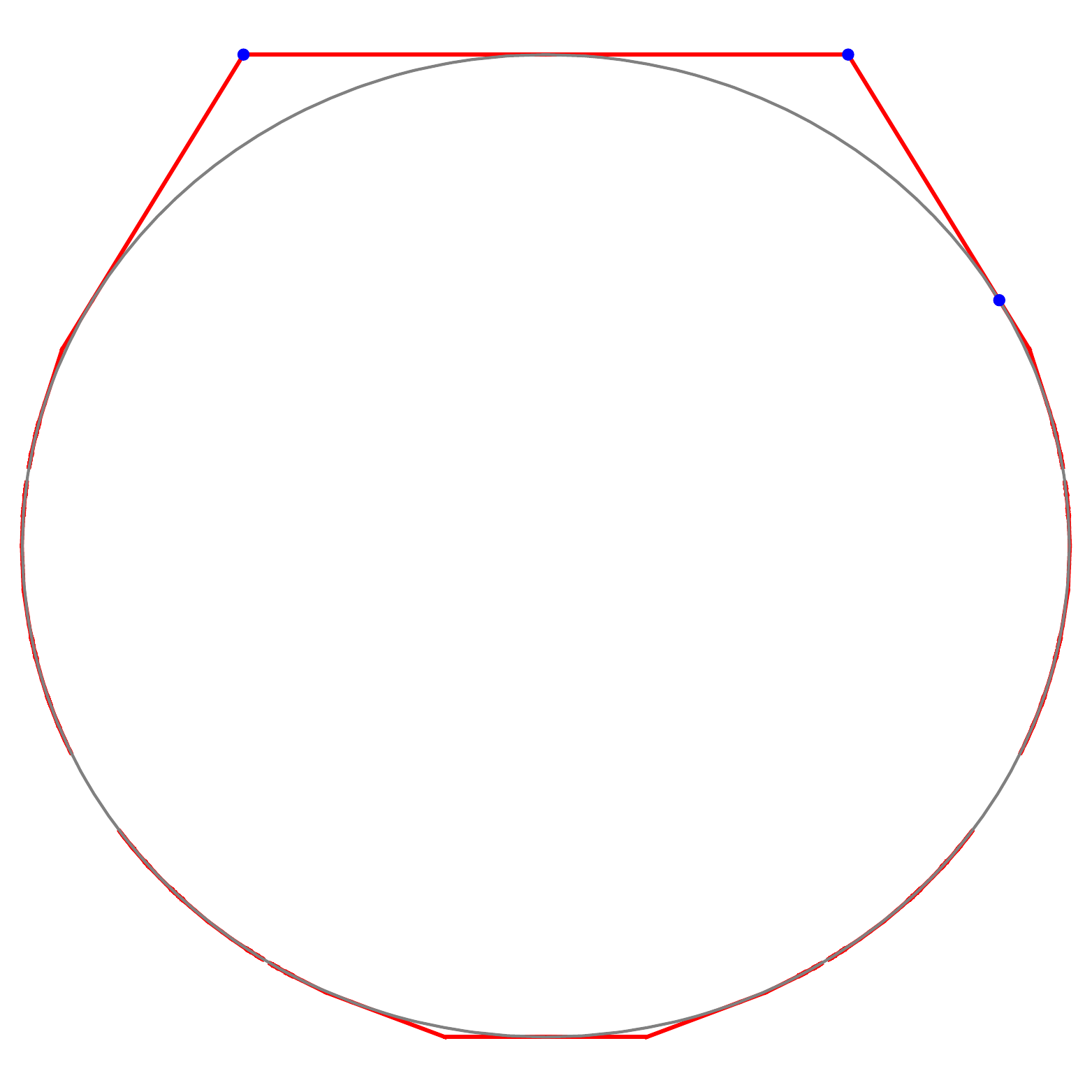}
 \put (80,95) {$[D_1]$}
 \put (9,95) {$[D_2]$}
  \put (93,73.5) {$\varphi_{0,1}^*H_0^1$}
  
	\end{overpic}
	\caption{The expected pseudoeffective cone of $X$: the quadratic cone $\mathcal{C}_Q$ together with the $\Bir(X)$-orbit (drawn in red) of the cone with vertex $\left[ D_1 \right]$ glued tangentially to the quadratic cone $\mathcal{C}_Q$.
	}
	\label{figurePsef}
\end{figure}
We close this example with a discussion on the numerical dimension of the pseudoeffective divisor $D_1$ (resp. $D_2$). 
\begin{definition}{(\cite[\S 5]{Nak04})}\label{defNumericalDimension}
	The numerical dimension $\kappa_\sigma(X,D)$ is the largest integer $k$ such that for some ample divisor $A$,  one has 
	$$
	\limsup_{m\to \infty} \frac{h^0(X,\lfloor mD\rfloor + A)}{m^k} > 0.
	$$
	If no such $k$ exists, we take $\kappa_\sigma(X,D) = -\infty$.
\end{definition}
Taking the supremum over all real numbers $k$, Lesieutre introduced the notion
$\kappa_\sigma^\RR$ and gave an example of a divisor $D$ on the boundary of the pseudoeffective cone of a Calabi-Yau threefold $X$ with $\kappa_\sigma(X,D) \neq \kappa_\sigma^\RR(X,D)$ (see \cite{L21}). 

We show that $D_1$ and $D_2$, along with the tangents to $C_Q$, are on the boundary of the pseudoeffective cone. In particular, this implies that the $\Bir(X)$-orbit of these segments are also on the boundary of $\EffC(X)$. 
\begin{proposition}
 
The boundary of the pseudoeffective cone $\EffC(X)$ contains the $\Bir(X)$-orbit of the line segment spanned by the divisor classes $[D_1]$ and $[D_2]$, and the segment spanned by the divisor classes $[D_1]$ and $\varphi_{0,1}^*H_0^1$.
\label{prop_numDim}
\end{proposition}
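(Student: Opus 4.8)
The plan is to read off the boundary of $\EffC(X)$ from the duality between pseudoeffective divisors and movable curves. By the theorem of Boucksom--Demailly--P\u{a}un--Peternell, for the smooth Calabi--Yau threefold $X$ the dual of $\EffC(X)$ is the closed cone $\overline{\operatorname{Mov}}_1(X)$ of movable curve classes. As $\EffC(X)$ is full-dimensional, $\overline{\operatorname{Mov}}_1(X)$ is salient, so a class $D\in\EffC(X)$ is big (i.e. interior) if and only if $D\cdot\gamma>0$ for every nonzero $\gamma\in\overline{\operatorname{Mov}}_1(X)$. Consequently a pseudoeffective class lies on $\partial\EffC(X)$ as soon as it is orthogonal to some nonzero movable curve class. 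To prove the proposition it thus suffices, for each of the two segments, to exhibit a single movable curve class annihilating both of its endpoints: every point of the segment then lies in $\EffC(X)$ (by convexity, since the endpoints $D_1,D_2,\varphi_{0,1}^*H^1_0$ are all effective) and in the supporting hyperplane cut out by that class.

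The two curve classes arise from the two relevant contractions. The base-point-free system $|H^0_3|$ realizes the projection $p_3\colon X\to\PP^2_3$, whose general fibre is a curve sweeping out $X$; its class $\gamma_3$ is therefore movable. Similarly, on the minimal model $X_1\subset\PP^2_0\times\PP^2_2\times\PP^2_3$ the system $|H^1_0|$ gives the projection $X_1\to\PP^2_0$ with movable fibre class $\gamma'$. Since $\varphi_{0,1}$ is a flop, hence an isomorphism in codimension one, the strict transforms of these general fibres still cover $X$ and define a movable class $\gamma_0$ on $X$ with $(\varphi_{0,1})_*\gamma_0=\gamma'$.

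A direct intersection computation in $\PP^2_1\times\PP^2_2\times\PP^2_3$ yields $\gamma_3\cdot H^0_1=\gamma_3\cdot H^0_2=3$ and $\gamma_3\cdot H^0_3=0$, whence $\gamma_3\cdot D_1=\gamma_3\cdot D_2=0$; in fact the plane $\{\gamma_3=0\}$ contains $H^0_3,[D_1],[D_2]$ and is exactly the tangent line to $\mathcal{C}_Q$ at $H^0_3$. Hence the segment $[D_1][D_2]$ lies in $\EffC(X)\cap\{\gamma_3=0\}\subseteq\partial\EffC(X)$. For the $\Bir(X)$-orbit we invoke the adjunction $g^*D\cdot C=D\cdot g_*C$: for $g\in\Bir(X)$ the class $(g^{-1})_*\gamma_3$ is again movable (as $g$ is an isomorphism in codimension one) and annihilates $g^*D_1$ and $g^*D_2$, so the translated segment $g^*[D_1][D_2]$ also lies on $\partial\EffC(X)$. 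Transporting $\gamma'$ through $\varphi_{0,1}$ (its action on curves is the transpose $t_1^{T}$ of the matrix $t_1$ representing $\varphi_{0,1}^*$) gives $\gamma_0\cdot H^0_1=12$, $\gamma_0\cdot H^0_2=\gamma_0\cdot H^0_3=3$, so that $\gamma_0\cdot D_1=0$ and $\gamma_0\cdot\varphi_{0,1}^*H^1_0=0$; therefore the segment $[D_1]\,\varphi_{0,1}^*H^1_0$ lies in $\EffC(X)\cap\{\gamma_0=0\}\subseteq\partial\EffC(X)$.

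The one point requiring care is the movability of the two classes, since this is precisely what promotes the numerical identities $\gamma_3\cdot D_i=0$ and $\gamma_0\cdot\varphi_{0,1}^*H^1_0=0$ to statements about $\partial\EffC(X)$ via the duality above. For $\gamma_3$ this is immediate from the fibration $p_3$. For $\gamma_0$ one must verify that a general fibre of $X_1\to\PP^2_0$ avoids the codimension $\ge 2$ flopping locus of $\varphi_{0,1}$, so that its strict transform is a genuine curve covering a dense subset of $X$ and its class is movable on $X$. Once movability is secured, the intersection numbers are routine and the conclusion follows at once.
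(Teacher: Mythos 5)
Your proof is correct, and it takes a genuinely different route from the paper's. The paper handles the two segments by separate arguments: for the segment $[D_1][D_2]$ it notes that $D_1+D_2$ is proportional to $H^0_3$ and that $\kappa_\sigma(X,\alpha_1 D_1+\alpha_2 D_2)$ is independent of $\alpha_1,\alpha_2>0$, so every class on that segment has $\kappa_\sigma=2<3=\dim X$ and hence is not big; for the segment joining $[D_1]$ to $\varphi_{0,1}^*H^1_0$ it argues by contradiction, using the tangency of that segment to $\mathcal{C}_Q$ to show that a hypothetical big class $D-\varepsilon A$ would force $\varphi_{0,1}^*H^1_0$ itself to be big (a convexity argument passing through the interior of $\mathcal{C}_Q$). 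You instead use a single uniform mechanism: duality with movable curve classes. Your two classes check out ($\gamma_3$ with intersection vector $(3,3,0)$ against $(H^0_1,H^0_2,H^0_3)$ annihilates $D_1$ and $D_2$, and its hyperplane $t_0+t_1=0$ is indeed the tangent to $\mathcal{C}_Q$ at $H^0_3$; $\gamma_0=t_1^T\gamma'$ with vector $(12,3,3)$ annihilates $D_1$ and $\varphi_{0,1}^*H^1_0$), and you correctly flag the one delicate point, namely that a general fibre of $X_1\to\PP^2_0$ avoids the codimension-two flopping locus of $\varphi_{0,1}$, so that $\gamma_0$ is genuinely movable on $X$ and the projection formula applies. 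What each approach buys: yours is uniform, exhibits the supporting hyperplanes of $\EffC(X)$ explicitly, and treats the $\Bir(X)$-orbit transparently by pushing curve classes forward; the paper's computation for $[D_1][D_2]$ avoids appealing to Boucksom--Demailly--P\u{a}un--Peternell and yields the extra information that $\kappa_\sigma=2$ along that segment, which feeds the surrounding discussion of numerical dimensions. Two small remarks: full BDPP duality is more than you need---the elementary fact that a big class, being ample plus effective, pairs strictly positively with the class of a covering family of curves already suffices; and you spell out the orbit argument only for $[D_1][D_2]$, whereas the proposition asserts it for both segments---the identical pushforward argument applied to $\gamma_0$ covers the second one.
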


\begin{proof}
Note that any integral boundary divisor of $\MovC(X)\cap \partial \mathcal{C}_Q$ is not big. Indeed its numerical dimension $\kappa_\sigma$ is $2$ since its linear system corresponds to a projection of $X$ to $\PP^2$.  In particular,  the divisor $H^0_3$ is not big. 
By the properties of $\kappa_\sigma$ we know that 
\[
\kappa_\sigma(X,\alpha_1D_1 +\alpha_2D_2 )
\]
is independent of the positive numbers $\alpha_1,\alpha_2$. Hence, the numerical dimension is constant to $2= \kappa_\sigma(X,H^0_3)$ in this line segment which shows the claim.

Now consider a divisor class $[D]$ on the segment spanned by $[D_1]$ and $\varphi_{0,1}^*H_0^1$, and assume it is not on the boundary of $\EffC(X)$. Therefore, for an ample divisor $A$ and $0 <\varepsilon \ll 1$ we have that $D - \varepsilon A$ is big. Consider the line $L$ connecting $D - \epsilon A$ and $\varphi_{0,1}^*H_0^1$. Because the segment spanned by $[D_1]$ and $\varphi_{0,1}^*H_0^1$ is tangent to $C_Q$, there exists a divisor class $E$ on the line $L$ in the interior of $C_Q$ such that $E$ is big. This implies that the divisor class $\varphi_{0,1}^*H_0^1$ is also big, which is a contradiction. Therefore, the segment spanned by $[D_1]$ and $\varphi_{0,1}^*H_0^1$ is on the boundary of $\EffC(X)$.
\end{proof}

\begin{remark}
By \cite[\S 5, Proposition 2.7]{Nak04} we have 
\[0  \leq \kappa_\sigma(X,D_1)= \kappa_\sigma(X,D_2) \leq \kappa_\sigma(D_1+D_2) = 2.
\]  
\end{remark}

On Example \ref{examp_3foldn2m3} there are three types of divisors on the boundary of $\MovC(X)$: rational points, eigenvectors of elements of $\Bir(X)$, and accumulation points of eigenvectors. The numerical dimension of the first two is well understood. If the divisor class corresponds to a rational point on the boundary, then it can be pulled back birationally to a rational nef divisor on some minimal model $X_i$, and hence it has an integral numerical dimension $\kappa_\sigma^\RR = \kappa_\sigma$.

In the case of an eigenvector of an element $f \in \Bir(X)$, one can perform a similar computation as done in \cite{L21} and \cite{HS22} by restricting to the two-dimensional cone generated by the eigenvectors associated to the unique eigenvalue $\lambda > 1$ of $f$ and $f^{-1}$. In this case we also obtain that $\kappa_\sigma^\RR = 3/2$.

The third case, corresponding to accumulation points, is not well understood and could be a source of interesting numerical dimensions.

Example \ref{example_nontrivialaut} provides a new interesting divisor class, specifically $D_1$ and its $\Bir(X)$-orbit. Computing its numerical dimension requires a different method than the ones described above, and given that $D_1$ is an integral divisor class, it could be another potential source for an interesting behavior of the numerical dimension $\kappa_\sigma^\RR$.

\begin{question} What is $\kappa_\sigma ^\RR(X,D_1)$ and does $\kappa_\sigma ^\RR(X,D_1) = \kappa_\sigma (X,D_1)$ hold?
\label{question_numDim}
\end{question}	

\begin{example}[$m=2$, $n=4$, $\Aut(X)$ non-trivial]\label{examp_trivialActing}
As the last example we present a Calabi-Yau threefold $X_0 \subset \PP^4_1 \times \PP^4_2$ with $\rho(X_0)=2$ which has a non-trivial automorphism group $\Aut(X_0)$ acting trivially on $N^1(X_0)_\RR$. For the sake of simplicity, we denote by $x_i$ the variables of $\PP_0^4$, by $y_j$ the ones of $\PP_1^4$ and by 
$z_k$ the ones of $\PP_2^4$. Let us consider the element $  g \in \Aut(\PP^4_0 \times \PP^4_1 \times \PP^4_2)$ given by
\[
g  =\big( \begin{pmatrix}
-1 & & & &  \\  & -1& & &  \\ & & 1& & \\ & & & 1 &  \\ & & &  & 1
\end{pmatrix} , 
\begin{pmatrix}
-1 & & & &  \\  & -1& & &  \\ & & 1& & \\ & & & 1 &  \\ & & &  & 1
\end{pmatrix},
\begin{pmatrix}
1 & & & &  \\  & 1& & &  \\ & & 1& & \\ & & & -1 &  \\ & & &  & -1
\end{pmatrix}
\big). 
\] 
A basis of all $(1,1,1)$-forms invariant under $g$ is given by 63 monomials $x_iy_jz_k$ with 
\begin{center}
\begin{tabular}{c|c|c| c}
	 $i$  & $j$ & $k$ & number\\ \hline \hline
	$0,1$ &  $0,1$ & $0,1,2$ & $2\cdot2\cdot 3 = 12 $ \\ \hline
		$0,1$ &  $2,3,4$ & $3,4$ & $12$ \\ \hline
		$2,3,4$ &  $0,1$ & $3,4$ & $12 $\\ \hline
		$2,3,4$ &  $2,3,4$ & $0,1,2$ & $27$ \\ \hline \hline
		& & & $63$
		\end{tabular}.
\end{center}
Taking any combination of these monomials, we obtain an invariant $(1,1,1)$-form $F$ and by differentiating with respect to the $x_i$ we obtain a variety $X_0 \subset \PP^4_1 \times \PP^4_2$ which is invariant under the induced action on $\PP^4_1 \times \PP^4_2$.  We verified computationally with $\textit{Macaulay2}$ (see \cite{CYCIM2}) that a random combination of the basis elements yields a non-singular complete intersection Calabi-Yau threefold $X_0$ with minimal models $X_1,X_2$ and $X_3$ which are also non-singular. 

The space of all possible invariant forms $F$ is of dimension at most
\[ 63 + 3\cdot \dim(\PGL_5(\CC)) - 1 = 134 \] so we do not get a useful dimension bound as in Proposition \ref{prop_AutTrivial} because the 
(projective) space of all possible $(1,1,1)$-forms $F$ is of dimension $124$. 
\end{example}
\bibliographystyle{amsalpha}

\begin{thebibliography}{{Kaw}08}
	
	\bibitem[CL17]{ChenLabbe17}
	Hao Chen and Jean-Philippe Labb\'{e}, \emph{Limit directions for {L}orentzian
		{C}oxeter systems}, Groups Geom. Dyn. \textbf{11} (2017), no.~2, 469--498.
	\MR{3668048}
	
	\bibitem[CO15]{CO15}
	Serge {Cantat} and Keiji {Oguiso}, \emph{{Birational automorphism groups and
			the movable cone theorem for Calabi-Yau manifolds of Wehler type via
			universal Coxeter groups}}, {Am. J. Math.} \textbf{137} (2015), no.~4,
	1013--1044 (English).
	
	\bibitem[GS21]{macaulay2}
	D.~R. Grayson and M.~E. Stillman, \emph{{\sc Macaulay2} --- {A} software system
		for research in algebraic geometry (version 1.18)}, home page:
	\url{http://www.math.uiuc.edu/Macaulay2/}, 2021.
	
	\bibitem[HS22]{HS22}
	Michael Hoff and Isabel Stenger, \emph{{On the Numerical Dimension of
			Calabi–Yau 3-Folds of Picard Number 2}}, Int. Math. Res. Not.,
	electronically published on June 02, 2022,
	\url{https://doi.org/10.1093/imrn/rnac148} (2022).
	
	\bibitem[HSY22]{CYCIM2}
	Michael Hoff, Isabel Stenger, and José~Ignacio Yáñez, \emph{{Ancillary
			M2-file to "Movable cones of complete intersections of multidegree one on
			products of projective spaces"}}, source code available at
	\url{https://www.math.uni-sb.de/ag/lazic/stenger/index.html}, 2022.
	
	\bibitem[HT18]{HT18}
	Shinobu {Hosono} and Hiromichi {Takagi}, \emph{{Movable vs monodromy nilpotent
			cones of Calabi-Yau manifolds}}, {SIGMA, Symmetry Integrability Geom. Methods
		Appl.} \textbf{14} (2018), paper 039, 37 (English).
	
	\bibitem[Hum90]{Humphreys90}
	James~E. Humphreys, \emph{Reflection groups and {C}oxeter groups}, Cambridge
	Studies in Advanced Mathematics, vol.~29, Cambridge University Press,
	Cambridge, 1990. \MR{1066460}
	
	\bibitem[ILW22]{ILW22}
	Atsushi Ito, Ching-Jui Lai, and Sz-Sheng Wang, \emph{The movable cone of
		{C}alabi--{Y}au threefolds in ruled {F}ano manifolds}, preprint:
	\url{https://arxiv.org/abs/2205.15756}, 2022.
	
	\bibitem[{Kaw}97]{Kaw97}
	Yujiro {Kawamata}, \emph{{On the cone of divisors of Calabi-Yau fiber spaces}},
	{Int. J. Math.} \textbf{8} (1997), no.~5, 665--687 (English).
	
	\bibitem[{Kaw}08]{Kaw08}
	\bysame, \emph{{Flops connect minimal models}}, {Publ. Res. Inst. Math. Sci.}
	\textbf{44} (2008), no.~2, 419--423 (English).
	
	\bibitem[Les22]{L21}
	John Lesieutre, \emph{Notions of numerical {Iitaka} dimension do not coincide},
	J. Algebr. Geom. \textbf{31} (2022), no.~1, 113--126 (English).
	
	\bibitem[{Loo}14]{Loo14}
	Eduard {Looijenga}, \emph{{Discrete automorphism groups of convex cones of
			finite type}}, {Compos. Math.} \textbf{150} (2014), no.~11, 1939--1962
	(English).
	
	\bibitem[LOP20]{LOP20}
	Vladimir {Lazi\'c}, Keiji {Oguiso}, and Thomas {Peternell}, \emph{{Nef line
			bundles on Calabi-Yau threefolds. I}}, {Int. Math. Res. Not.} \textbf{2020}
	(2020), no.~19, 6070--6119 (English).
	
	\bibitem[LP13]{LP13}
	Vladimir {Lazi\'c} and Thomas {Peternell}, \emph{{On the cone conjecture for
			Calabi-Yau manifolds with Picard number two}}, {Math. Res. Lett.} \textbf{20}
	(2013), no.~6, 1103--1113 (English).
	
	\bibitem[LW22]{LW21}
	Ching-Jui {Lai} and Sz-Sheng {Wang}, \emph{{The movable cone of certain
			{C}alabi-{Y}au threefolds of Picard number two}}, {J. Pure Appl. Algebra}
	\textbf{226} (2022), no.~2, 40 (English), Id/No 106841.
	
	\bibitem[Mor93]{Mor93}
	David~R. Morrison, \emph{Compactifications of moduli spaces inspired by mirror
		symmetry}, Journ\'ees de g\'eom\'etrie alg\'ebrique d'Orsay - Juillet 1992,
	Ast\'erisque, no. 218, Soci\'et\'e math\'ematique de France, 1993 (en).
	
	\bibitem[{Nak}04]{Nak04}
	Noboru {Nakayama}, \emph{{Zariski-decomposition and abundance}}, vol.~14,
	Tokyo: Mathematical Society of Japan, 2004 (English).
	
	\bibitem[{Ogu}14]{Og14}
	Keiji {Oguiso}, \emph{{Automorphism groups of Calabi-Yau manifolds of Picard
			number 2}}, {J. Algebr. Geom.} \textbf{23} (2014), no.~4, 775--795 (English).
	
	\bibitem[PS12]{Pen12}
	Artie Prendergast-Smith, \emph{The cone conjecture for abelian varieties}, J.
	Math. Sci. Univ. Tokyo \textbf{19} (2012), no.~2, 243--261. \MR{2987307}
	
	\bibitem[Tot10]{Tot10}
	Burt Totaro, \emph{The cone conjecture for {C}alabi-{Y}au pairs in dimension
		2}, Duke Math. J. \textbf{154} (2010), no.~2, 241--263. \MR{2682184}
	
	\bibitem[Vin71]{Vinberg71}
	\`E.~B. Vinberg, \emph{Discrete linear groups that are generated by
		reflections}, Izv. Akad. Nauk SSSR Ser. Mat. \textbf{35} (1971), 1072--1112.
	\MR{0302779}
	
	\bibitem[Wan22]{Wang21}
	Long Wang, \emph{Remarks on nef and movable cones of hypersurfaces in {M}ori
		dream spaces}, Journal of Pure and Applied Algebra \textbf{226} (2022),
	no.~11, 107101.
	
	\bibitem[{Wol}]{Mathematica}
	{Wolfram Research, Inc.}, \emph{Mathematica, {V}ersion 13.0.1}, Champaign, IL,
	2021.
	
	\bibitem[Yá22]{Y21}
	José~Ignacio Yáñez, \emph{Birational automorphism groups and the movable
		cone theorem for {C}alabi–{Y}au complete intersections of products of
		projective spaces}, Journal of Pure and Applied Algebra \textbf{226} (2022),
	no.~10, 107093.
	
\end{thebibliography}

\end{document}